\newtheorem{theorem}{Theorem}[section]
\newtheorem{lemma}[theorem]{Lemma}
\newtheorem{proposition}[theorem]{Proposition}
\newtheorem{corollary}[theorem]{Corollary}
\theoremstyle{definition}
\newtheorem{definition}[theorem]{Definition}
\newtheorem{remark}[theorem]{Remark}
\newcommand{\func}[1]{\operatorname{#1}}
\def\bal{\boldsymbol{\mathit{ba}\ell}}
\def\ubal{\boldsymbol{\mathit{uba}\ell}}
\def\cubal{\boldsymbol{\mathit{cuba}\ell}}
\def\mbal{\boldsymbol{\mathit{mba}\ell}}
\def\mubal{\boldsymbol{\mathit{muba}\ell}}
\def\mcubal{\boldsymbol{\mathit{mcuba}\ell}}
\def\KHaus{{\sf{KHaus}}}
\def\Stone{{\sf{Stone}}}
\def\KHF{\sf{KHF}}
\def\DFr{{\sf{DF}}}
\def\ba{{\sf{BA}}}
\def\ma{{\sf{MA}}}
\def\Alg{{\sf{Alg}}}
\def\Algu{{\sf{Alg}}^u}
\def\Coalg{{\sf{Coalg}}}
\def\FunctorFont{\mathcal}
\def\H{{\FunctorFont{H}}}
\def\Hu{{\FunctorFont{H}}^u}
\def\Hc{{\FunctorFont{H}}^c}
\def\V{{\FunctorFont{V}}}
\def\A{{\FunctorFont{A}}}
\def\B{{\FunctorFont{B}}}
\def\C{{\FunctorFont{C}}}
\def\Y{{\FunctorFont{Y}}}
\def\F{{\FunctorFont{F}}}
\def\G{{\FunctorFont{G}}}
\def\M{{\FunctorFont{M}}}
\def\N{{\FunctorFont{N}}}
\def\SS{{\FunctorFont{S}}}
\def\T{{\FunctorFont{T}}}
\newcommand{\meet}{\land}
\title[The Vietoris functor and modal operators on $C(X)$]{The Vietoris functor and modal operators on rings of continuous functions}
\author{G.~Bezhanishvili, L.~Carai, P.~J.~Morandi}
\date{}
\address{Department of Mathematical Sciences, New Mexico State University, Las Cruces NM 88003 USA}
\email{guram@nmsu.edu, lcarai@nmsu.edu, pmorandi@nmsu.edu}
\subjclass[2010]{54B20; 54D30; 54C30; 06F25; 13J25; 06E15; 06E25; 03B45}
\keywords{Compact Hausdorff space, continuous relation, coalgebra, bounded archimedean $\ell$-algebra, uniform completeness, Gelfand duality, Stone space, modal algebra, J\'onsson-Tarski duality}
\begin{document}

\begin{abstract}
We introduce an endofunctor $\H$ on the category $\bal$ of bounded archimedean $\ell$-algebras and show that there is a dual adjunction between the category $\Alg(\H)$ of algebras for $\H$ and the category $\Coalg(\V)$ of coalgebras for the Vietoris endofunctor $\V$ on the category of compact Hausdorff spaces. We also introduce an endofunctor $\Hu$ on the reflective subcategory of $\bal$ consisting of uniformly complete objects of $\bal$ and show that Gelfand duality lifts to a dual equivalence between $\Alg(\Hu)$ and $\Coalg(\V)$. On the one hand, this generalizes a result of \cite{Abr88,KKV04} for the category of coalgebras of the Vietoris endofunctor on the category of Stone spaces. On the other hand, it yields an alternate proof of a recent result of \cite{BCM20a}.
\end{abstract}

\maketitle

\section{Introduction} \label{sec: intro}

It is a well-known result in modal logic that the category $\ma$ of modal algebras is dually equivalent to the category $\DFr$ of descriptive frames. This result has its origins in the work of J\'onsson and Tarski \cite{JT51}, which is why it is sometimes referred to as J\'onsson-Tarski duality.  
In its present form it was established by Esakia \cite{Esa74} and Goldblatt \cite{Gol76} (but see also Halmos \cite{Hal56}). 

A descriptive frame is a Stone space (compact Hausdorff zero-dimensional space) $X$ equipped with a binary relation $R$ that is continuous, meaning that the corresponding map $\rho_R : X \to \mathcal V(X)$ into the Vietoris space of $X$, given by
\[
\rho_R(x)=R[x]=\{ y\mid xRy \},
\]
is a well-defined continuous map. In fact, $\DFr$ is isomorphic to the category $\Coalg(\V)$ of coalgebras for the Vietoris endofunctor $\V$ on the category $\Stone$ of Stone spaces. Abramsky~\cite{Abr88} and Kupke, Kurz, and Venema \cite{KKV04} defined the dual endofunctor $\H$ on the category $\ba$ of boolean algebras. They showed that the category $\Alg(\H)$ of algebras for $\H$ is isomorphic to $\ma$, and proved that Stone duality between $\ba$ and $\Stone$ lifts to a dual equivalence between $\Alg(\H)$ and $\Coalg(\V)$. This yields an elegant new proof of the Esakia-Goldblatt theorem that $\ma$ is dually equivalent to $\DFr$.

 Let $\KHaus$ be the category of compact Hausdorff spaces and continuous maps. Then $\Stone$ is a full subcategory of $\KHaus$. There are several generalizations of Stone duality to $\KHaus$. To outline one such generalization, we point out that in Stone duality we work with the boolean algebra of clopens, which correspond to continuous characteristic functions. Since an arbitrary compact Hausdorff space $X$ does not have enough clopens, it is natural to work instead with the ring $C(X)$ of all continuous real-valued functions. This gives rise to the celebrated Gelfand duality between $\KHaus$ and the category of bounded archimedean $\ell$-algebras that in addition are uniformly complete (see Section~2.1 for details). Up to isomorphism, these are exactly the rings $C(X)$ for $X\in\KHaus$.
  
The Vietoris endofunctor $\V:\Stone\to\Stone$ is the restriction of the Vietoris endofunctor $\V:\KHaus\to\KHaus$. We call a relation $R$ on $X\in\KHaus$ continuous if $\rho_R:X\to\V(X)$ is a well-defined continuous map. The pairs $(X,R)$, where $X\in\KHaus$ and $R$ is a continuous relation on $R$, generalize descriptive frames. Following \cite{BCM20a}, we call such pairs {\em compact Hausdorff frames}, and denote the resulting category by $\KHF$. Then $\DFr$ is a full subcategory of $\KHF$ and the isomorphism between $\DFr$ and the category of coalgebras for $\V:\Stone\to\Stone$ extends to an isomorphism between $\KHF$ and the category of coalgebras for $\V:\KHaus\to\KHaus$. 

Our aim is to generalize the endofunctor $\H:\ba\to\ba$ that is the algebraic counterpart of $\V:\Stone\to\Stone$ to an endofunctor on the category $\bal$ of bounded archimedean $\ell$-algebras so that it is the algebraic counterpart of $\V:\KHaus\to\KHaus$. For this we need to overcome several obstacles. Firstly, the construction of $\H:\ba\to\ba$ utilizes the existence of free boolean algebras. However, as was shown in \cite{BCM20c}, free algebras on sets do not exist in $\bal$. Instead we need to work with free algebras on weighted sets (see Section~2.2). Secondly, since $\KHaus$ is dually equivalent to the reflective subcategory $\ubal$ of $\bal$ consisting of uniformly complete objects in $\bal$, additional care is needed when transitioning from $\bal$ to its subcategory $\ubal$. 

We first construct an endofunctor $\H:\bal\to\bal$ and prove that there is a dual adjunction between $\Alg(\H)$ and $\Coalg(\V)$. We next construct an endofunctor $\Hu:\ubal\to\ubal$ and prove that there is a dual equivalence between $\Alg(\Hu)$ and $\Coalg(\V)$. As a result, we obtain a generalization of  the result of Kupke, Kurz, and Venema \cite{KKV04} from $\Stone$ to $\KHaus$. 

In \cite{BCM20a} we generalized the notion of a modal operator on a boolean algebra to that of a modal operator on $A\in\bal$. This has resulted in the category $\mbal$ of modal bounded archimedean $\ell$-algebras and its reflective subcategory $\mubal$ consisting of uniformly complete objects. The main result of \cite{BCM20a} establishes that there is a dual adjunction between $\mbal$ and $\KHF$, which restricts to a dual equivalence between $\mubal$ and $\KHF$. On the one hand this generalizes Gelfand duality, and on the other hand the duality between $\ma$ and $\DFr$. Here we show that $\Alg(\H)$ is isomorphic to $\mbal$ and that $\Alg(\Hu)$ is isomorphic to $\mubal$. From this the main result of \cite{BCM20a} follows, and we arrive at the following diagram, where the hook arrows are embeddings of categories, the two-sided horizontal arrows are dual equivalences, and the vertical arrows are isomorphisms.
\[
\begin{tikzcd}
\Alg(\H) \arrow[r, hookleftarrow] & \Alg(\Hu) \arrow[r, <->] & \Coalg(\V) \arrow[d, <->] \\
 \mbal \arrow[r, hookleftarrow] \arrow[u, <->] & \mubal  \arrow[u, <->] \arrow[r, <->] & \KHF
\end{tikzcd}
\]

The paper is organized as follows. Section 2 provides the necessary background for the paper, including Gelfand duality, the construction of free objects in $\bal$, and  the definition of modal operators on objects of $\bal$. In Section 3 we introduce the endofunctor $\H$ on $\bal$, and in Section 4 we show that the category of algebras for $\H$ is isomorphic to $\mbal$. We relate $\H$ to the Vietoris functor $\V$ in Section 5 by showing that for $A \in \bal$, the Yosida space of $\H(A)$ is homeomorphic to the Vietoris space of the Yosida space of $A$. We prove our main result in Section 6, establishing a dual adjunction between $\Alg(\H)$ and $\Coalg(\V)$. We then introduce a reflective subcategory $\Algu(\H)$ of $\Alg(\H)$ and show that this dual adjunction restricts to a dual equivalence between $\Algu(\H)$ and $\Coalg(\V)$. We also provide an alternate view of the category $\Algu(\H)$ as $\Alg(\Hu)$. In Section 7 we derive the main result of \cite{BCM20a}, showing that there is a dual adjunction between $\mbal$ and $\KHF$, which restricts to a dual equivalence between $\mubal$ and $\KHF$. In Section 8 we show how the exclusion of the empty set from the construction of the Vietoris space results in the modification of the $\H$ functor to the functor $\H^*$ such that $\Alg(\H^*)$ is isomorphic to the full subcategory $\mbal^{\sf D}$ of those $(A,\Box)$ where $\Box$ corresponds to a serial relation. 
Finally, in Section 9 we relate our results to those of \cite{KKV04}.

\section{Preliminaries} \label{sec: prelim}

In this section we provide the necessary background for the rest of the paper. In \S~\ref{subsec: Gelfand duality} we recall Gelfand duality, in \S~\ref{subsec: free objects} free objects in $\bal$ over weighted sets, and finally in \S~\ref{subsec: modal operators} modal operators on algebras in $\bal$ and a generalization of Gelfand duality to this setting.

\subsection{Gelfand duality} \label{subsec: Gelfand duality}

For basic facts about lattice-ordered rings and algebras we use Birkhoff's book \cite[Ch.~XIII and onwards]{Bir79} as our main reference. All rings we consider are assumed to be commutative and unital.

\begin{definition}
\begin{enumerate}
\item[]
\item A ring $A$ with a partial order $\le$ is a \emph{lattice-ordered ring}, or an \emph{$\ell$-ring} for short, provided
$(A,\le)$ is a lattice, $a\le b$ implies $a+c \le b+c$ for each $c$, and $0 \leq a, b$ implies $0 \le ab$.
\item An $\ell$-ring $A$ is an \emph{$\ell$-algebra} if it is an $\mathbb R$-algebra and for each $0 \le a\in A$ and $0\le r\in\mathbb R$ we have $0 \le r\cdot a$.
\item An $\ell$-ring $A$ is \emph{bounded} if for each $a \in A$ there is $n \in \mathbb{N}$ such that $a \le n\cdot 1$ (that is, $1$ is a \emph{strong order unit}).
\item An $\ell$-ring $A$ is \emph{archimedean} if for each $a,b \in A$, whenever $n\cdot a \le b$ for each $n \in \mathbb{N}$, then $a \le 0$.
\item An \emph{$\ell$-algebra morphism} $\alpha : A \to B$ is both an $\mathbb{R}$-algebra and lattice homomorphism. It is \emph{unital} if $\alpha(1) = 1$.
\item Let $\bal$ be the category of bounded archimedean $\ell$-algebras and unital $\ell$-algebra morphisms.
\end{enumerate}
\end{definition}

Let $A\in\bal$. For $a\in A$, define the \emph{absolute value} of $a$ by
\[
|a|=a\vee(-a).
\]
If we set the positive and negative parts of $a$ to be $a^+ = a\vee 0$ and $a^- = (-a)^+$, then $a = a^+ - a^-$ and $|a| = a^+ + a^-$.  

The \emph{norm} of $a$ is defined by
\[
||a||=\inf\{r\in\mathbb R \mid |a|\le r\cdot 1\}.
\]
If $X \in \KHaus$, then $C(X) \in \bal$ and the definition of the norm of $f \in C(X)$ coincides with the usual definition
\[
\|f\| = \sup\{ |f(x)| \mid x \in X\}. 
\]

If $\alpha : A \to B$ is a $\bal$-morphism, it is easy to see that $\|\alpha(a)\| \le \|a\|$, $\alpha(a^+) = \alpha(a)^+$, and $\alpha(r) = r$ for all $r \in \mathbb{R}$, where we identify $r$ with $r \cdot 1$.

\begin{definition}
We call $A\in\bal$ \emph{uniformly complete} if its norm is complete. Let $\ubal$ be the full subcategory of $\bal$ consisting of uniformly complete objects of $\bal$.
\end{definition}

\begin{theorem} [{Gelfand duality}] \label{thm: Gelfand}
There is a dual adjunction between $\bal$ and $\KHaus$ which restricts to a dual equivalence between $\ubal$ and $\KHaus$.
\end{theorem}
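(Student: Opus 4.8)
The plan is to construct the two functors and the natural transformations witnessing the adjunction, then identify where the restriction becomes an equivalence. For the contravariant functor $C : \KHaus \to \bal$, send $X$ to the $\ell$-algebra $C(X)$ of continuous real-valued functions (which lies in $\bal$ since $X$ is compact, so every function is bounded, and pointwise order makes it archimedean), and send a continuous map $\varphi : X \to Y$ to the unital $\ell$-algebra morphism $C(\varphi) : C(Y) \to C(X)$ given by $f \mapsto f \circ \varphi$. In the other direction, I would define the \emph{Yosida functor} $Y : \bal \to \KHaus$ sending $A \in \bal$ to its space $Y_A$ of maximal $\ell$-ideals of $A$ with the hull-kernel topology; this is compact Hausdorff because $1$ is a strong order unit and $A$ is archimedean. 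On morphisms, $\alpha : A \to B$ induces $Y_\alpha : Y_B \to Y_A$ by contraction of maximal $\ell$-ideals, which one checks is well-defined and continuous.

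Next I would produce the unit and counit. For each $A \in \bal$ there is the canonical \emph{Yosida representation} $\eta_A : A \to C(Y_A)$, sending $a$ to the function $M \mapsto$ (the real number to which $a$ is congruent mod $M$), using that $A/M \cong \mathbb{R}$ for every maximal $\ell$-ideal $M$ (again by boundedness and the archimedean property). One shows $\eta_A$ is an injective $\bal$-morphism with dense, uniformly closed-generating image, and that $\eta$ is natural. For each $X \in \KHaus$ there is the evaluation map $\epsilon_X : X \to Y_{C(X)}$ sending $x$ to the maximal ideal $\{f : f(x) = 0\}$; by the Gelfand–Kolmogorov theorem $\epsilon_X$ is a homeomorphism, and it is natural in $X$. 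The triangle identities then exhibit $C \dashv Y$ as a dual adjunction between $\bal$ and $\KHaus$ (equivalently, an adjunction between $\bal$ and $\KHaus^{\mathrm{op}}$).

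Finally, to get the dual equivalence I restrict to $\ubal$. Since $\epsilon_X$ is always an isomorphism, the counit is already invertible; it remains to show that $\eta_A$ is an isomorphism exactly when $A$ is uniformly complete. The image $\eta_A(A)$ is a subalgebra of $C(Y_A)$ that separates points (distinct maximal ideals are separated by some $a$) and contains the constants, so by the Stone–Weierstrass theorem it is uniformly dense in $C(Y_A)$; moreover $\eta_A$ is a norm-preserving embedding. Hence if $A$ is uniformly complete then $\eta_A(A)$ is a complete, dense subalgebra of the complete space $C(Y_A)$, forcing $\eta_A(A) = C(Y_A)$; conversely $C(Y_A)$ is always uniformly complete, so $A \cong C(Y_A)$ is uniformly complete. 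Thus $C$ and $Y$ restrict to a dual equivalence between $\ubal$ and $\KHaus$.

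The main obstacle is the analytic input: proving that $\eta_A$ is a norm-preserving embedding with $A/M \cong \mathbb{R}$ for every maximal $\ell$-ideal $M$, and verifying the Stone–Weierstrass density of $\eta_A(A)$ in $C(Y_A)$. These facts hinge essentially on the combination of boundedness (strong order unit) and the archimedean property, and on a careful analysis of the hull-kernel topology on $Y_A$; the categorical bookkeeping (naturality, triangle identities) is then routine. Since this is the classical Gelfand–Yosida–Stone circle of ideas, I would cite the standard development (e.g. via Birkhoff) for the $\ell$-algebra-theoretic lemmas rather than reprove them.
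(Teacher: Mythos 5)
Your proposal is correct and follows essentially the same route as the paper, which likewise constructs the functors $\C$ and $\Y$, uses the evaluation homeomorphism $\varepsilon_X$ and the Yosida representation $\zeta_A$ (your $\eta_A$), and invokes Stone--Weierstrass to identify $\zeta_A$ as the uniform completion, deferring the analytic lemmas to the standard references. The only cosmetic difference is that the paper treats the isometric embedding and density claims as citations to \cite{BMO13a} rather than items to be verified.
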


We briefly describe the functors $\C : \KHaus\to\bal$ and $\Y : \bal\to\KHaus$ establishing the dual adjunction of Theorem~\ref{thm: Gelfand}; for details see \cite[Sec.~3]{BMO13a} and the references therein. For a compact Hausdorff space $X$ let $\C(X) := C(X)$ be the ring of (necessarily bounded) continuous real-valued functions on $X$. For a continuous map $\varphi : X\to Y$ let $\C(\varphi):C(Y)\to C(X)$ be defined by $\C(\varphi)(f)=f\circ\varphi$ for each $f\in C(Y)$. Then $\C:\KHaus\to\bal$ is a well-defined contravariant functor.

For $A \in \bal$, we recall that an ideal $I$ of $A$ is an \emph{$\ell$-ideal} if $|a|\le|b|$ and $b\in I$ imply $a\in I$.
If $A\in\bal$, then we can associate to $A$ a compact Hausdorff space as follows. Let $Y_A$ be the space of maximal $\ell$-ideals of $A$, whose closed sets are exactly sets of the form
\[
Z_\ell(I) = \{M\in Y_A\mid I\subseteq M\},
\]
where $I$ is an $\ell$-ideal of $A$. It follows from the work of Yosida \cite{Yos41} that $Y_A \in \KHaus$. As is customary, we refer to $Y_A$ as the \emph{Yosida space} of $A$ and set $\Y(A)=Y_A$. For a morphism $\alpha$ in $\bal$ we let $\Y(\alpha)=\alpha^{-1}$. Then $\Y:\bal\to\KHaus$ is a well-defined contravariant functor, and the functors $\C$ and $\Y$ yield a dual adjunction between $\bal$ and $\KHaus$.

For $X\in\KHaus$ we have that $\varepsilon_X:X\to Y_{C(X)}$ is a homeomorphism where
\[
\varepsilon_X(x)=\{f\in C(X) \mid f(x)=0\}.
\]

For $A\in\bal$ define $\zeta_A :A\to C(Y_A)$ by $\zeta_A(a)(M)=r$ where $r$ is the unique real number
satisfying $a+M=r+M$. Then $\zeta_A$ is a monomorphism in $\bal$ separating points of $Y_A$. Therefore, by the Stone-Weierstrass theorem, $\zeta_A : A \to C(Y_A)$ is the uniform completion of $A$. Thus, if $A$ is uniformly complete, then $\zeta_A$ is an isomorphism. Consequently, the contravariant adjunction restricts to a dual equivalence between $\ubal$ and $\KHaus$, yielding Gelfand duality. Another consequence of these considerations is the following well-known result.

\begin{proposition}\label{prop: SW}
$\ubal$ is a full reflective subcategory of $\bal$, and the reflector assigns to each $A \in \bal$ its uniform completion $\C\Y(A) = C(Y_A) \in \ubal$.
\end{proposition}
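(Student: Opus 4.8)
The plan is to exhibit $\zeta_A\colon A\to C(Y_A)$ as the unit of a reflection of $\bal$ onto $\ubal$. Three points must be checked: that $C(Y_A)$ lies in $\ubal$, that $\zeta_A$ enjoys the appropriate universal property, and that these assemble into an adjunction with the inclusion functor; fullness of $\ubal$ in $\bal$ holds by definition. For the first point, since $Y_A\in\KHaus$ the norm on $C(Y_A)$ is the sup-norm, and a uniformly Cauchy net of continuous functions on a compact Hausdorff space converges uniformly to a continuous function, so $C(Y_A)$ is uniformly complete; equivalently, this is already implicit in the fact recalled above that $\zeta_A\colon A\to C(Y_A)$ is the uniform completion of $A$.

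For the universal property, let $B\in\ubal$ and let $f\colon A\to B$ be a $\bal$-morphism; I claim there is a unique $\bal$-morphism $g\colon C(Y_A)\to B$ with $g\circ\zeta_A=f$. Existence: $\zeta$ is a natural transformation from the identity functor on $\bal$ to $\C\Y$ (it is the unit of the Gelfand dual adjunction), so $\zeta_B\circ f=\C\Y(f)\circ\zeta_A$; since $B$ is uniformly complete, $\zeta_B$ is an isomorphism, and putting $g:=\zeta_B^{-1}\circ\C\Y(f)$ yields $g\circ\zeta_A=f$. Uniqueness: $\zeta_A(A)$ is uniformly dense in $C(Y_A)$ (Stone--Weierstrass, as recalled above), every $\bal$-morphism is nonexpansive for the norm and hence norm-continuous (from $\|\alpha(a)\|\le\|a\|$), and the norm on $B$ is a genuine norm because $B$ is archimedean; therefore two $\bal$-morphisms $C(Y_A)\to B$ that agree on the dense subset $\zeta_A(A)$ coincide.

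It remains to assemble the adjunction. The assignment $A\mapsto C(Y_A)=\C\Y(A)$ is the corestriction to $\ubal$ of the composite functor $\C\Y\colon\bal\to\bal$, and the universal property just proved says exactly that $\zeta_A$ is a universal arrow from $A$ to the inclusion $\ubal\hookrightarrow\bal$; hence this functor is left adjoint to the inclusion, with unit $\zeta$, so $\ubal$ is a full reflective subcategory of $\bal$ with reflector $\C\Y$. (Alternatively, one may read the adjunction straight off Gelfand duality: for $A\in\bal$ and $B\in\ubal$, using that $\zeta_B$ is an isomorphism and that $\C$ is a contravariant full embedding of $\KHaus$ into $\ubal$, one obtains natural bijections $\operatorname{Hom}_{\ubal}(\C\Y A,B)\cong\operatorname{Hom}_{\KHaus}(\Y B,\Y A)\cong\operatorname{Hom}_{\bal}(A,B)$.) The only subtle point is the uniqueness clause: one must note that a $\bal$-morphism out of $C(Y_A)$ is pinned down by its restriction to the uniformly dense subalgebra $\zeta_A(A)$, which rests on $\bal$-morphisms being automatically norm-contractive and on the archimedean hypothesis making $\|\cdot\|$ an honest (Hausdorff) norm on the codomain.
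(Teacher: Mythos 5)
Your proof is correct and follows the same route the paper has in mind: the paper derives this proposition directly from the observation that $\zeta_A\colon A\to C(Y_A)$ is the uniform completion of $A$ (via Stone--Weierstrass) together with the unit/counit identities of the dual adjunction, and your argument simply spells out the universal property and the density-plus-contractivity uniqueness step that the paper leaves implicit. No gaps.
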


\begin{remark} \label{rem: compositions}
Since $\C$ and $\Y$ form a dual adjunction between $\bal$ and $\KHaus$, the natural transformations $\zeta$ and $\varepsilon$ satisfy $\Y(\zeta_A) \circ \varepsilon_{Y_A} = 1_{Y_A}$ and $\C(\varepsilon_X) \circ \zeta_{C(X)} = 1_{C(X)}$ for each $A \in \bal$ and $X \in \KHaus$ by \cite[Thm.~IV.1.1]{Mac71}. Moreover, since $\varepsilon$ is a natural isomorphism, $\Y(\zeta_A) = \varepsilon_{Y_A}^{-1}$ and $\zeta_{C(X)} = \C(\varepsilon_X)^{-1}$.
\end{remark}

If $A$ is an $\ell$-subalgebra of $B \in \bal$, we say $A$ is \emph{uniformly dense} in $B$ if $A$ is dense in $B$ with respect to the topology induced by the norm on $B$. In the following lemma we collect several facts that will be used subsequently. 

\begin{lemma} \cite[Lem.~2.9]{BMO13a} \label{lem: bal morphism properties}
Let $\alpha : A \to B$ be a $\bal$-morphism.
\begin{enumerate}[$(1)$]
\item $\Y(\alpha)$ is onto iff $\alpha$ is \emph{1-1} iff $\alpha$ is a monomorphism.
\item $\Y(\alpha)$ is \emph{1-1} iff $\alpha[A]$ is uniformly dense in $B$ iff $\alpha$ is an epimorphism.
\item $\Y(\alpha)$ is a homeomorphism iff $\alpha$ is a bimorphism.
\end{enumerate}
\end{lemma}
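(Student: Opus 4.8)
The plan is to prove (1) and (2) each as a cycle of three implications, after which (3) is immediate. Throughout I will use the facts recalled in \S\ref{subsec: Gelfand duality}: $\zeta_A\colon A\to C(Y_A)$ is an isometric embedding with uniformly dense image; $\zeta$ is natural, so $\zeta_B(\alpha(a))=\zeta_A(a)\circ\Y(\alpha)$ and hence $\zeta_B(\alpha(a))(N)=\zeta_A(a)(\Y(\alpha)(N))$ for $N\in Y_B$; a maximal $\ell$-ideal $N$ of $B$ is recovered as $\{b\in B:\zeta_B(b)(N)=0\}$; $\zeta_A[A]$ separates the points of $Y_A$, and more generally $C(Z)$ separates the points of any compact Hausdorff $Z$; and every $\bal$-morphism is norm nonincreasing, hence Lipschitz, hence continuous for the norm topology, which is Hausdorff since the algebras are archimedean.

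For (1): ``1-1 $\Rightarrow$ mono'' is clear. For ``mono $\Rightarrow$ 1-1'', note $\bal$ has finite products and is closed under $\ell$-subalgebras, so the kernel pair $K=\{(a,a')\in A\times A:\alpha(a)=\alpha(a')\}$ lies in $\bal$; its two projections are equalized by $\alpha$, so if $\alpha$ is mono they coincide, i.e.\ $\alpha$ is 1-1. For ``$\Y(\alpha)$ onto $\Rightarrow$ 1-1'': if $0\ne a\in A$ then $\zeta_A(a)(M)\ne 0$ for some $M\in Y_A$; writing $M=\Y(\alpha)(N)$ gives $\zeta_B(\alpha(a))(N)\ne 0$, so $\alpha(a)\ne 0$. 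The substantive implication is ``1-1 $\Rightarrow$ $\Y(\alpha)$ onto''. Since $\alpha$ preserves absolute values and $\alpha(a^+)=\alpha(a)^+$, an injective $\bal$-morphism reflects the order: $\alpha(a)\ge 0$ iff $\alpha(a)=\alpha(a^+)$ iff $a=a^+$ iff $a\ge 0$. Now fix $M\in Y_A$ and let $J$ be the $\ell$-ideal of $B$ generated by $\alpha[M]$. If $J=B$, then $1\le\bigl|\sum b_i\alpha(m_i)\bigr|\le\sum|b_i|\,\alpha(|m_i|)\le n\cdot\alpha(m)$, where $m=\sum|m_i|\in M$ (using that $M$ is an $\ell$-ideal and that $B$ is bounded, so $|b_i|\le n\cdot 1$); reflecting the order gives $m\ge\tfrac1n\cdot 1$ in $A$, whence $\tfrac1n\cdot 1\in M$ and $1\in M$, a contradiction. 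So $J$ is a proper $\ell$-ideal; extending it by Zorn's lemma to a maximal $\ell$-ideal $N$ of $B$ gives $M\subseteq\alpha^{-1}(N)$, and since $\alpha^{-1}(N)$ is a proper $\ell$-ideal and $M$ is maximal, $M=\alpha^{-1}(N)=\Y(\alpha)(N)$.

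For (2): ``$\alpha[A]$ uniformly dense $\Rightarrow$ $\alpha$ epi'' holds because morphisms out of $B$ are continuous and determined on the dense set $\alpha[A]$, and their common target is Hausdorff. For ``$\Y(\alpha)$ 1-1 $\Rightarrow$ $\alpha[A]$ uniformly dense'': a 1-1 continuous map out of the compact space $Y_B$ identifies $Y_B$ with a closed subspace $Z\subseteq Y_A$; the restriction $\zeta_A[A]|_Z$ is an $\ell$-subalgebra of $C(Z)$ containing the constants and separating points of $Z$, hence uniformly dense in $C(Z)$ by Stone--Weierstrass; transporting along $Z\cong Y_B$ and using naturality of $\zeta$ shows $\zeta_B[\alpha[A]]$ is uniformly dense in $C(Y_B)$, and since $\zeta_B$ is an isometric embedding, $\alpha[A]$ is uniformly dense in $B$. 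For ``$\alpha$ epi $\Rightarrow$ $\Y(\alpha)$ 1-1'': $\zeta_B$ is an epimorphism (dense image), so from $\C\Y(\alpha)\circ\zeta_A=\zeta_B\circ\alpha$ the right-hand side is a composite of epimorphisms, whence $\C\Y(\alpha)$ is an epimorphism in $\bal$, and therefore also in its full subcategory $\ubal$; by Gelfand duality (Theorem~\ref{thm: Gelfand}) $\ubal$ is dual to $\KHaus$, so $\Y(\alpha)$ is a monomorphism in $\KHaus$, hence 1-1 (test against a one-point space).

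Part (3) is then immediate: $\alpha$ is a bimorphism iff it is mono and epi iff, by (1) and (2), $\Y(\alpha)$ is onto and 1-1, i.e.\ a continuous bijection between compact Hausdorff spaces, which is automatically a homeomorphism. The main obstacle is the implication ``$\alpha$ 1-1 $\Rightarrow$ $\Y(\alpha)$ onto'' in (1): it combines the order-reflection property of injective morphisms with the $\ell$-ideal estimate above to show that every maximal $\ell$-ideal of $A$ extends to one of $B$. The implication ``$\alpha$ epi $\Rightarrow$ $\Y(\alpha)$ 1-1'' in (2) is the other delicate point, but it is handled cleanly once one passes to the uniform completions and uses Gelfand duality for $\ubal$.
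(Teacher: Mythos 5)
Your proof is correct. Note that the paper does not actually prove this lemma but cites it from \cite[Lem.~2.9]{BMO13a}; your argument is essentially the standard one behind that reference — order reflection for injective morphisms together with the extension of the $\ell$-ideal generated by $\alpha[M]$ for part (1), Stone--Weierstrass applied to the closed image of $Y_B$ in $Y_A$ for part (2), and the compact-Hausdorff continuous-bijection argument for part (3) — so there is nothing to add.
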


\subsection{Free objects in $\bal$} \label{subsec: free objects}

By \cite[Thm.~3.2]{BCM20c}, free objects on nonempty sets do not exist in $\bal$. To see this, observe that each $\bal$-morphism $\alpha : A \to B$ satisfies $\|\alpha(a)\| \le \|a\|$ for each $a \in A$. Now suppose that a free object $F \in \bal$ exists on $X \ne \varnothing$. Let $f : X \to F$ be the corresponding map, let $x \in X$, and let $r \in \mathbb{R}$ satisfy $r > \| f(x)\|$. Define $g : X \to \mathbb{R}$ by $g(y) = r$ for each $y \in X$. Then there is a $\bal$-morphism $\alpha : F \to \mathbb{R}$ with $\alpha \circ f = g$. Therefore, $\alpha(f(x)) = g(x) = r$, violating the inequality $\|\alpha(f(x)) \|\le \|f(x)\|$. Taking this into account leads to the following notion. 

\begin{definition}\cite[Def.~3.3]{BCM20c}
\begin{itemize}
\item A {\em weight function} on a set $X$ is a function $w$ from $X$ into the nonnegative real numbers.
\item A \emph{weighted set} is a pair $(X, w)$ where $X$ is a set and $w$ is a weight function on $X$.
\item A \emph{weighted set morphism} $f : (X_1, w_1) \to (X_2, w_2)$ is a function $f : X_1 \to X_2$ satisfying $w_2(f(x)) \le w_1(x)$ for each $x \in X$.
\end{itemize}
\end{definition}

There is a forgetful functor $U$ from $\bal$ to the category of weighted sets that associates to each $A \in \bal$ the weighted set $(A, \|\cdot\|)$. By \cite[Thm.~3.9]{BCM20c}, $U$ has a left adjoint, thus yielding the following theorem.

\begin{theorem} \label{thm: UMP}
Free objects in $\bal$ exist over weighted sets. 
\end{theorem}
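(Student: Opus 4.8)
The plan is to construct the left adjoint of $U$ explicitly and verify its universal property, the point being that the Gelfand/Yosida machinery of \S\ref{subsec: Gelfand duality} does the nontrivial work.

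First I would build the candidate free object. Given a weighted set $(X,w)$, form the space
\[
Y := \prod_{x \in X} [-w(x), w(x)],
\]
which is compact Hausdorff by Tychonoff's theorem, and let $\pi_x : Y \to \mathbb R$ be the coordinate projections. Each $\pi_x$ is continuous, so $\pi_x \in C(Y)$, and $\|\pi_x\| = w(x)$ since the $x$-th coordinate ranges over $[-w(x),w(x)]$. Let $F(X,w)$ be the $\ell$-subalgebra of $C(Y) \in \bal$ generated by $\{\pi_x \mid x \in X\}$. Since $\bal$ is closed under $\ell$-subalgebras (boundedness and the archimedean property pass to sub-$\ell$-algebras, and $1 \in F(X,w)$), we get $F(X,w) \in \bal$. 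Define $\eta : X \to F(X,w)$ by $\eta(x) = \pi_x$; then $\|\eta(x)\| = w(x) \le w(x)$, so $\eta$ is a weighted set morphism $(X,w) \to U(F(X,w))$.

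Next I would verify the universal property. Given $A \in \bal$ and a weighted set morphism $g : (X,w) \to (A, \|\cdot\|)$, I must produce a unique $\bal$-morphism $\bar g : F(X,w) \to A$ with $U(\bar g)\circ \eta = g$. Uniqueness is immediate because $F(X,w)$ is generated as an $\ell$-algebra by the elements $\pi_x = \eta(x)$, on which $\bar g$ is forced to equal $g$. For existence, pass to the Yosida space $Z := \Y(A)$ and the monomorphism $\zeta_A : A \to C(Z)$. For each $z \in Z$, set $v_z(x) := \zeta_A(g(x))(z)$; since $|\zeta_A(g(x))(z)| \le \|\zeta_A(g(x))\| = \|g(x)\| \le w(x)$, in fact $v_z \in Y$, so we obtain a map $\psi : Z \to Y$, $\psi(z) = v_z$. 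It is continuous because $\pi_x \circ \psi = \zeta_A(g(x)) \in C(Z)$ for every $x$ and $Y$ carries the product topology. Applying $\C$ yields a $\bal$-morphism $\C(\psi) : C(Y) \to C(Z)$ with $\C(\psi)(\pi_x) = \zeta_A(g(x))$; as $\C(\psi)$ is an $\ell$-algebra morphism it carries the $\ell$-subalgebra $F(X,w)$ into the $\ell$-subalgebra generated by the $\zeta_A(g(x))$, which lies inside $\zeta_A(A)$. Composing the resulting morphism $F(X,w) \to \zeta_A(A)$ with the inverse of the $\bal$-isomorphism $A \xrightarrow{\ \sim\ } \zeta_A(A)$ gives $\bar g : F(X,w) \to A$ with $\bar g(\pi_x) = g(x)$, as required. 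Thus $\eta$ exhibits $F(X,w)$ as the free object on $(X,w)$, and $(X,w) \mapsto F(X,w)$ is the sought left adjoint of $U$.

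The main obstacle is the existence half of the universal property: one cannot simply ``extend $g$ by polynomials,'' because $F(X,w)$ also involves the lattice operations, and a priori distinct lattice-polynomial expressions in the $\pi_x$ may coincide in $C(Y)$ and then must be sent to equal elements of $A$. Realizing $\bar g$ as a corestriction of $\C(\psi)$ for a continuous $\psi : Z \to Y$ is exactly what makes this automatic. Within that step the two points requiring care are that $\psi$ genuinely lands in the compact box $Y$ (this is where the hypothesis $\|g(x)\| \le w(x)$ and the norm-preservation $\|\zeta_A(a)\|=\|a\|$ enter) and that $\psi$ is continuous (the universal property of the product topology). Everything else --- that $F(X,w)\in\bal$, that $\zeta_A$ corestricts to a $\bal$-isomorphism onto its image, and uniqueness --- is routine given \S\ref{subsec: Gelfand duality}.
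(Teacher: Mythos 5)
Your proof is correct. Note that the paper does not actually prove this theorem at all: it simply cites \cite[Thm.~3.9]{BCM20c} for the existence of the left adjoint to the forgetful functor $U$, so there is no in-paper argument to compare against; what you have written is a self-contained substitute for that citation. Your construction --- realizing the free object as the (unital) $\ell$-subalgebra of $C\bigl(\prod_{x\in X}[-w(x),w(x)]\bigr)$ generated by the coordinate projections, and obtaining the extension $\bar g$ as a corestriction of $\C(\psi)$ for the continuous map $\psi:Y_A\to\prod_x[-w(x),w(x)]$ with components $\zeta_A(g(x))$ --- is sound, and you correctly identify the one genuinely delicate point: existence of $\bar g$ cannot be argued by term induction because of possible coincidences among lattice-polynomial expressions, and routing everything through a continuous map into the product box makes well-definedness automatic. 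Two small remarks. First, in checking $v_z\in Y$ you only need the inequality $\|\zeta_A(a)\|\le\|a\|$, which the paper records for arbitrary $\bal$-morphisms in \S\ref{subsec: Gelfand duality}, so you need not invoke that $\zeta_A$ is isometric (though it is, since it is injective and hence an order-embedding). Second, it is worth saying explicitly that $\zeta_A[A]$ is an $\ell$-subalgebra of $C(Y_A)$ and that the corestriction $A\to\zeta_A[A]$ is a $\bal$-isomorphism because a bijective $\ell$-algebra morphism has an $\ell$-algebra morphism as its inverse; with that, $\bar g=\zeta_A^{-1}\circ\C(\psi)|_{F(X,w)}$ is a unital $\ell$-algebra morphism and the universal property follows. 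The degenerate cases $w(x)=0$ and $X=\varnothing$ also cause no trouble, since the corresponding factors are singletons and the empty product is a one-point space.
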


\subsection{Modal operators on bounded archimedean $\ell$-algebras} \label{subsec: modal operators}

In \cite{BCM20a} the notion of a modal operator on $A \in \bal$ was introduced, generalizing that of a modal operator on a boolean algebra. The motivating example comes from a continuous relation $R$ on a compact Hausdorff space $X$ (see Definition~\ref{def: continuous relation}). If $R$ is serial (meaning $R[x] \ne \varnothing$ for each $x \in X$), then there is a natural definition of a modal operator $\Box_R$ on $C(X)$, given by $\Box_R (f)(x) = \inf fR[x]$ for each $x \in X$. It is straightforward to see that $\Box_R$ preserves meet, $0,1$, addition by a scalar, and multiplication by a nonnegative scalar. If $R$ is not serial, then $\Box_R$ needs to be redefined since $R[x]$ may be empty. For an arbitrary continuous relation we define $\Box_R$ by
\[
(\Box_Rf)(x) = \left\{ \begin{array}{cc} \inf fR[x] & \textrm{if }R[x] \ne \varnothing \\ 1 & \textrm{if } R[x] = \varnothing. \end{array} \right.
\]
If $R$ is not serial, then $\Box_R 0 \ne 0$, and the properties of $\Box_R$ become more complicated. Looking carefully at those properties, we arrive at the following definition.

\begin{definition}\cite[Def.~3.10]{BCM20a}\label{def:mbal}
\begin{enumerate}
\item Let $A \in \bal$. We say that a unary function $\Box: A \to A$ is a \textit{modal operator} on $A$ provided $\Box$ satisfies the following
axioms for each $a,b \in A$ and $r \in \mathbb{R}$:
\begin{enumerate}
\item[(M1)] $\Box (a \meet b) = \Box a \meet \Box b$.
\item[(M2)] $\Box r = r + (1 - r)\Box 0$.
\item[(M3)] $\Box (a^+)=(\Box a)^+$.
\item[(M4)] $\Box (a+r)=\Box a + \Box r - \Box 0$.
\item[(M5)] $\Box (r a)= (\Box r)(\Box a)$ provided $r\ge 0$.
\end{enumerate}
\item If $\Box$ is a modal operator on $A\in\bal$, then we call the pair $(A, \Box)$ a \textit{modal bounded archimedean $\ell$-algebra}.
\item Let $\mbal$ be the category of modal bounded archimedean $\ell$-algebras and unital $\ell$-algebra homomorphisms preserving $\Box$.
\item Let $\mubal$ be the full subcategory of $\mbal$ consisting of $(A,\Box)$ with $A \in \ubal$.
\end{enumerate}
\end{definition}

\begin{remark} \label{rem: properties of Box}
Let $(A, \Box) \in \mbal$. Axiom (M1) implies that $\Box$ is order preserving. From (M2) we have $\Box 1 = 1$. Finally (M3) shows that if $0 \le a$, then $\Box a = (\Box a)^+$, so $0 \le \Box a$. In particular, $0 \le \Box 0$.
\end{remark}

\begin{definition} \cite[Sec.~2]{BBH15} \label{def: continuous relation}
\begin{enumerate}
\item A binary relation $R$ on a compact Hausdorff space $X$ is \emph{continuous} if:
\begin{enumerate}
\item $R[x]$ is closed for each $x\in X$.
\item $F\subseteq X$ closed implies $R^{-1}[F]$ is closed.
\item $U\subseteq X$ open implies $R^{-1}[U]$ is open.
\end{enumerate}
\item If $X$ is compact Hausdorff and $R$ is a continuous relation on $X$, then we call $(X, R)$ a \emph{compact Hausdorff frame}.
\item A \emph{bounded morphism} (or \emph{p-morphism}) between $(X,R)$ and $(Y,S)$ is a map $f:X\to Y$ satisfying $f(R[x])=S[f(x)]$ for each  $x\in X$ (equivalently, $f^{-1} (S^{-1}[y]) = R^{-1}[f^{-1}(y)]$ for each $y \in Y$).
\item Let $\KHF$ be the category of compact Hausdorff frames and continuous bounded morphisms.
\end{enumerate}
\end{definition}

\begin{theorem} \cite[Thm.~5.3]{BCM20a} \label{thm: mbal}
There is a dual adjunction between $\mbal$ and $\KHF$ which restricts to a dual equivalence between $\mubal$ and $\KHF$.
\end{theorem}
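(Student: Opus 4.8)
The plan is to obtain Theorem~\ref{thm: mbal} not by building functors between $\mbal$ and $\KHF$ from scratch, but by composing the equivalences and adjunctions assembled in the earlier sections. First I would record the (essentially routine) isomorphism $\KHF \cong \Coalg(\V)$ for the Vietoris endofunctor $\V$ on $\KHaus$: by definition a continuous relation $R$ on $X\in\KHaus$ is the same data as a continuous map $\rho_R : X \to \V(X)$, hence a $\V$-coalgebra structure on $X$, and a map $f : X \to Y$ is a bounded morphism $(X,R)\to(Y,S)$ exactly when it is a coalgebra morphism $\rho_R\to\rho_S$. Since each $\varepsilon_X : X\to Y_{C(X)}$ is a homeomorphism, this identification also lets us view $\KHF$ as the category of $\V$-coalgebras carried by Yosida spaces, which is the form in which it will meet the algebraic side.

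Next I would invoke the two isomorphisms established in Sections~4 and~6, namely $\Alg(\H)\cong\mbal$ and $\Alg(\Hu)\cong\mubal$, together with the fact (Section~6) that $\Alg(\Hu)\cong\Algu(\H)$ is a reflective subcategory of $\Alg(\H)$ which corresponds under the first isomorphism to the inclusion $\mubal\hookrightarrow\mbal$. Combining these with the main result of Section~6 — the dual adjunction $\Alg(\H)\leftrightarrows\Coalg(\V)$ that restricts to a dual equivalence $\Alg(\Hu)\leftrightarrows\Coalg(\V)$ — and with $\KHF\cong\Coalg(\V)$ above, one obtains by composition a dual adjunction $\mbal\leftrightarrows\KHF$ restricting to a dual equivalence $\mubal\leftrightarrows\KHF$. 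At the level of abstract nonsense this is just "the composite of an equivalence with an adjunction is an adjunction," so no genuinely new categorical input is required.

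The substance of the argument — and where I would spend most of the effort — is to make the composite functors explicit and confirm they agree with the concrete ones. Starting from $(X,R)\in\KHF$, the coalgebra $\rho_R$ is sent by the Section~6 functor to an $\H$-algebra on $C(X)$, and I would check that, under $\Alg(\H)\cong\mbal$, this $\H$-algebra structure is precisely the modal operator $\Box_R$ from \S~\ref{subsec: modal operators}, including the case split on whether $R[x]=\varnothing$. Conversely, from $(A,\Box)\in\mbal$, i.e.\ an $\H$-algebra $\alpha:\H(A)\to A$, applying $\Y$ and using the Section~5 homeomorphism between $Y_{\H(A)}$ and $\V(Y_A)$ gives a continuous map $Y_A\to\V(Y_A)$, hence a continuous relation $R_\Box$ on $Y_A$; I would verify that $R_\Box$ is exactly the relation determined by $\Box$ through the maximal $\ell$-ideals of $A$, and that bounded morphisms are matched correctly on both sides. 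Finally I would identify the unit and counit of the resulting dual adjunction: the counit on $\KHF$ should be induced by the homeomorphisms $\varepsilon_X$, and the unit on $\mbal$ by the uniform-completion maps $\zeta_A$ — now to be shown to be $\mbal$-morphisms — so that the restriction to $\mubal$ becomes a dual equivalence simply because $\zeta_A$ is an isomorphism whenever $A$ is uniformly complete.

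The main obstacle I anticipate is precisely this bookkeeping: carrying the abstract $\H$-algebra structure through the whole chain (the free objects on weighted sets used to define $\H$, the homeomorphism $Y_{\H(A)}\cong\V(Y_A)$ of Section~5, and the explicit formulas for $\Box_R$ and $R_\Box$) and checking that the square of functors in the introduction commutes on the nose, including compatibility of $\mubal\hookrightarrow\mbal$ with $\Algu(\H)\hookrightarrow\Alg(\H)$. Once the functors and natural transformations are pinned down, the adjunction and the equivalence are formal consequences of the results already in hand.
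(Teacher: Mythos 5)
Your proposal follows essentially the same route as the paper: Section~7 derives this theorem precisely by composing the isomorphism $\Alg(\H)\cong\mbal$ of Theorem~\ref{thm: mbal = Alg(H)}, the dual adjunction/equivalence of Theorem~\ref{thm: dual adjunction} and Corollary~\ref{cor: dual equivalence}, and the isomorphism $\Coalg(\V)\cong\KHF$ from \cite{BBH15}, and then checks in a separate proposition that the composite functors $\SS\F\N$ and $\M\G\T$ coincide with the concrete $\Y$ and $\C$ (i.e.\ with $R_\Box$ and $\Box_R$), exactly the bookkeeping you outline. Your identification of the unit and counit with $\zeta_A$ and $\varepsilon_X$ also matches Propositions~\ref{prop: xi} and~\ref{prop: kappa}.
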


The functors establishing the adjunction of Theorem~\ref{thm: mbal} extend those of Gelfand duality. If $(A, \Box) \in \mbal$, define $R_\Box$ on $Y_A$ by $xR_\Box y$ if $0 \le a \in y$ implies $\Box a \in x$. Then $\Y(A,\Box) := (Y_A, R_\Box) \in \KHF$.  Going the other direction, if $(X, R) \in \KHF$, define $\Box_R$ on $C(X)$ as above.
Then $\C(X,R) := (C(X), \Box_R) \in \mbal$.

\section{The endofunctor $\H : \bal \to \bal$} \label{sec: H}

In this section we define the endofunctor $\H$ on $\bal$. 
Let $A\in\bal$. Following \cite[Def.~3.7]{BCM20c}, we call an $\ell$-ideal $I$ of $A$ \emph{archimedean} if $A/I$ is archimedean (and hence $A/I\in\bal$). Archimedean $\ell$-ideals were studied by Banaschewski in the category of archimedean $f$-rings (see \cite[App.~2]{Ban97} and \cite{Ban05a}). It is easy to see that the intersection of archimedean $\ell$-ideals is archimedean, and hence for each $S \subseteq A$ there is a least archimedean $\ell$-ideal containing $S$. As is standard, we call it the \emph{archimedean $\ell$-ideal generated by} $S$.
 


As we pointed out in Section~\ref{subsec: free objects}, for each $A\in\bal$, the norm on $A$ is a weight function on $A$. Below we will work with a different weight function on $A$.

\begin{definition}
Let $A \in \bal$. Define $w_A$ on $A$ by $w_A(a) = \max\{ \|a\|, 1\}$. 
\end{definition}

It is clear that $(A,w_A)$ is a weighted set. 
We use $w_A$ in order for a modal operator to be a weighted set morphism (see Lemma~\ref{lem: norm of box a}). The next definition is one of the main definitions of the paper and is motivated by the axioms defining a modal operator on $A\in\bal$.


\begin{definition} \label{def: F(A)}
Let $A \in \bal$. 
\begin{enumerate}
\item Let $F(A)$ be the free object in $\bal$ on the weighted set $(A, w_A)$, and let $f_A : A \to F(A)$ be the associated map. We let $I_A$ be the archimedean $\ell$-ideal of $F(A)$ generated by the following elements, where $a,b\in A$ and $r\in\mathbb R$:
\begin{enumerate}
\item $f_A(a \wedge b) - f_A(a)\wedge f_A(b)$;
\item $f_A(r) - r - (1 - r) f_A(0)$;
\item $f_A(a^+) - f_A(a)^+$;
\item $f_A(a + r) - f_A(a) - f_A(r) + f_A(0)$;
\item $f_A(r a) - f_A(r) f_A(a) $ if $0 \le r$.
\end{enumerate}
\item Let $\H(A) = F(A)/I_A$ and $h_A : A \to \H(A)$ be the composition of $f_A$ with the quotient map $\pi : F(A) \to \H(A)$. 
\item For $a \in A$ let $\Box_a = h_A(a)$.
\end{enumerate}
\end{definition}

\begin{remark} \label{rem: Box relations}
The set $\{\Box_a \mid a \in A\}$ generates $\H(A)$ (in $\bal$), and these generators satisfy the following relations:
\begin{enumerate}
\item[(F1)]$\Box_{a \wedge b} = \Box_a \wedge \Box_b$.
\item[(F2)]$\Box_r = r + (1-r)\Box_0$.
\item[(F3)]$\Box_{a^+} = (\Box_a)^+$.
\item[(F4)]$\Box_{a + r} = \Box_a + \Box_r - \Box_0$.
\item[(F5)]$\Box_{r a} = \Box_r \Box_a$ if $0 \le r$.
\end{enumerate}
\end{remark}

\begin{theorem}
$\H$ is a covariant endofunctor on $\bal$.
\end{theorem}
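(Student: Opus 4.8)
The plan is to verify that $\H$ is well-defined on objects (which is essentially Definition~\ref{def: F(A)}) and then to define $\H$ on morphisms and check functoriality. The key is that everything is forced by universal properties, so the work is bookkeeping rather than insight.

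First I would define the action on morphisms. Given a $\bal$-morphism $\alpha : A \to B$, I want a $\bal$-morphism $\H(\alpha) : \H(A) \to \H(B)$. Observe that $\alpha$ is a weighted-set morphism $(A, w_A) \to (B, w_B)$: indeed $\|\alpha(a)\| \le \|a\|$ (recalled in \S\ref{subsec: Gelfand duality}), hence $w_B(\alpha(a)) = \max\{\|\alpha(a)\|, 1\} \le \max\{\|a\|, 1\} = w_A(a)$. Composing $\alpha$ with $f_B : B \to F(B)$ and then with $\pi_B : F(B) \to \H(B)$ gives a weighted-set map $A \to \H(B)$, so by the universal property of the free object $F(A)$ (Theorem~\ref{thm: UMP}) there is a unique $\bal$-morphism $\widetilde{\alpha} : F(A) \to \H(B)$ with $\widetilde{\alpha} \circ f_A = \pi_B \circ f_B \circ \alpha$, equivalently $\widetilde{\alpha}(f_A(a)) = \Box_{\alpha(a)}$ for all $a \in A$. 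The next step is to show $\widetilde{\alpha}$ kills $I_A$, so that it factors through $\pi_A : F(A) \to \H(A)$ as the desired $\H(\alpha)$. Since $\widetilde{\alpha}$ is a $\bal$-morphism and $I_A$ is the archimedean $\ell$-ideal generated by the five families of elements in Definition~\ref{def: F(A)}(1), and since the kernel of any $\bal$-morphism is an archimedean $\ell$-ideal (the image lies in $\bal$, so the quotient is archimedean), it suffices to check $\widetilde{\alpha}$ sends each generator to $0$. For generator~(a): $\widetilde\alpha(f_A(a\wedge b) - f_A(a)\wedge f_A(b)) = \Box_{\alpha(a\wedge b)} - \Box_{\alpha(a)}\wedge\Box_{\alpha(b)} = \Box_{\alpha(a)\wedge\alpha(b)} - \Box_{\alpha(a)}\wedge\Box_{\alpha(b)} = 0$ by relation (F1) in $\H(B)$, using that $\alpha$ preserves $\wedge$. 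Generators (b)--(e) are handled the same way, using that $\alpha$ preserves $r$, $(-)^+$, addition, and scalar multiplication, together with relations (F2)--(F5) in $\H(B)$. Thus $\widetilde\alpha$ factors uniquely as $\H(\alpha) \circ \pi_A$, characterized by $\H(\alpha)(\Box_a) = \Box_{\alpha(a)}$.

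Finally I would check the functor axioms. For identities, $\H(1_A)$ and $1_{\H(A)}$ are both $\bal$-morphisms $\H(A) \to \H(A)$ agreeing on the generating set $\{\Box_a \mid a \in A\}$ (sending $\Box_a$ to $\Box_a$), and since that set generates $\H(A)$ in $\bal$ (Remark~\ref{rem: Box relations}), they coincide. For composition, given $\alpha : A \to B$ and $\beta : B \to C$, both $\H(\beta) \circ \H(\alpha)$ and $\H(\beta\circ\alpha)$ are $\bal$-morphisms $\H(A) \to \H(C)$ sending $\Box_a \mapsto \Box_{\beta(\alpha(a))}$, hence agree on generators and are therefore equal. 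Covariance is immediate since $\H(\alpha)$ goes $\H(A) \to \H(B)$ when $\alpha$ goes $A \to B$.

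I do not anticipate a genuine obstacle; the only point requiring care is the one flagged above, namely confirming that $\alpha$ really is a weighted-set morphism for the weight $w_A$ (this is precisely why $w_A$ is defined with the $\max$ against $1$ — and it is the reason the norm itself would also work here, though $w_A$ is needed elsewhere), and that the kernel of a $\bal$-morphism is an archimedean $\ell$-ideal so that vanishing on generators forces vanishing on all of $I_A$. Everything else follows from uniqueness in the relevant universal properties.
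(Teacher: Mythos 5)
Your proof is correct and follows essentially the same route as the paper: check that $\alpha$ is a weighted-set morphism for $w_A$, lift it via the universal property of $F(A)$, verify the generators of $I_A$ are killed, and then check the functor axioms on the generating set $\{\Box_a\}$. The only (cosmetic) difference is that you map $F(A)$ directly into $\H(B)$ and argue via $I_A\subseteq\ker\widetilde{\alpha}$, whereas the paper first lifts to $\tau\colon F(A)\to F(B)$ and shows $\tau(I_A)\subseteq I_B$; these are equivalent since $\widetilde{\alpha}=\pi_B\circ\tau$ and $\ker(\pi_B\circ\tau)=\tau^{-1}(I_B)$.
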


\begin{proof}
Let $\alpha : A \to B$ be a $\bal$-morphism. Then $\alpha : (A, w_A) \to (B, w_B)$ is a weighted set morphism since
\[
w_B(\alpha(a)) = \max\{\|\alpha(a)\|,1\} \le \max \{\|a\|, 1\}  = w_A(a)
\]
for each $a \in A$. Therefore, there is a unique $\bal$-morphism $\tau : F(A) \to F(B)$ making the following diagram commute.
\[
\begin{tikzcd}[column sep = 5pc]
A \arrow[r, "f_A"] \arrow[d, "\alpha"'] & F(A) \arrow[d, "\tau"] \\
B \arrow[r, "f_B"']  & F(B)
\end{tikzcd}
\]
We show that $\tau(I_A) \subseteq I_B$. From this it will follow that there is an induced $\bal$-morphism $\overline{\tau} : \H(A) \to \H(B)$ such that $\overline{\tau} \circ h_A = h_B \circ \alpha$. To see that $\tau(I_A) \subseteq I_B$, it suffices to show that the five sets of generators (a)--(e) of $I_A$ are sent to $I_B$ by $\tau$. Since the arguments are similar, we only give the argument for the generators of type (a).

Let $a,b \in A$. Then
\begin{align*}
\tau(f_A(a \wedge b) - f_A(a) \wedge f_A(b)) &= \tau f_A(a \wedge b) - \tau f_A(a) \wedge \tau f_A(b) \\
&= f_B\alpha(a \wedge b) - f_B\alpha(a) \wedge f_B\alpha(b) \\
&= f_B(\alpha(a) \wedge \alpha(b)) - f_B\alpha(a) \wedge f_B\alpha(b) \\
&\in I_B.
\end{align*}

%
%
%
Therefore, $\tau$ induces a $\bal$-morphism $\overline{\tau} : \H(A) \to \H(B)$. We set $\H(\alpha) = \overline{\tau}$. It follows that $\H(\alpha)$ is the unique $\bal$-morphism that makes the following diagram commute.
\[
\begin{tikzcd}[column sep = 5pc]
A \arrow[r, "h_A"] \arrow[d, "\alpha"'] & \H(A) \arrow[d, "\H(\alpha)"] \\
B \arrow[r, "h_B"']  & \H(B)
\end{tikzcd}
\]
It is clear that $\H$ sends identity morphisms to identity morphisms. If $\alpha : A \to B$ and $\gamma : B \to C$ are $\bal$-morphisms, then
\[
\H(\gamma\circ \alpha) \circ h_A = h_C \circ \gamma \circ \alpha
= \H(\gamma) \circ h_B \circ \alpha 
= \H(\gamma) \circ \H(\alpha) \circ h_A. 
\]
Since $h_A[A]$ generates $\H(A)$, we see that $\H(\gamma \circ \alpha) = \H(\gamma) \circ \H(\alpha)$. Thus, $\H$ is a covariant functor.
\end{proof}

\begin{remark}\label{rem: H on box elements}
From the commutativity $\H(\alpha) \circ h_A = h_B \circ \alpha$ it follows that $\H(\alpha)(\Box_a) = \Box_{\alpha(a)}$ for each $a \in A$. This will be used subsequently.
\end{remark}

\section{$\Alg(\H)$ and $\mbal$} \label{sec: Alg(H) and mbal}

In this section we show that the category $\Alg(\H)$ of algebras for the endofunctor $\H$ is isomorphic to $\mbal$. We start by recalling the definition of algebras for an endofunctor (see, e.g., \cite[Def.~5.37]{AHS06}).

\begin{definition} \label{def: Alg(T)}
Let $\sf{C}$ be a category and $\T:\sf{C} \to \sf{C}$ an endofunctor on $\sf{C}$.
\begin{enumerate}
\item An {\em algebra} for $\T$ is a pair $(A,f)$ where $A$ is an object of $\sf{C}$ and $f:\T(A) \to A$ is a $\sf{C}$-morphism.
\item Let $(A_1,f_1)$ and $(A_2,f_2)$ be two algebras for $\T$. A {\em morphism} between $(A_1,f_1)$ and $(A_2,f_2)$ is a $\sf{C}$-morphism $\alpha : A_1 \to A_2$ such that the following square is commutative.
\[
\begin{tikzcd}[column sep = 5pc]
\T(A_1) \arrow[d, "f_1"'] \arrow[r, "\T(\alpha)"] &  \T(A_2) \arrow[d, "f_2"] \\
A_1 \arrow[r, "\alpha"'] & A_2
\end{tikzcd}
\]
\item Let $\Alg(\T)$ be the category whose objects are algebras for $\T$ and whose morphisms are morphisms of algebras.
\end{enumerate}
\end{definition}


\begin{lemma} \label{lem: norm of box a}
If $(A, \Box) \in \mbal$, then $\Box : (A, w_A) \to (A, \|\cdot\|)$ is a weighted set morphism.
\end{lemma}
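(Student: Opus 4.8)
The goal is to show that if $(A,\Box) \in \mbal$, then $\Box$ is a weighted set morphism from $(A, w_A)$ to $(A, \|\cdot\|)$, i.e., that $\|\Box a\| \le w_A(a) = \max\{\|a\|, 1\}$ for every $a \in A$. The plan is to bound $\|\Box a\|$ by controlling $\Box a$ both from above and from below in terms of $\|a\|$, using the modal axioms (M1)--(M5) together with the basic consequences recorded in Remark~\ref{rem: properties of Box} (monotonicity of $\Box$, $\Box 1 = 1$, and $0 \le \Box 0$).

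First I would set $r = \|a\|$, so that $-r \le a \le r$ in $A$. Applying the order-preserving operator $\Box$ (Remark~\ref{rem: properties of Box}) gives $\Box(-r) \le \Box a \le \Box r$. By (M2), $\Box r = r + (1-r)\Box 0$ and $\Box(-r) = -r + (1+r)\Box 0$. Since $0 \le \Box 0$, and recalling $\Box 0 \le \Box 1 = 1$ (so $0 \le \Box 0 \le 1$), I can estimate: from $\Box a \le r + (1-r)\Box 0$ and $0 \le \Box 0 \le 1$ one checks that $r + (1-r)\Box 0 \le \max\{r, 1\}$ (it is a convex combination of $r$ and $1$ when $r \le 1$, and is $\le r$ when $r \ge 1$). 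Similarly $\Box a \ge \Box(-r) = -r + (1+r)\Box 0 \ge -r \ge -\max\{r,1\}$, using $\Box 0 \ge 0$. Hence $-\max\{\|a\|,1\} \le \Box a \le \max\{\|a\|,1\}$, which means $|\Box a| \le \max\{\|a\|,1\}\cdot 1$, and therefore $\|\Box a\| \le \max\{\|a\|,1\} = w_A(a)$.

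A subtle point is whether the inequalities $-r \le a \le r$ in the real-number sense translate to order inequalities in $A$; this holds precisely because $1$ is a strong order unit and $\|a\|$ is defined as the infimum of the $r$ with $|a| \le r\cdot 1$, and that infimum is attained in a bounded archimedean $\ell$-algebra (this is part of the standard Yosida/Gelfand machinery recalled in \S\ref{subsec: Gelfand duality}). Alternatively, one can avoid attainment by running the argument with $r' > \|a\|$ arbitrary and then taking the infimum over such $r'$ at the end; since $w_A$ and $\|\cdot\|$ are continuous in $r'$, the bound $\|\Box a\| \le \max\{r', 1\}$ for all $r' > \|a\|$ yields $\|\Box a\| \le \max\{\|a\|, 1\}$.

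The main obstacle is really just bookkeeping the constant $\Box 0$: one has to notice that (M2) forces $\Box$ to behave on scalars like an affine interpolation anchored at $\Box 0 \in [0,1]$, and then verify the elementary real inequality $r + (1-r)t \le \max\{r,1\}$ for $t \in [0,1]$ (and its mirror for $-r$). No clever idea beyond exploiting monotonicity and (M2) is needed; the whole content is that the $w_A$ weight — the max with $1$ — is exactly large enough to absorb the possible failure of $\Box 0 = 0$ in the non-serial case, which is precisely why $w_A$ rather than $\|\cdot\|$ was chosen in the definition preceding this lemma.
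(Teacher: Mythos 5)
Your proof is correct and follows essentially the same route as the paper's: set $r=\|a\|$, sandwich $\Box a$ between $\Box(-r)$ and $\Box r$ via monotonicity, and bound these scalar values by $\pm\max\{r,1\}$ using (M2) and $0\le\Box 0$. The only (harmless) differences are that you compute $\Box(-r)$ directly from (M2) and the bound $\Box 0\le\Box 1=1$, where the paper instead uses (M4) to get $-\Box r\le\Box(-r)$ and monotonicity to handle the case $r\le 1$; your aside about attainment of the infimum defining $\|a\|$ is also fine, as archimedeanness guarantees $|a|\le\|a\|\cdot 1$.
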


\begin{proof}
Let $0 \le r \in \mathbb{R}$. We first show that $\Box r \le \max\{r,1 \}$.  If $r \le 1$, then $\Box r \le \Box 1 = 1$ by Remark~\ref{rem: properties of Box}. If $1 \le r$, then $\Box r = r + (1-r)\Box 0 \le r$ since $0 \le \Box 0$, again by Remark~\ref{rem: properties of Box}. Therefore, $\Box r \le \max\{r,1\}$.

We next show that $-\Box r \le \Box(-r)$. We have $\Box 0 = \Box (-r + r) = \Box(-r) + \Box r  -\Box 0$, so $0 \le 2\Box 0 = \Box(-r) + \Box r$. Thus, $-\Box r \le \Box(-r)$. 

To finish the proof, let $r = \|a\|$. Then $-r \le a \le r$, so $\Box(-r) \le \Box a \le \Box r$. We have $\Box r \le \max\{ r, 1\}$ and $-\Box r \le \Box(-r)$. Therefore,
\[
-\max\{ \|a\|,1 \} = -\max\{r,1\} \le -\Box r \le \Box(-r) \le \Box a \le \Box r \le \max\{r,1\} = \max\{ \|a\|,1 \}, 
\]
which implies that $\|\Box a\| \le \max\{\|a\|, 1\} = w_A(a)$. Thus, $\Box : (A, w_A) \to (A, \|\cdot\|)$ is a weighted set morphism.
\end{proof}

\begin{lemma} \label{lem: functor M}
There is a covariant functor $\M : \Alg(\H) \to \mbal$ sending $(A, \sigma)$ to $(A, \Box_\sigma)$, where $\Box_\sigma a = \sigma(\Box_a)$ for each $a \in A$, and an $\Alg(\H)$-morphism $\alpha$ to itself.
\end{lemma}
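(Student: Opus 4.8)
The plan is to verify three things: first, that for each algebra $(A,\sigma)$ for $\H$ the operator $\Box_\sigma$ defined by $\Box_\sigma a = \sigma(\Box_a)$ actually satisfies the modal axioms (M1)--(M5), so that $(A,\Box_\sigma)\in\mbal$; second, that an $\Alg(\H)$-morphism $\alpha$, which is by definition a $\bal$-morphism making the relevant square commute, is automatically a $\mbal$-morphism, i.e.\ it preserves the box operators; and third, the routine functoriality check that $\M$ preserves identities and composition, which is immediate since $\M$ leaves morphisms unchanged.

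\medskip

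For the first point, recall from Remark~\ref{rem: Box relations} that the generators $\Box_a = h_A(a)$ of $\H(A)$ satisfy the relations (F1)--(F5), which are exactly the modal axioms with $\Box_a$ in place of $\Box a$. Since $\sigma : \H(A)\to A$ is a $\bal$-morphism, it is an $\mathbb R$-algebra and lattice homomorphism fixing the reals, so applying $\sigma$ to (F1)--(F5) yields precisely (M1)--(M5) for $\Box_\sigma$: for instance, applying $\sigma$ to (F1) gives $\sigma(\Box_{a\wedge b}) = \sigma(\Box_a \wedge \Box_b) = \sigma(\Box_a)\wedge\sigma(\Box_b)$, i.e.\ $\Box_\sigma(a\wedge b) = \Box_\sigma a \wedge \Box_\sigma b$, and similarly for the other four, using that $\sigma$ commutes with $+$, with multiplication, with $(-)^+$, and fixes each $r\in\mathbb R$. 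Hence $(A,\Box_\sigma)\in\mbal$.

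\medskip

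For the second point, let $\alpha : (A,\sigma)\to(B,\tau)$ be an $\Alg(\H)$-morphism, so $\alpha$ is a $\bal$-morphism with $\alpha\circ\sigma = \tau\circ\H(\alpha)$. By Remark~\ref{rem: H on box elements} we have $\H(\alpha)(\Box_a) = \Box_{\alpha(a)}$ for each $a\in A$. Therefore
\[
\alpha(\Box_\sigma a) = \alpha(\sigma(\Box_a)) = \tau(\H(\alpha)(\Box_a)) = \tau(\Box_{\alpha(a)}) = \Box_\tau(\alpha(a)),
\]
so $\alpha$ preserves the modal operators and is thus a $\mbal$-morphism from $(A,\Box_\sigma)$ to $(B,\Box_\tau)$. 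Since $\M$ acts as the identity on morphisms and on the underlying $\bal$-objects, preservation of identities and of composition is trivial, and $\M$ is a well-defined covariant functor. I do not anticipate a genuine obstacle here; the only place that requires a little care is checking that each of (F1)--(F5) really does transform into the corresponding modal axiom under $\sigma$, which reduces entirely to the fact that a $\bal$-morphism preserves all the operations and constants appearing in those identities.
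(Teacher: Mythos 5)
Your proposal is correct and follows essentially the same route as the paper's proof: the verification of (M1)--(M5) by applying the $\bal$-morphism $\sigma$ to the relations (F1)--(F5) of Remark~\ref{rem: Box relations}, and the morphism check via $\alpha\sigma(\Box_a)=\sigma'\H(\alpha)(\Box_a)=\sigma'(\Box_{\alpha(a)})$ using Remark~\ref{rem: H on box elements}, are exactly the paper's argument (the paper merely leaves the first step as "it follows from" the definition and the remark, which you spell out).
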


\begin{proof}
Let $(A, \sigma) \in \Alg(\H)$ and define $\Box_\sigma$ on $A$ by $\Box_\sigma a = \sigma(\Box_a)$. It follows from Definition~\ref{def:mbal} and Remark~\ref{rem: Box relations} that $(A, \Box_\sigma) \in \mbal$. If $\alpha : (A, 
\sigma) \to (A', \sigma')$ is an $\Alg(\H)$-morphism,
\[
\begin{tikzcd}[column sep = 5pc]
\H(A) \arrow[r, "\sigma"] \arrow[d,"\H(\alpha)"'] & A \arrow[d, "\alpha"] \\
\H(A') \arrow[r, "\sigma'"'] & A'
\end{tikzcd}
\]
then
\[
\alpha(\Box_\sigma a) = \alpha\sigma(\Box_a) = \sigma'\H(\alpha)(\Box_a) = \sigma'(\Box_{\alpha(a)}) = \Box_{\sigma'} \alpha(a),
\]
where the second-to-last equality follows from Remark~\ref{rem: H on box elements}. Therefore, $\alpha$ is an $\mbal$-morphism. It is clear that $\M$ preserves identity morphisms and compositions. Thus, $\M$ is a covariant functor.
\end{proof}

\begin{lemma} \label{lem: functor N}
There is a covariant functor $\N : \mbal \to \Alg(\H)$ sending $(A, \Box)$ to $(A, \sigma_\Box)$, where $\sigma_\Box(\Box_a) = \Box a$ for each $a \in A$, and an $\mbal$-morphism $\alpha$ to itself.
\end{lemma}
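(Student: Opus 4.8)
The goal is to construct the functor $\N : \mbal \to \Alg(\H)$ inverse (up to the obvious identifications) to the functor $\M$ of Lemma~\ref{lem: functor N}. The plan is to start with $(A,\Box) \in \mbal$ and produce a $\bal$-morphism $\sigma_\Box : \H(A) \to A$ by invoking the universal properties in the construction of $\H(A)$. First I would apply Lemma~\ref{lem: norm of box a}, which says that $\Box : (A, w_A) \to (A, \|\cdot\|)$ is a weighted set morphism; by the universal property of the free object $F(A)$ on the weighted set $(A, w_A)$ (Theorem~\ref{thm: UMP}), there is a unique $\bal$-morphism $\widetilde{\sigma} : F(A) \to A$ with $\widetilde{\sigma} \circ f_A = \Box$.

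Next I would check that $\widetilde{\sigma}$ kills the ideal $I_A$, i.e.\ $\widetilde{\sigma}(I_A) = 0$. Since $I_A$ is the archimedean $\ell$-ideal of $F(A)$ generated by the elements (a)--(e) of Definition~\ref{def: F(A)}, and $A$ itself is archimedean (so $\{0\}$ is an archimedean $\ell$-ideal of $A$, and the preimage $\widetilde{\sigma}^{-1}(0)$ is an archimedean $\ell$-ideal of $F(A)$), it suffices to verify that each generator (a)--(e) maps to $0$ under $\widetilde{\sigma}$. But $\widetilde{\sigma}(f_A(x)) = \Box x$ for each $x \in A$, so generator (a) maps to $\Box(a \wedge b) - \Box a \wedge \Box b$, which is $0$ by axiom (M1); generator (b) maps to $\Box r - r - (1-r)\Box 0 = 0$ by (M2); and similarly (c), (d), (e) map to $0$ by (M3), (M4), (M5). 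Hence $I_A \subseteq \widetilde{\sigma}^{-1}(0)$, and $\widetilde{\sigma}$ factors through the quotient map $\pi : F(A) \to \H(A) = F(A)/I_A$, yielding a $\bal$-morphism $\sigma_\Box : \H(A) \to A$ with $\sigma_\Box \circ \pi = \widetilde{\sigma}$. Composing with $f_A$ gives $\sigma_\Box(\Box_a) = \sigma_\Box(\pi(f_A(a))) = \widetilde{\sigma}(f_A(a)) = \Box a$, so $(A, \sigma_\Box) \in \Alg(\H)$ and $\sigma_\Box$ has the stated action on the generators $\Box_a$.

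Finally I would check functoriality. For an $\mbal$-morphism $\alpha : (A, \Box) \to (A', \Box')$, I need to show $\alpha : (A, \sigma_\Box) \to (A', \sigma_{\Box'})$ is an $\Alg(\H)$-morphism, i.e.\ $\alpha \circ \sigma_\Box = \sigma_{\Box'} \circ \H(\alpha)$. Both sides are $\bal$-morphisms $\H(A) \to A'$, and since $\{\Box_a \mid a \in A\}$ generates $\H(A)$ (Remark~\ref{rem: Box relations}), it is enough to compare them on the $\Box_a$: the left side gives $\alpha(\sigma_\Box(\Box_a)) = \alpha(\Box a) = \Box'(\alpha(a))$ since $\alpha$ preserves $\Box$, while the right side gives $\sigma_{\Box'}(\H(\alpha)(\Box_a)) = \sigma_{\Box'}(\Box_{\alpha(a)}) = \Box'(\alpha(a))$ using Remark~\ref{rem: H on box elements}. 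These agree, so $\alpha$ is an $\Alg(\H)$-morphism. Preservation of identities and compositions is immediate since $\N$ sends each morphism to itself. The only mildly delicate point is the factorization step: one must use that $\widetilde{\sigma}^{-1}(0)$ is an \emph{archimedean} $\ell$-ideal (because $A \in \bal$ is archimedean), so that it contains the archimedean $\ell$-ideal generated by the generators (a)--(e) once it contains those generators themselves; everything else is a routine check against the axioms (M1)--(M5).
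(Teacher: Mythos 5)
Your proof is correct and follows essentially the same route as the paper: lift $\Box$ to $F(A)$ via the universal property from Lemma~\ref{lem: norm of box a} and Theorem~\ref{thm: UMP}, check the generators of $I_A$ die using (M1)--(M5), factor through the quotient, and verify functoriality on the generators $\Box_a$ via Remark~\ref{rem: H on box elements}. Your explicit observation that $\ker(\widetilde{\sigma})$ is an \emph{archimedean} $\ell$-ideal (so that containing the generators forces it to contain all of $I_A$) is a point the paper leaves implicit, and it is exactly the right justification.
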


\begin{proof}
Since $\Box$ is a weighted set morphism by Lemma~\ref{lem: norm of box a}, there is a $\bal$-morphism $\tau : F(A) \to A$ satisfying $\tau f_A(a) = \Box a$ by Theorem~\ref{thm: UMP}. It is clear from Definitions~\ref{def:mbal}(1) and~\ref{def: F(A)}(1) that $I_A \subseteq \ker(\tau)$, so there is a $\bal$-morphism $\sigma_\Box : \H(A) \to A$ satisfying $\sigma_\Box(\Box_a) = \Box a$. We set $\N(A,\Box) = (A, \sigma_\Box) \in \Alg(\H)$. If $\alpha : (A, \Box) \to (A', \Box')$ is an $\mbal$-morphism, we show that $\alpha$ is an $\Alg(\H)$-morphism. For this we show that the following diagram commutes.
\[
\begin{tikzcd}[column sep = 5pc]
\H(A) \arrow[r, "\sigma_{\Box}"] \arrow[d,"\H(\alpha)"'] & A \arrow[d, "\alpha"] \\
\H(A') \arrow[r, "\sigma_{\Box'}"'] & A'
\end{tikzcd}
\]
By Remark~\ref{rem: H on box elements}, $\H(\alpha)(\Box_a) = \Box_{\alpha(a)}$. Therefore, because $\alpha$ preserves $\Box$, we have $\alpha\sigma_\Box(\Box_a) = \alpha(\Box a) = \Box \alpha(a)$ and $\sigma_{\Box'}\H(\alpha)(\Box_a) = \sigma_{\Box'}(\Box_{\alpha(a)}) = \Box \alpha(a)$. As $\{\Box_a \mid a \in A\}$ generates $\H(A)$, we see that $\alpha \circ \sigma_\Box = \sigma_{\Box'} \circ \H(\alpha)$, so $\alpha$ is an $\Alg(\H)$-morphism. It is clear that $\N$ preserves identity morphisms and compositions. Thus, $\N$ is a covariant functor.
\end{proof}

\begin{theorem} \label{thm: mbal = Alg(H)}
The functors $\M$ and $\N$ yield an isomorphism of categories between $\Alg(\H)$ and $\mbal$.
\end{theorem}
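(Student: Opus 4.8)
The plan is to show that $\M$ and $\N$ are mutually inverse, which combined with Lemmas~\ref{lem: functor M} and~\ref{lem: functor N} gives the isomorphism of categories. Since both functors act as the identity on morphisms, the only thing to check is that they are mutually inverse on objects; once that is established, functoriality (already proven in the two lemmas) and compatibility with morphisms are automatic, because an $\Alg(\H)$-morphism and an $\mbal$-morphism between corresponding objects are literally the same underlying $\bal$-morphism.

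First I would verify $\M \circ \N = \mathrm{id}_{\mbal}$. Take $(A, \Box) \in \mbal$. Then $\N(A,\Box) = (A, \sigma_\Box)$ where $\sigma_\Box(\Box_a) = \Box a$, and $\M(A, \sigma_\Box) = (A, \Box_{\sigma_\Box})$ where $\Box_{\sigma_\Box} a = \sigma_\Box(\Box_a) = \Box a$. So $\Box_{\sigma_\Box} = \Box$, and $\M\N(A,\Box) = (A,\Box)$ on the nose.

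Next I would verify $\N \circ \M = \mathrm{id}_{\Alg(\H)}$. Take $(A, \sigma) \in \Alg(\H)$. Then $\M(A,\sigma) = (A, \Box_\sigma)$ with $\Box_\sigma a = \sigma(\Box_a)$, and $\N(A, \Box_\sigma) = (A, \sigma_{\Box_\sigma})$ where $\sigma_{\Box_\sigma}$ is the unique $\bal$-morphism $\H(A) \to A$ satisfying $\sigma_{\Box_\sigma}(\Box_a) = \Box_\sigma a = \sigma(\Box_a)$. Since $\sigma$ itself is a $\bal$-morphism $\H(A) \to A$ with $\sigma(\Box_a) = \sigma(\Box_a)$, and since $\{\Box_a \mid a \in A\}$ generates $\H(A)$ in $\bal$ (Remark~\ref{rem: Box relations}), uniqueness forces $\sigma_{\Box_\sigma} = \sigma$. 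Hence $\N\M(A,\sigma) = (A,\sigma)$.

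There is no serious obstacle here: the substance of the theorem has already been absorbed into Lemmas~\ref{lem: norm of box a}, \ref{lem: functor M}, and \ref{lem: functor N}. The one point that deserves a word is why $\sigma_{\Box_\sigma}$ is \emph{well-defined} and equals $\sigma$ — this rests on the universal property of $F(A)$ over $(A, w_A)$ together with the fact that $I_A \subseteq \ker$ of the relevant map, both of which are exactly what was checked in the proof of Lemma~\ref{lem: functor N}; the identity $\sigma_{\Box_\sigma} = \sigma$ then follows purely from the generation statement. I would conclude by remarking that since $\M$ and $\N$ fix morphisms and are mutually inverse on objects, they constitute an isomorphism of categories.
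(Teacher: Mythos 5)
Your proposal is correct and follows essentially the same route as the paper: both verify $\M\N = 1_{\mbal}$ and $\N\M = 1_{\Alg(\H)}$ by direct computation on objects, using that $\{\Box_a \mid a \in A\}$ generates $\H(A)$ to conclude $\sigma_{\Box_\sigma} = \sigma$. Your extra remark about why $\sigma_{\Box_\sigma}$ is well defined is a sensible elaboration of what the paper leaves implicit, but adds nothing substantively different.
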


\begin{proof}
Let $(A, \sigma) \in \Alg(\H)$. Then $\M(A, \sigma) = (A, \Box_\sigma)$. Therefore, $\N\M(A,\sigma) = (A, \sigma_{\Box_\sigma})$ where $\sigma_{\Box_\sigma}(\Box_a) = \Box_{\sigma} a = \sigma(\Box_a)$. Thus, $\sigma_{\Box_\sigma} = \sigma$, and so $\N\M = 1_{\Alg(\H)}$. 

Next, let $(A, \Box) \in \mbal$. Then $\N(A, \Box) = (A, \sigma_\Box)$. Therefore, $\M\N(A,\Box) = (A, \Box_{\sigma_\Box})$. But $\Box_{\sigma_\Box}a = \sigma_\Box(\Box_a) = \Box a$ by the definition of $\sigma_\Box$, so $\Box_{\sigma_\Box} = \Box$. Thus, $\M\N = 1_{\mbal}$. Consequently, $\M$ and $\N$ yield an isomorphism between $\Alg(\H)$ and $\mbal$.
\end{proof}

\section{$\H$ and the Vietoris endofunctor} \label{sec : H and Vietoris}

In this section we relate $\H$ to the Vietoris endofunctor $\V:\KHaus\to\KHaus$ by showing that the Yosida space $Y_{\H(A)}$ for $A \in \bal$ is homeomorphic to $\V(Y_A)$.

Let $X \in \KHaus$. We recall that the Vietoris space $\V(X)$ is the set of closed subsets of $X$, topologized as follows. If $U$ is an open subset of $X$, let
\begin{align*}
\Box_U &= \{ F \in \V(X) \mid F \subseteq U\}, \\
\Diamond_U &= \{ F \in \V(X) \mid F \cap U \ne \varnothing\}.
\end{align*}
The Vietoris topology on $\V(X)$ is the topology with the subbasis 
\[
\{ \Box_U \cap \Diamond_V \mid U,V \textrm{ open in }X \}.
\] 
We extend $\V$ to a functor as follows. If $\varphi : X \to Y$ is a continuous function between compact Hausdorff spaces, define $\V(\varphi) : \V(X) \to \V(Y)$ by $\V(\varphi)(F) = \varphi(F)$, the image of $F$ under $\varphi$. It is well known that $\V(\varphi)$ is a well-defined continuous map.

\begin{remark}
The Vietoris space of $X$ is usually defined as the space of nonempty closed subsets of $X$ (see, e.g., \cite[p.~120]{Eng89}). However, we follow \cite[p.~111]{Joh82} in including $\varnothing$ in $\V(X)$. This is necessary for our considerations since the continuous relation $R$ on $X$ may not be serial, and hence there may be $x \in X$ with $R[x]=\varnothing$. Therefore, $\rho_R(x)=\varnothing$, and we need $\varnothing\in\V(X)$ for $\rho_R$ to be well defined. However, in Section~\ref{sec: serial case} we will consider $\V^*(X) = \V(X) \setminus \{\varnothing\}$ and relate it to the full subcategory of $\mbal$ corresponding to those $(X,R)$ where $R$ is a serial relation. This subcategory will be characterized by the identity $\Box 0 = 0$.
\end{remark}

\begin{lemma} \label{lem: g is a weighted set map}
Let $A \in \bal$. Define $g_A : A \to C(\V Y_A)$ by
\[
g_A(a)(F) = \left\{ \begin{array}{ll} \inf \zeta_A(a)(F) & \textrm{if } F \ne \varnothing;\\ 1 & \textrm{if } F = \varnothing.\end{array}\right.
\]
Then $g_A : (A, w_A) \to (C(\V Y_A), \|\cdot\|)$ is a well-defined weighted set morphism.
\end{lemma}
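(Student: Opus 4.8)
The plan is to check, in turn, three things: that each $g_A(a)$ is a well-defined real-valued function on $\V Y_A$, that it is continuous, and that $\|g_A(a)\|\le w_A(a)$. The first is immediate from compactness: for $a\in A$ we have $\zeta_A(a)\in C(Y_A)$, and every nonempty $F\in\V Y_A$ is a closed, hence compact, subset of $Y_A$, so $\zeta_A(a)(F)$ is a nonempty compact subset of $\mathbb R$ and its infimum is attained; for $F=\varnothing$ the value has been fixed to $1$. (In particular $\inf$ may be read as $\min$ everywhere below.)

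The heart of the proof is continuity of $g_A(a)$, and the first move will be to dispose of the empty set: $\{\varnothing\}$ is the subbasic open set $\Box_\varnothing$, and it is also closed since it is the complement of the subbasic open set $\Diamond_{Y_A}=\{F\in\V Y_A\mid F\ne\varnothing\}$; hence $\{\varnothing\}$ is clopen. So it will suffice to show $g_A(a)$ continuous on the clopen subspace $\V Y_A\setminus\{\varnothing\}$, where $g_A(a)(F)=\min\zeta_A(a)(F)$, and there I would run the standard semicontinuity argument for the Vietoris topology. Fix a nonempty closed $F_0$, put $c=\min\zeta_A(a)(F_0)$, and let $\varepsilon>0$. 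Taking $U=\zeta_A(a)^{-1}(c-\varepsilon,\infty)$ and $V=\zeta_A(a)^{-1}(-\infty,c+\varepsilon)$, one gets $F_0\in\Box_U\cap\Diamond_V$ (the first inclusion because $\zeta_A(a)\ge c$ on $F_0$, the second because $c$ is attained on $F_0$); and for every nonempty closed $F\in\Box_U\cap\Diamond_V$, the inclusion $F\subseteq U$ forces $\min\zeta_A(a)(F)>c-\varepsilon$ while $F\cap V\ne\varnothing$ forces $\min\zeta_A(a)(F)<c+\varepsilon$, so $|g_A(a)(F)-c|<\varepsilon$. This gives continuity at $F_0$, and since $\V Y_A$ is compact Hausdorff it follows that $g_A(a)\in C(\V Y_A)$.

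For the weighted-set condition I would simply estimate the sup-norm of $g_A(a)$: at $\varnothing$ the value is $1$, and at a nonempty $F$ we have $|g_A(a)(F)|=|\min\zeta_A(a)(F)|\le\|\zeta_A(a)\|\le\|a\|$, the last inequality because $\zeta_A$ is a $\bal$-morphism. Taking the supremum over $F\in\V Y_A$ yields $\|g_A(a)\|\le\max\{\|a\|,1\}=w_A(a)$, so $g_A$ is a weighted set morphism. The only step carrying real content is the continuity claim, and the one subtlety there is precisely to separate off $\varnothing$ as an isolated point first, so that the classical "min is continuous for the Vietoris topology" argument is applied only to genuinely nonempty compact sets.
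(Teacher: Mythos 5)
Your proof is correct and follows essentially the same route as the paper's: both reduce everything to the attainment of the infimum on compact nonempty $F$, establish continuity via the subbasic opens $\Box_{\zeta_A(a)^{-1}(r,\infty)}\cap\Diamond_{\zeta_A(a)^{-1}(-\infty,s)}$, and obtain the norm bound from $-\|a\|\le a\le\|a\|$ together with the value $1$ at $\varnothing$. The only cosmetic difference is that you isolate $\varnothing$ as a clopen point up front and then argue continuity pointwise at each nonempty $F_0$, whereas the paper computes the preimage $g_A(a)^{-1}(r,s)$ globally and adjoins $\Box_\varnothing$ in the case $1\in(r,s)$.
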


\begin{proof}
To simplify notation we write $g$ for $g_A$. To see that $g$ is well defined it is sufficient to show that $g(a)$ is continuous for each $a \in A$. Let $r,s \in \mathbb{R}$ with $r < s$. We show that
\[
g(a)^{-1}(r, s) = \left\{ \begin{array}{ll} \Box_{\zeta_A(a)^{-1}(r, \infty)} \cap \Diamond_{\zeta_A(a)^{-1}(-\infty, s) }& \textrm{if } 1 \notin (r, s) \\ 
(\Box_{\zeta_A(a)^{-1}(r, \infty) }\cap \Diamond_{\zeta_A(a)^{-1}(-\infty, s)}) \cup \Box_\varnothing & \textrm{if } 1 \in (r, s).\end{array} \right.
\]
Suppose that $1 \notin (r, s)$. Then $g(a)(F) \in (r, s)$ implies that $F \ne \varnothing$. Therefore, since $F$ is compact and hence $\zeta_A(a)$ attains its infimum on $F$, we have
\begin{eqnarray*}
F \in g(a)^{-1}(r, s) & \textrm{ iff } & r < \inf \zeta_A(a)(F) < s \\
& \textrm{ iff } & r < \min \zeta_A(a)(F) < s \\
& \textrm{ iff } & F \in \Box_{\zeta_A(a)^{-1}(r, \infty)} \cap \Diamond_{\zeta_A(a)^{-1}(-\infty, s)}.
\end{eqnarray*}
On the other hand, if $1 \in (r, s)$, then $\varnothing \in g(a)^{-1}(r, s)$. Therefore, since $\Box_\varnothing = \{\varnothing\}$, the calculation above yields the second case. Thus, $g(a)$ is continuous.

It is left to show that $g$ is a weighted set morphism. Let $a \in A$. Then $w_A(a) = \max\{\|a\|,1\}$. Suppose that $\|a\| = r$. Then $-r \le a \le r$. If $F$ is nonempty, then $-r \le \inf \zeta_A(a)(F) \le r$, so $|\inf \zeta_A(a)(F)| \le r$. Also, $g(a)(\varnothing) = 1$.  Therefore,
\begin{align*}
\|g(a)\| &= \sup \{ |g(a)(F)|\mid F \in \V(Y_A)\} = \sup \{ \{|\inf \zeta_A(a)(F)| \mid F \ne \varnothing\}  \cup  \{1\} \} \\
& = \max\{\sup \{ |\inf \zeta_A(a)(F)| \mid F \ne \varnothing\}, 1\} \le \max\{r, 1\}  = w_A(a).
\end{align*}
Thus, $g : (A, w_A) \to (C(\V Y_A), \|\cdot\|)$ is a weighted set morphism.
\end{proof}

\begin{remark} \label{rem: identifications}
In the proof of Lemma~\ref{lem: map from HA to C(VYA)} 
we identify $X$ with $Y_{C(X)}$ via the homeomorphism $\varepsilon_{X} : X \to Y_{C(X)}$ given in Section~\ref{subsec: Gelfand duality}. We also identify $Y_{A}$ with $\hom_{\bal} (A, \mathbb{R})$ as follows. If $M \in Y_{A}$, then it is well known that $A/M \cong \mathbb{R}$ (see, e.g., \cite[Cor.~2.7]{HJ61}), so there is a $\bal$-morphism $A \to \mathbb{R}$ sending $a \in A$ to $r \in \mathbb{R}$ iff $a-r \in M$.
Conversely, $\rho \in \hom_{\bal}(A, \mathbb{R})$ goes to $\ker(\rho) \in Y_A$. 
\end{remark}

\begin{lemma} \label{lem: map from HA to C(VYA)}
There is a $($unique$)$ $\bal$-morphism $\tau_A : F(A) \to C(\V Y_A)$ satisfying $\tau_A \circ f_A = g_A$, the image of $\tau_A$ is uniformly dense in $C(\V Y_A)$, and $\ker(\tau_A)$ contains $I_A$. Therefore, there is a $($unique$)$ $\bal$-morphism $\eta_A : \H(A) \to C(\V Y_A)$ satisfying $\eta_A \circ h_A = g_A$ and whose image is uniformly dense in $C(\V Y_A)$.
\[
\begin{tikzcd}[column sep = 5pc]
& F(A) \arrow[d, "\pi"'] \arrow[dd, bend left = 50, "\tau_A"] \\ 
A \arrow[r, "h_A"] \arrow[dr, "g_A"'] \arrow[ur, "f_A"] & \H(A) \arrow[d, "\eta_A"'] \\
& C(\V Y_A)
\end{tikzcd}
\]
\end{lemma}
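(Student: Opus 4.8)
The plan is to build $\tau_A$ using the universal property of the free object $F(A)$ (Theorem~\ref{thm: UMP}), then check that $I_A$ sits inside its kernel, and finally read off $\eta_A$ and the density statement. Concretely, Lemma~\ref{lem: g is a weighted set map} tells us that $g_A : (A, w_A) \to (C(\V Y_A), \|\cdot\|)$ is a weighted set morphism; since $F(A)$ is the free $\bal$-object on $(A, w_A)$ with associated map $f_A$, there is a unique $\bal$-morphism $\tau_A : F(A) \to C(\V Y_A)$ with $\tau_A \circ f_A = g_A$. That gives existence and uniqueness of $\tau_A$ immediately.

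Next I would verify $I_A \subseteq \ker(\tau_A)$. Since $\ker(\tau_A)$ is an $\ell$-ideal of $F(A)$ and $F(A)/\ker(\tau_A)$ embeds (via the induced map) into $C(\V Y_A) \in \ubal$, the quotient is archimedean, so $\ker(\tau_A)$ is an archimedean $\ell$-ideal. Hence it suffices to check that the five families of generators (a)--(e) of $I_A$ from Definition~\ref{def: F(A)} map to $0$ under $\tau_A$; equivalently, that $g_A$ satisfies the relations (F1)--(F5) of Remark~\ref{rem: Box relations} with $\Box_a$ replaced by $g_A(a)$. This is a pointwise computation on $\V Y_A$: at a nonempty $F$, $g_A(a)(F) = \inf \zeta_A(a)(F)$ and one checks that $\inf$ over $F$ of $\zeta_A$-images behaves like a modal operator $\Box_{R}$ for $R$ the membership relation $\{(F,M) : M \in F\}$ — i.e. $\inf(\zeta_A(a)\wedge\zeta_A(b)) = \inf\zeta_A(a)\wedge\inf\zeta_A(b)$ is false in general but $g_A(a\wedge b) = g_A(a)\wedge g_A(b)$ does hold since $\zeta_A(a\wedge b) = \zeta_A(a)\wedge\zeta_A(b)$ as functions and the infimum of a pointwise minimum of two continuous functions on a compact set equals the minimum of the two infima... actually here it is cleaner: $\zeta_A$ is an $\ell$-algebra morphism, so $\zeta_A(a \wedge b) = \zeta_A(a) \wedge \zeta_A(b)$, and then $\inf$ of this equals $\min\{\inf\zeta_A(a), \inf\zeta_A(b)\}$ — wait, that is exactly the statement $\inf(u\wedge v)=\inf u\wedge\inf v$ for continuous $u,v$ on a compact space, which is true. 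At $F = \varnothing$ both sides equal $1$, matching the constants in (F2),(F4),(F5). So each generator of type (a)--(e) evaluates to the zero function; I would spell out one or two cases (say (a) and (b)) and note the rest are analogous, since the paper already adopted that convention in the proof that $\H$ is a functor.

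With $I_A \subseteq \ker(\tau_A)$ established, the quotient map $\pi : F(A) \to \H(A) = F(A)/I_A$ factors $\tau_A$: there is a unique $\bal$-morphism $\eta_A : \H(A) \to C(\V Y_A)$ with $\eta_A \circ \pi = \tau_A$, hence $\eta_A \circ h_A = \eta_A \circ \pi \circ f_A = \tau_A \circ f_A = g_A$. Uniqueness of $\eta_A$ with $\eta_A \circ h_A = g_A$ follows because $h_A[A] = \{\Box_a : a \in A\}$ generates $\H(A)$ in $\bal$ (Remark~\ref{rem: Box relations}). Finally, the image of $\eta_A$ contains all the $g_A(a)$, and I claim $\{g_A(a) : a \in A\}$ generates a uniformly dense $\ell$-subalgebra of $C(\V Y_A)$: it is enough, by Stone--Weierstrass, that this set separates the points of $\V Y_A$. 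Two distinct closed sets $F_1 \ne F_2$ of $Y_A$ differ by some point, say $M \in F_1 \setminus F_2$; since $Y_A$ is compact Hausdorff and $\zeta_A[A]$ separates points of $Y_A$ (stated in \S\ref{subsec: Gelfand duality}), one can produce $a \in A$ with $\zeta_A(a)$ small near $M$ and bounded below away from $F_2$ — more precisely, pick $a$ with $\zeta_A(a)(M)=0$ and $\zeta_A(a) \ge 1$ on $F_2$ (possible: separate $\{M\}$ from the closed set $F_2$ by a continuous $[0,1]$-valued function and pull back through $\zeta_A$, using uniform density), giving $g_A(a)(F_1) = 0 < 1 \le g_A(a)(F_2)$; the empty set is separated from any nonempty $F$ since $g_A(a)(\varnothing)=1$ while $g_A(\mathbf{2}\cdot 1)(F)=2$ for nonempty $F$. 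Hence the subalgebra generated by the image of $\eta_A$ is uniformly dense, so $\tau_A$ and $\eta_A$ both have uniformly dense image. The main obstacle is the point-separation argument for density: one must be careful that the relevant separating functions lie in (the image of) $\zeta_A$ rather than merely in $C(Y_A)$, which is handled by uniform density of $\zeta_A[A]$ together with the fact that $g_A$ only depends on $\zeta_A(a)$, so working up to uniform approximation is harmless.
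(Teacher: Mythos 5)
Your proposal is correct and follows the same overall structure as the paper: obtain $\tau_A$ from the universal property of $F(A)$ applied to the weighted set morphism $g_A$, check that the five families of generators of $I_A$ die under $\tau_A$ by the pointwise computation on $\V Y_A$ (your identity $\inf_F(u\wedge v)=\inf_F u\wedge\inf_F v$ is indeed true for any nonempty $F$, despite your momentary hesitation, and it is exactly what the paper uses for case (a)), and then factor through the quotient. Your explicit observation that $\ker(\tau_A)$ is an \emph{archimedean} $\ell$-ideal --- because $F(A)/\ker(\tau_A)$ embeds in $C(\V Y_A)$ --- is a point the paper leaves implicit but which is genuinely needed, since $I_A$ is the archimedean $\ell$-ideal generated by those elements. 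The only real divergence is in the density step: you apply Stone--Weierstrass directly to the image of $\eta_A$ by separating points of $\V(Y_A)$ with the functions $g_A(a)$, whereas the paper shows $\Y(\tau_A)$ is injective and invokes Lemma~\ref{lem: bal morphism properties}(2); these are two packagings of the same Urysohn-plus-$1/3$-approximation argument (and the same handling of $\varnothing$ via the constants), the paper's version having the side benefit that the explicit description of the homomorphisms $\rho_F$ is reused in the proof of Theorem~\ref{thm: psi_A is 1-1}.
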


\begin{proof}
The existence and uniqueness of $\tau_A$ follows from Lemma~\ref{lem: g is a weighted set map} and Theorem~\ref{thm: UMP}. To show that the image of $\tau_A$ is uniformly dense, by Lemma~\ref{lem: bal morphism properties}(2) it suffices to show that $\Y(\tau_A) : Y_{C(\V Y_A)} \to Y_{F(A)}$ is 1-1. We may identify $Y_{F(A)}$ with $\hom_{\bal}(F(A), \mathbb{R})$ and $Y_{C(\V Y_A)}$ with $\V(Y_A)$ by Remark~\ref{rem: identifications}.
Under these identifications, if $F \in \V Y_A$ we let $\rho_F \in \hom_{\bal}(F(A), \mathbb{R})$ be the corresponding homomorphism. For $a \in A$ and $r \in \mathbb{R}$ we have
\begin{eqnarray*}
\rho_F(f_A(a)) = r &\textrm{iff} & f_A(a) - r \in \Y(\tau_A)(\varepsilon_{\V Y_A}(F)) \\
&\textrm{iff} &f_A(a) - r \in \tau_A^{-1}(\varepsilon_{\V Y_A}(F) )\\
&\textrm{iff} &\tau_Af_A(a) - r \in \varepsilon_{\V Y_A}(F) \\
&\textrm{iff} & \tau_Af_A(a)(F) = r \\
&\textrm{iff} & g_A(a)(F) = r. 
\end{eqnarray*}
Therefore, $\rho_F$ satisfies $\rho_F(f_A(a)) = \inf \zeta_A(a)(F)$ if $F \ne \varnothing$, and $\rho_\varnothing$ is the function sending each $f_A(a)$ to 1.
To see that $\Y(\tau_A)$ is 1-1, suppose that $C \ne D$. If one of $C, D$ is empty, say $C = \varnothing$, then $\rho_C f_A(0) = 1$ and $\rho_D f_A(0) = \inf \zeta_A(0)(D) = 0$ since $D$ is nonempty. Therefore, $\rho_C \ne \rho_D$.  If $C, D \ne \varnothing$, without loss of generality we may assume that $C \not\subseteq D$. Then there is $y \in Y_A$ with $y \in C$ and $y \notin D$. Since $Y_A$ is compact Hausdorff, there is $b \in C(Y_A)$ with $0 \le b \le 1$, $b(D) = \{1\}$ and $b(y) = 0$. Because $\zeta_A[A]$ is uniformly dense in $C(Y_A)$, there is $a \in A$ with $\|b - \zeta_A(a)\| < 1/3$. Therefore, $\inf \zeta_A(a)(D) \ge 2/3$ and $\inf \zeta_A(a)(C) \le 1/3$. This shows that $\rho_C f_A(a) \ne \rho_D f_A(a)$, so $\rho_C \ne \rho_D$. Thus, $\Y(\tau_A)$ is 1-1, and hence the image of $\tau_A : F(A) \to C(\V Y_A)$ is uniformly dense.

To show that $I_A \subseteq \ker(\tau_A)$, it is sufficient to show that $\ker(\tau_A)$ contains all five classes of generators of $I_A$. Because the proof is similar to that of \cite[Lem.~3.8]{BCM20a}, we only demonstrate~(a).

Let $a,b \in A$. We have 
\[
\tau_A(f_A(a \wedge b) - f_A(a) \wedge f_A(b)) = \tau_A f_A(a \wedge b) - \tau_A f_A(a) \wedge \tau_A f_A(b) = g_A({a \wedge b}) - g_A(a) \wedge g_A(b). 
\]
Therefore, we need to prove that $g_A({{a \wedge b}}) = g_A(a) \wedge g_A(b)$. Both sides send $\varnothing$ to $1$. Suppose that $F \in \V(Y_A)$ is nonempty. Then
\begin{align*}
g_A({a\wedge b})(F) &= \inf (\zeta_A(a) \wedge \zeta_A(b))(F) = \min (\zeta_A(a) \wedge \zeta_A(b))(F) \\
&= \min \{ (\zeta_A(a) \wedge \zeta_A(b))(x) \mid x \in F\}\\
&=  \min\{ \min\{\zeta_A(a)(x), \zeta_A(b)(x)\} \mid x \in F\} \\
&= \min \{ \min \zeta_A(a)(F), \min \zeta_A(b)(F) \} \\
&= (g_A(a) \wedge g_A(b))(F).
\end{align*}
Thus,  $g_A({a \wedge b}) = g_A(a) \wedge g_A(b)$.
%
%
%
%
\end{proof}

We next show that $\eta_A$ is 1-1. For this we require a technical result, 
which is an analogue of \cite[Prop.~4.8]{BCM20a}.

\begin{definition}
Let $A \in \bal$.
\begin{enumerate}[$(1)$]
\item If $x \in Y_{\H(A)}$, set $\Box^{-1}x = \{ a \in A \mid \Box_a \in x\}$.
\item If $S \subseteq A$, set $S^+ = \{ s \in S \mid 0 \le s\}$.
\item Define a binary relation $R^\Box \subseteq Y_{\H(A)} \times Y_A$ by setting $x R^\Box y$ if $y^+ \subseteq \Box^{-1}x$ for each $x \in Y_{\H(A)}$ and $y \in Y_A$. 
\end{enumerate}
\end{definition}

\begin{proposition} \label{prop: difficult}
Let $A \in \bal$ and $x \in Y_{\H(A)}$. Then $(\Box^{-1} x)^+ = \bigcup \{ y^+ \mid y \in Y_A, x R^\Box y \}$.
\end{proposition}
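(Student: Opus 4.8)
The inclusion $\bigcup\{y^+ \mid y \in Y_A,\ xR^\Box y\} \subseteq (\Box^{-1}x)^+$ is immediate from the definition of $R^\Box$: if $xR^\Box y$ then $y^+ \subseteq \Box^{-1}x$, and elements of $y^+$ are nonnegative, so $y^+ \subseteq (\Box^{-1}x)^+$. The substance is the reverse inclusion. So fix $x \in Y_{\H(A)}$ and a nonnegative $a \in \Box^{-1}x$ (that is, $0 \le a$ and $\Box_a \in x$), and I need to produce a maximal $\ell$-ideal $y$ of $A$ with $a \in y$ and $y^+ \subseteq \Box^{-1}x$.

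The plan is to work on the algebraic side, via the identification of $Y_A$ with $\hom_{\bal}(A,\mathbb{R})$ from Remark~\ref{rem: identifications}: I must find $\rho \in \hom_{\bal}(A,\mathbb{R})$ with $\rho(a) = 0$ and such that $\rho(b) \ge 0 \Rightarrow \Box_b \in x$ for all $b$. The natural candidate is to build $\rho$ from the quotient $\H(A)/x \cong \mathbb{R}$: composing $h_A : A \to \H(A)$ with the quotient gives a $\bal$-morphism $q : A \to \mathbb{R}$, $q(b) = $ the real number representing $\Box_b \bmod x$. However $q$ need not satisfy $q(a) = 0$ in general, and $q(b) \ge 0$ only says $\Box_b \ge $ something nonnegative mod $x$, not $\Box_b \in x$. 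So $q$ alone is not the answer. Instead, the argument should proceed by a compactness/Zorn argument: consider the set of maximal $\ell$-ideals $y$ containing $a$ and show that if \emph{none} of them satisfies $y^+ \subseteq \Box^{-1}x$, one derives a contradiction using the relations (F1)--(F5) and the fact that $x$ is a maximal $\ell$-ideal (equivalently, $\H(A)/x = \mathbb{R}$). Concretely: if for each $y \ni a$ there is $b_y \in y^+$ with $\Box_{b_y} \notin x$, then since $\H(A)/x = \mathbb{R}$ and $\Box_{b_y} \ge 0$ (Remark~\ref{rem: properties of Box}), we get $\Box_{b_y} + x = r_y + x$ for some $r_y > 0$; one wants to combine finitely many such $b_y$ (using that $Z_\ell(a) = \{y \mid a \in y\}$ is compact in $Y_A$ and the $b_y$ give an open cover of it via $\{y' : b_y \notin y'\} = Y_A \setminus Z_\ell(b_y)$, noting $y \notin Z_\ell(b_y)$ since $b_y$ would be $\ge 0$ and in $y$ only if... wait, $b_y \in y$, so actually $y \in Z_\ell(b_y)$) — I need to be careful here and instead cover the complement or use $a$-translates. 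The cleanest route: put $c = b_{y_1} \wedge \cdots \wedge b_{y_n} \vee$ (something) or use that $a + \sum \lambda_i b_{y_i}$ is invertible/bounded below to show $a$ generates the unit ideal together with the $b_{y_i}$, contradicting $a \in$ some maximal ideal; then push through $\Box$ using (F1), (F4), (F5) to land the contradiction inside $x$.

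I expect the main obstacle to be exactly this finitary combination step: translating "every maximal $\ell$-ideal over $a$ fails the condition" into a single finite witness, and then verifying that applying $\Box$ (i.e. passing from $b$'s to $\Box_b$'s via the relations in Remark~\ref{rem: Box relations}) preserves the contradiction modulo $x$. The meet axiom (F1) lets one replace a finite family $\{b_i\}$ by the single element $b = b_1 \wedge \cdots \wedge b_n$ with $\Box_b = \bigwedge \Box_{b_i}$, and $b \ge 0$ lies in each $y_i$; the delicate point is choosing the $y_i$ so that $b$ is forced to be $0$ at every point of $Z_\ell(a)$ while $\Box_b$ is bounded away from $0$ mod $x$, using the Yosida representation $\zeta_A$ and compactness of $Z_\ell(a)$, then invoking axiom (M2)/(F2) together with boundedness to contradict archimedeanness in $\H(A)/x$. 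Since this mirrors \cite[Prop.~4.8]{BCM20a}, I would model the bookkeeping on that argument, substituting $\H(A)$, the generators $\Box_a$, and the relations (F1)--(F5) for the corresponding modal-algebra data there.
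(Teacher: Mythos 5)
Your proposal handles the easy inclusion correctly and correctly identifies the reverse inclusion as an existence lemma to be established by contradiction together with compactness of $Z_\ell(a)$. That is indeed the shape of the argument the paper relies on — note that the paper gives no details here either, stating only that the proof is that of \cite[Prop.~4.8]{BCM20a} with $\Box a$ replaced by $\Box_a$ and $R_\Box$ by $R^\Box$. But as written your attempt has a genuine gap at precisely the step you flag. Assuming that every $y \in Z_\ell(a)$ admits $b_y \in y^+$ with $\Box_{b_y} \notin x$, the sets $Y_A \setminus Z_\ell(b_y)$ do \emph{not} cover $Z_\ell(a)$: as you yourself observe, $b_y \in y$ forces $y \in Z_\ell(b_y)$, so each proposed open set misses the very point it was chosen for. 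None of the alternatives you then list (covering the complement, $a$-translates, invertibility of $a + \sum \lambda_i b_{y_i}$) is carried out or even precisely formulated. What is missing is (i) a correct open cover of the compact set $Z_\ell(a)$ — the natural candidates are sublevel sets $\{z \in Y_A : \zeta_A(b_y)(z) < \varepsilon_y\}$, which do contain $y$, with $\varepsilon_y$ calibrated against the strictly positive value of $\Box_{b_y}$ modulo $x$ — and (ii) the verification that the element obtained by rescaling and combining finitely many $b_{y_i}$ via (F5) and (F1), compared with $a$ both on and off $Z_\ell(a)$ and pushed through (F4) and the monotonicity of $b \mapsto \Box_b$, contradicts $\Box_a \in x$. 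Deferring all of this to ``model the bookkeeping on \cite[Prop.~4.8]{BCM20a}'' leaves the entire mathematical content of the proposition unproved.

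A smaller but real slip: in translating $xR^\Box y$ to homomorphisms you require ``$\rho(b) \ge 0 \Rightarrow \Box_b \in x$ for all $b$,'' which is false as stated (take $b = 1$: then $\rho(b) = 1 \ge 0$ but $\Box_1 = 1 \notin x$). Since $y^+$ consists of the nonnegative elements \emph{of} $y = \ker\rho$, the correct condition is that $b \ge 0$ and $\rho(b) = 0$ together imply $\Box_b \in x$. You use the correct form later, so this does not propagate, but it should be fixed.
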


\begin{proof}
The proof is the same as that of \cite[Prop.~4.8]{BCM20a} after replacing $\Box a$ with $\Box_a$ and $R_\Box$ with $R^\Box$. 
\end{proof}

\begin{lemma} \label{lem: Box0}
Let $\rho : \H(A) \to \mathbb{R}$ be a $\bal$-morphism.
\begin{enumerate}[$(1)$]
\item $\rho(\Box_0) \in \{0,1\}$.
\item If $\rho(\Box_0) =1$, then $\rho(\Box_a) = 1$ for each $a \in A$.
\end{enumerate}
\end{lemma}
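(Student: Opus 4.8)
The plan is to exploit the relation (F2), which says $\Box_r = r + (1-r)\Box_0$ inside $\H(A)$, together with (F5), $\Box_{ra} = \Box_r\Box_a$ for $r \ge 0$. First I would apply $\rho$ to the instance of (F5) with $r = 0$: since $0\cdot a = 0$ we get $\Box_0 = \Box_0 \Box_a$ in $\H(A)$, hence $\rho(\Box_0) = \rho(\Box_0)\rho(\Box_a)$ for every $a \in A$. Taking $a = 0$ gives $\rho(\Box_0) = \rho(\Box_0)^2$, and since $\mathbb{R}$ has no nontrivial idempotents this forces $\rho(\Box_0) \in \{0,1\}$, proving (1).

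For (2), suppose $\rho(\Box_0) = 1$. Then from the identity $\rho(\Box_0) = \rho(\Box_0)\rho(\Box_a)$ just derived, substituting $\rho(\Box_0)=1$ yields $1 = \rho(\Box_a)$ for every $a \in A$, which is exactly the desired conclusion. (Alternatively one can read it off (F2): $\rho(\Box_r) = r + (1-r)\rho(\Box_0)$, so if $\rho(\Box_0)=1$ then $\rho(\Box_r) = 1$ for all scalars $r$; but the argument via (F5) already covers all $a$, not just scalars, so this remark is not needed.)

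I expect no real obstacle here; the only point requiring a moment's care is making sure the instance $0\cdot a = 0$ of axiom (F5) is legitimate — it is, since $0 \ge 0$ — and that $\rho$, being a $\bal$-morphism into $\mathbb{R}$, is in particular a ring homomorphism so it is multiplicative on the product $\Box_0\Box_a$. Both are immediate. The whole argument is a two-line computation once (F5) with $r=0$ is written down.
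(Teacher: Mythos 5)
Your argument is correct and is essentially identical to the paper's proof: both derive $\Box_0\Box_a = \Box_0$ from (F5) with $r=0$, specialize to $a=0$ to see that $\rho(\Box_0)$ is an idempotent in $\mathbb{R}$, and then use the same identity to conclude $\rho(\Box_a)=1$ when $\rho(\Box_0)=1$. No issues.
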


\begin{proof}
(1) If we set $r = 0 = a$ in (F5), we get $\Box_0 \Box_0 = \Box_0$, so $\Box_0$ is an idempotent. Therefore, $\rho(\Box_0) \in \mathbb{R}$ is an idempotent, and hence $\rho(\Box_0) \in \{0,1\}$.
	
(2) Suppose that $\rho(\Box_0) = 1$. By (F5), $\Box_0 \Box_a = \Box_0$ for each $a \in A$. So applying $\rho$ to both sides yields $\rho(\Box_a) = 1$.
\end{proof}

\begin{theorem} \label{thm: psi_A is 1-1}
For $A \in \bal$, the Yosida space of $\H(A)$ is homeomorphic to $\V(Y_A)$.
\end{theorem}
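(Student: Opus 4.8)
The plan is to show that the uniformly dense monomorphism $\eta_A : \H(A) \to C(\V Y_A)$ of Lemma~\ref{lem: map from HA to C(VYA)} is in fact an isomorphism onto its image, or rather that the dual map $\Y(\eta_A) : \V(Y_A) \cong Y_{C(\V Y_A)} \to Y_{\H(A)}$ is a homeomorphism. By Lemma~\ref{lem: bal morphism properties}(3), it suffices to show that $\eta_A$ is a bimorphism, and since uniform density (hence the epimorphism property) is already established, the one remaining point is that $\eta_A$ is a monomorphism, equivalently (again by Lemma~\ref{lem: bal morphism properties}) that $\Y(\eta_A)$ is onto. Concretely, identifying $Y_{C(\V Y_A)}$ with $\V(Y_A)$ and $Y_{\H(A)}$ with $\hom_{\bal}(\H(A),\mathbb R)$, the map $\Y(\eta_A)$ sends a closed set $F \subseteq Y_A$ to the homomorphism $\rho_F : \H(A) \to \mathbb R$ determined by $\rho_F(\Box_a) = \inf \zeta_A(a)(F)$ for $F \neq \varnothing$, and $\rho_\varnothing(\Box_a) = 1$ for all $a$. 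So I must show: every $\bal$-morphism $\rho : \H(A) \to \mathbb R$ arises as $\rho_F$ for a unique closed $F \subseteq Y_A$.

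First I would dispose of the degenerate case using Lemma~\ref{lem: Box0}: if $\rho(\Box_0) = 1$ then $\rho(\Box_a) = 1$ for all $a$, so $\rho = \rho_\varnothing$ and $F = \varnothing$ works (and is the only closed set with this property, since for nonempty $F$ we have $\rho_F(\Box_0) = \inf\zeta_A(0)(F) = 0$). So assume $\rho(\Box_0) = 0$. The key is the relation $R^\Box \subseteq Y_{\H(A)} \times Y_A$ and Proposition~\ref{prop: difficult}: letting $x = \ker(\rho) \in Y_{\H(A)}$, I set
\[
F := \{\, y \in Y_A \mid x R^\Box y \,\} = \{\, y \in Y_A \mid y^+ \subseteq \Box^{-1}x \,\}.
\]
This $F$ is closed in $Y_A$: membership $y \in F$ is the condition that $a \in y$ whenever $0 \le a$ and $\Box_a \notin x$, i.e. $\zeta_A(a)(y) = 0$ for each such $a$, which is an intersection of zero-sets hence closed (alternatively one shows $R^\Box$ is a continuous relation, but only closedness of fibers is needed here). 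Moreover $F \neq \varnothing$: since $\rho(\Box_0) = 0$, the element $\Box_0 \in \Box^{-1}x$, and more substantially Proposition~\ref{prop: difficult} gives $(\Box^{-1}x)^+ = \bigcup\{y^+ \mid xR^\Box y\}$, which is nonempty because $0 \in (\Box^{-1}x)^+$ forces some $y$ with $x R^\Box y$ — if $F$ were empty the right-hand side would be empty while the left contains $0$.

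It then remains to identify $\rho$ with $\rho_F$, i.e. to prove $\rho(\Box_a) = \inf \zeta_A(a)(F)$ for all $a \in A$; since $\{\Box_a \mid a \in A\}$ generates $\H(A)$ and both $\rho$ and $\rho_F$ are $\bal$-morphisms, this equality on generators gives $\rho = \rho_F$. For the inequality $\rho(\Box_a) \le \inf\zeta_A(a)(F)$: given $y \in F$, translating $a$ by a constant to make it nonnegative near $y$ and using (F4), (F2) together with $xR^\Box y$ shows $\rho(\Box_a) \le \zeta_A(a)(y)$; taking the infimum over $y \in F$ yields the bound. For the reverse inequality $\rho(\Box_a) \ge \inf\zeta_A(a)(F)$: set $r = \inf\zeta_A(a)(F)$ and consider $b = (a - r)^- = (r - a)^+$ (or the appropriate truncation), which is nonnegative and vanishes exactly where $\zeta_A(a) \le r$, in particular on a neighborhood of $F$; using Proposition~\ref{prop: difficult} to capture $(\Box^{-1}x)^+$ as the union of the $y^+$ over $y \in F$, one shows $b \in \Box^{-1}x$, i.e. $\rho(\Box_b) = 0$ (equivalently $\Box_b \in x$), and then axioms (F1)–(F5) translate this back into $\rho(\Box_a) \ge r$. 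Finally, uniqueness of $F$: if $F'$ is another closed set with $\rho_{F'} = \rho$, then $\inf\zeta_A(a)(F') = \inf\zeta_A(a)(F)$ for all $a$, and since $\zeta_A[A]$ is uniformly dense in $C(Y_A)$ and $Y_A$ is compact Hausdorff, two distinct closed sets are separated by the infima of some $\zeta_A(a)$ (as in the 1-1 argument in the proof of Lemma~\ref{lem: map from HA to C(VYA)}), so $F = F'$.

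The main obstacle is the reverse inequality $\rho(\Box_a) \ge \inf\zeta_A(a)(F)$, i.e. showing that $F$ is "large enough": one must produce, for the critical value $r = \inf\zeta_A(a)(F)$, an actual element of $A$ witnessing membership in $\Box^{-1}x$, and this is exactly where the full strength of Proposition~\ref{prop: difficult} — that $(\Box^{-1}x)^+$ is *precisely* the union of the positive cones $y^+$ over $y$ with $xR^\Box y$, not merely contains it — is used; the analogue in \cite{BCM20a} (Prop.~4.8 there) handles the modal-operator version, and here the bookkeeping is the same with $\Box a$ replaced by $\Box_a$ and $R_\Box$ by $R^\Box$.
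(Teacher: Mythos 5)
Your overall strategy matches the paper's: reduce the claim to surjectivity of $\Y(\eta_A)$, dispose of the case $\rho(\Box_0)=1$ via Lemma~\ref{lem: Box0}, and for $\rho(\Box_0)=0$ produce a closed set $F$ with $\rho=\rho_F$ using Proposition~\ref{prop: difficult}. However, two steps do not hold up as written. First, the closedness of your $F=R^\Box[x]$. The condition $y^+\subseteq\Box^{-1}x$ says: for every $a\ge 0$ with $\Box_a\notin x$ one has $a\notin y$, i.e.\ $\zeta_A(a)(y)>0$. You stated the implication backwards (``$a\in y$ whenever $0\le a$ and $\Box_a\notin x$'') and concluded that $F$ is an intersection of zero-sets; in fact it is an intersection of \emph{complements} of zero-sets, which is not obviously closed. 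Closedness of $R^\Box[x]$ is true but requires an argument (given $y\notin F$ with witness $a$, one must pass to $(a-s)^+$ for suitable $s$ to exclude a whole neighborhood of $y$). The paper sidesteps this by defining $F=\{M\in Y_A\mid S\subseteq M\}$ for the explicit set $S=\{(a-\rho(\Box_a))^-\mid a\in A\}$, which is closed by the very definition of the topology on $Y_A$.

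Second, and more seriously, the reverse inequality $\rho(\Box_a)\ge\inf\zeta_A(a)(F)$ --- the step you yourself flag as the main obstacle --- does not follow from your chosen element. With $r=\inf\zeta_A(a)(F)$ and $b=(a-r)^-=(r-a)^+$, the relations (F2)--(F4) give $\rho(\Box_b)=\bigl(\rho(\Box_{-a})+r\bigr)^+$, so $\Box_b\in x$ only tells you $\rho(\Box_{-a})\le -r$. None of (F1)--(F5) relates $\Box_{-a}$ to $-\Box_a$ (for the motivating operator $\Box f=\inf fR[\cdot]$ one has $\Box(-f)=-\sup fR[\cdot]$, not $-\Box f$), so no lower bound on $\rho(\Box_a)$ results. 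The working argument goes in the opposite direction: compute directly from (F3) and (F4) that $\rho\bigl(\Box_{(a-\rho(\Box_a))^+}\bigr)=0$, so $(a-\rho(\Box_a))^+\in(\Box^{-1}x)^+$, and then invoke the \emph{hard} inclusion $(\Box^{-1}x)^+\subseteq\bigcup\{y^+\mid xR^\Box y\}$ of Proposition~\ref{prop: difficult} to produce $y\in F$ with $\zeta_A(a)(y)\le\rho(\Box_a)$, whence $\inf\zeta_A(a)(F)\le\rho(\Box_a)$. (Equivalently: for $s<r$ no $y\in F$ contains $(a-s)^+$, so by Proposition~\ref{prop: difficult} one gets $\rho(\Box_{(a-s)^+})=(\rho(\Box_a)-s)^+\ne 0$, forcing $\rho(\Box_a)>s$.) Your sketch uses only the easy inclusion of Proposition~\ref{prop: difficult} at this point, which is why the deduction cannot close.
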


\begin{proof}
The map $\eta_A : \H(A) \to C(\V Y_A)$ induces a continuous map $\Y(\eta_A) : Y_{C(\V Y_A)} \to Y_{\H(A)}$. We identify $Y_{C(\V Y_A)}$ with $\V(Y_A)$ and $Y_{\H(A)}$ with $\hom_{\bal} (\H(A), \mathbb{R})$ as in Remark~\ref{rem: identifications}.  As we saw in the proof of Lemma~\ref{lem: map from HA to C(VYA)}, under these identifications $\Y(\eta_A)(F) := \rho_F$ satisfies $\rho_F(\Box_a) = \inf \zeta_A(a)(F)$ if $F$ is nonempty, and $\rho_F(\Box_a) = 1$ if $F = \varnothing$. By Lemma~\ref{lem: map from HA to C(VYA)}, the image of $\eta_A$ is uniformly dense in $C(\V Y_A)$. Therefore, $\Y(\eta_A)$ is 1-1 by Lemma~\ref{lem: bal morphism properties}(2). 

To show that $\Y(\eta_A)$ is onto, let $\rho : \H(A) \to \mathbb{R}$ be a $\bal$-morphism. If $\rho(\Box_0) = 1$, then $\rho(\Box_a) = 1$ for all $a \in A$ by Lemma~\ref{lem: Box0}(2). Therefore, $\rho$ and $\rho_{\varnothing}$ agree on each $\Box_a$. Since these generate $\H(A)$, we see that $\rho = \rho_{\varnothing}$.  By Lemma~\ref{lem: Box0}(1), we now may assume that $\rho(\Box_0) = 0$. By (F2), $\rho(\Box_r) = r$ for each $r \in \mathbb{R}$. Let
\[
S = \{ (a - \rho(\Box_a))^- \mid a \in A\}
\]
and $F = \{ M \in Y_A \mid S \subseteq M\}$, a closed subset of $Y_A$. We claim that $\rho = \rho_F$. Let $a \in A$ and $y \in F$. Then $(a - \rho(\Box_a))^- \in y$. This means $0 \le (\zeta_A(a) - \rho(\Box_a))(y)$ by \cite[Rem.~2.11]{BMO16}, so $\rho(\Box_a) \le \zeta_A(a)(y)$. Since this is true for all $y \in F$, we see that $\rho(\Box_a) \le \inf \zeta_A(a)(F)$. Thus, it suffices to prove that for each $a \in A$ there is $y \in F$ with $\zeta_A(a)(y) = \rho(\Box_a)$. In other words, we need to show that there is $y \in F$ with $a - \rho(\Box_a)  \in y$.

Let $x = \ker(\rho) \in Y_{\H(A)}$. If $a \in A$, then
\[
\rho(\Box_{a - \rho(\Box_a)}) = \rho(\Box_a + \Box_{-\rho(\Box_a)} - \Box_0) =  \rho(\Box_a)  - \rho(\Box_a) = 0
\]
by (F4) and the fact that $\rho(\Box_r) = r$. From this and (F3) we see that
\[
\rho(\Box_{(a - \rho(\Box_a))^+}) = \rho(\Box_{a - \rho(\Box_a)}^+) = \rho(\Box_{a - \rho(\Box_a)})^+ =  \max\{\rho(\Box_{a - \rho(\Box_a)}), 0\} = \max\{0,0\} = 0,
\]
which implies that $(a - \rho(\Box_a))^+ \in \Box^{-1} x$. By Proposition~\ref{prop: difficult}, there is $y \in Y_A$ with $xR^\Box y$ and $(a - \rho(\Box_a))^+ \in y$. We show that these two facts imply that $y \in F$ and $\rho(\Box_a) = \zeta_A(a)(y)$.
Let $b \in A$. Since $A/y \cong \mathbb{R}$, there is $r \in \mathbb{R}$ with $b - r \in y$. Therefore, $(b - r)^+ \in y$, so $\Box_{(b - r)^+} \in x$. Because $x = \ker(\rho)$,
\[
0 = \rho(\Box_{(b - r)^+}) = \rho(\Box_{b - r}^+) = \rho(\Box_{b-r})^+ = \max\{\rho(\Box_{b - r}), 0\} = \max\{\rho(\Box_b) - r, 0\},
\]
so $\rho(\Box_b) \le r$. Consequently, $b + y = r + y \ge \rho(\Box_b) + y$, and hence $b - \rho(\Box_b) + y \ge 0 + y$. This implies that $(b - \rho(\Box_b))^- \in y$. Since this is true for all $b \in A$, we get $S \subseteq y$, so $y \in F$.  Moreover, for $b = a$ we have $(a - \rho(\Box_a))^+, (a - \rho(\Box_a))^- \in y$, so $a - \rho(\Box_a) \in y$. By the above, this shows that $\rho = \rho_F$, so $\Y(\eta_A)$ is onto. Thus, $\Y(\eta_A)$ is a homeomorphism.
\end{proof}

\begin{remark}
By Theorem~\ref{thm: psi_A is 1-1}, $Y_{\H(A)}$ is homeomorphic to $\V(Y_A)$. Under this homeomorphism, $R^\Box \subseteq Y_{\H(A)} \times Y_A$ is identified with the relation $R \subseteq \V(Y_A) \times Y_A$ given by $F R y$ iff $y \in F$. From this it follows that $R[F] = F$, and for $U\subseteq Y_A$ open, we have $R^{-1}[U] = \Diamond_U$ and $R^{-1}[Y_A \setminus U] = \V(Y_A) \setminus \Box_U$. Consequently, $R$ is a continuous relation, and hence so is $R^\Box$. 
\end{remark}

\section{$\Alg(\H)$ and $\Coalg(\V)$} \label{sec: Alg(H) and Coalg(V)}

In this section we lift the dual adjunction between $\bal$ and $\KHaus$ to a dual adjunction between $\Alg(\H)$ and $\Coalg(\V)$. We show that this dual adjunction restricts to a dual equivalence between the reflective subcategory $\Algu(\H)$ of $\Alg(\H)$ and $\Coalg(\V)$. The category $\Algu(\H)$ consists of those $(A, \alpha) \in \Alg(\H)$ where $A\in\ubal$. This dual equivalence lifts Gelfand duality. We conclude the section by giving an alternate description of $\Algu(\H)$ as $\Alg(\Hu)$ where $\Hu$ is the endofunctor $\C\Y\H:\ubal\to\ubal$. 
\[
\begin{tikzcd}
\ubal \arrow[r, "\H"] & \bal \arrow[r, "\Y"] & \KHaus \arrow[r, "\C"] & \ubal
\end{tikzcd}
\]

We start by recalling the definition of coalgebras (see, e.g., \cite[Def.~9.1]{Ven07}), which is dual to the definition of algebras for an endofunctor.

\begin{definition} \label{def: Coalg(T)}
\begin{enumerate}
\item[]
\item A {\em coalgebra} for an endofunctor $\T:\sf{C} \to \sf{C}$ is a pair $(B,g)$ where $B$ is an object of $\sf{C}$ and $g:B \to \T(B)$ is a $\sf{C}$-morphism.
\item A {\em morphism} between two coalgebras $(B_1, g_1)$ and $(B_2,g_2)$ for $\T$ is a $\sf{C}$-morphism $\alpha : B_1 \to B_2$ such that the following square is commutative.
\[
\begin{tikzcd}[column sep = 5pc]
B_1 \arrow[d, "g_1"'] \arrow[r, "\alpha"] &  B_2 \arrow[d, "g_2"] \\
\T(B_1) \arrow[r, "\T(\alpha)"'] &\T(B_2)
\end{tikzcd}
\]
\item Let $\Coalg(\T)$ be the category whose objects are coalgebras for $\T$ and whose morphisms are morphisms of coalgebras.
\end{enumerate}
\end{definition}

\begin{lemma} \label{lem: commutativity}
Let $\gamma : A \to A'$ be a $\bal$-morphism. Then the following diagram is commutative.
\[
\begin{tikzcd}[column sep = 5pc]
A \arrow[r, "h_A"'] \arrow[d, "\gamma"'] \arrow[rr, bend left = 20, "g_A"] & \H(A) \arrow[r, "\eta_A"'] \arrow[d, "\H(\gamma)"'] & C(\V Y_A) \arrow[d, "\C\V\Y(\gamma)"] \\
A' \arrow[r, "h_{A'}"] \arrow[rr, bend right = 20, "g_{A'}"'] & \H(A') \arrow[r, "\eta_{A'}"] & C(\V Y_{A'})
\end{tikzcd}
\]
\end{lemma}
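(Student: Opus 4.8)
The plan is to verify commutativity of the two squares separately; the left square is already available and the right square is the new content. The left square (the one with vertices $A$, $\H(A)$, $A'$, $\H(A')$) commutes by the defining property of $\H(\gamma)$ established when $\H$ was shown to be a functor, namely $\H(\gamma)\circ h_A = h_{A'}\circ\gamma$. The outer curved triangles are just the definition $g_A = \eta_A\circ h_A$ from Lemma~\ref{lem: map from HA to C(VYA)} (and similarly for $A'$), so once both squares commute, the claim about the $g$'s follows automatically. Hence everything reduces to showing
\[
\C\V\Y(\gamma)\circ\eta_A = \eta_{A'}\circ\H(\gamma).
\]

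To prove this, I would use that $\{\Box_a\mid a\in A\}$ generates $\H(A)$ in $\bal$, so it suffices to check the equality on each generator $\Box_a$, $a\in A$. On the one hand, $\H(\gamma)(\Box_a) = \Box_{\gamma(a)}$ by Remark~\ref{rem: H on box elements}, so the right-hand side sends $\Box_a$ to $\eta_{A'}(\Box_{\gamma(a)}) = g_{A'}(\gamma(a))$. On the other hand, $\eta_A(\Box_a) = g_A(a)$, and $\C\V\Y(\gamma) = C(\V(\Y(\gamma)))$ acts by precomposition with $\V(\Y(\gamma))=\V(\gamma^{-1}):\V(Y_{A'})\to\V(Y_A)$, which sends a closed set $F'\subseteq Y_{A'}$ to its image $\gamma^{-1}(F')\subseteq Y_A$. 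So the left-hand side sends $\Box_a$ to the function $F'\mapsto g_A(a)(\gamma^{-1}(F'))$. Thus the whole claim comes down to the pointwise identity
\[
g_A(a)\bigl(\gamma^{-1}(F')\bigr) = g_{A'}(\gamma(a))(F')
\qquad\text{for all closed } F'\subseteq Y_{A'}.
\]

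For this pointwise identity I would split into the cases $F'=\varnothing$ and $F'\neq\varnothing$. If $F'=\varnothing$ then $\gamma^{-1}(F')=\varnothing$ and both sides equal $1$ by the definition of $g$. If $F'\neq\varnothing$, then since $\Y(\gamma)=\gamma^{-1}:Y_{A'}\to Y_A$ is continuous and $F'$ is a nonempty compact set, $\gamma^{-1}(F')$ is a nonempty compact subset of $Y_A$, and I must show $\inf\zeta_A(a)(\gamma^{-1}(F')) = \inf\zeta_{A'}(\gamma(a))(F')$. The key algebraic input is the naturality square $\zeta_{A'}\circ\gamma = \C\Y(\gamma)\circ\zeta_A$ for the unit $\zeta$ of the Gelfand adjunction, which unwinds to $\zeta_{A'}(\gamma(a)) = \zeta_A(a)\circ\gamma^{-1}$ as continuous functions on $Y_{A'}$. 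Therefore $\zeta_{A'}(\gamma(a))(F') = \zeta_A(a)(\gamma^{-1}(F'))$ as subsets of $\mathbb R$, and taking infima gives the desired equality. The main thing to be careful about is the direction of the induced maps and the identification $\Y(\gamma)=\gamma^{-1}$ together with $\V(\Y(\gamma))(F')=\gamma^{-1}(F')$; once those bookkeeping points are pinned down, the argument is a short computation with no genuine obstacle.
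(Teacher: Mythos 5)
Your proof is correct and follows essentially the same route as the paper: the paper likewise disposes of the left square via $\H(\gamma)\circ h_A = h_{A'}\circ\gamma$, verifies the outer square $\C\V\Y(\gamma)\circ g_A = g_{A'}\circ\gamma$ by the same case split on $F'=\varnothing$ and the naturality of $\zeta$, and then deduces the right square from the fact that $h_A[A]$ generates $\H(A)$ — which is exactly your ``check on the generators $\Box_a$'' step. The only caveat is notational: $\gamma^{-1}(F')$ should be read as the image of $F'$ under $\Y(\gamma)$ (as you indicate), not a preimage.
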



\begin{proof}
By Remark~\ref{rem: H on box elements}, $\H(\gamma)(h_A(a)) = \H(\gamma)(\Box_a) = \Box_{\gamma(a)} = h_{A'}\gamma(a)$ for each $a \in A$. This shows that the left square of the diagram is commutative. By definition, $g_A = \eta_A \circ h_A$ and $g_{A'} = \eta_{A'} \circ h_{A'}$. We next show that the outside square is commutative, from which we then derive that the right square is commutative. Let $a \in A$ and $F \in \V(Y_{A'})$. If $F = \varnothing$, then
\[
\C\V\Y(\gamma)(g_A(a))(\varnothing) = g_A(a)(\Y(\gamma)(\varnothing))=g_A(a)(\varnothing)=1=g_{A'} \gamma(a)(\varnothing).
\]
If $F \ne \varnothing$, then naturality of $\zeta$ yields
\begin{align*}
\C\V\Y(\gamma)(g_A(a))(F) &= g_A(a)(\Y(\gamma)(F))  = \inf (\zeta_A(a)\Y(\gamma))(F) \\
& = \inf (\C\Y(\gamma)\circ \zeta_A)(a)(F) = \inf \zeta_{A'}(\gamma(a))(F) \\
&= g_{A'} \gamma(a)(F).
\end{align*}
Thus, $\C\V\Y(\gamma) \circ g_A=g_{A'} \circ \gamma$.
Finally, to see that the right square is commutative, 
\begin{align*}
\C\V\Y(\gamma) \circ \eta_A \circ h_A = \C\V\Y(\gamma) \circ g_A = g_{A'} \circ \gamma=  \eta_{A'} \circ \H(\gamma) \circ h_A.
\end{align*}
This yields $\C\V\Y(\gamma) \circ \eta_A=\eta_{A'} \circ \H(\gamma)$ because the image of $h_A$ generates $\H(A)$. 
\end{proof}

\begin{proposition} \label{prop: F}
There is a contravariant functor $\F: \Alg(\H) \to \Coalg(\V)$.
\end{proposition}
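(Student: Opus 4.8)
The plan is to define $\F$ on objects by sending an algebra $(A,\sigma)\in\Alg(\H)$ to the pair $(Y_A,\,\Y(\sigma)\circ\Y(\eta_A)^{-1})$, where $\Y(\eta_A)^{-1}:Y_{\H(A)}\to\V(Y_A)$ is the inverse of the homeomorphism furnished by Theorem~\ref{thm: psi_A is 1-1}. Concretely, $\sigma:\H(A)\to A$ gives $\Y(\sigma):Y_A\to Y_{\H(A)}$, and composing with $\Y(\eta_A)^{-1}$ produces a continuous map $g_{(A,\sigma)}:Y_A\to\V(Y_A)$; so $\F(A,\sigma):=(Y_A,g_{(A,\sigma)})\in\Coalg(\V)$. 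On morphisms, an $\Alg(\H)$-morphism $\alpha:(A,\sigma)\to(A',\sigma')$ is a $\bal$-morphism $\alpha:A\to A'$ satisfying $\alpha\circ\sigma=\sigma'\circ\H(\alpha)$; I would set $\F(\alpha)=\Y(\alpha):Y_{A'}\to Y_A$ and check that this is a $\Coalg(\V)$-morphism, i.e.\ that the square
\[
\begin{tikzcd}[column sep = 5pc]
Y_{A'} \arrow[r, "\Y(\alpha)"] \arrow[d, "g_{(A',\sigma')}"'] & Y_A \arrow[d, "g_{(A,\sigma)}"] \\
\V(Y_{A'}) \arrow[r, "\V\Y(\alpha)"'] & \V(Y_A)
\end{tikzcd}
\]
commutes. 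Functoriality (preservation of identities and of composition, with the contravariant twist) is then immediate from the corresponding properties of $\Y$.

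The crux is verifying that square, and this is exactly where Lemma~\ref{lem: commutativity} does the work. Applying the contravariant functor $\Y$ to the right-hand square of the diagram in Lemma~\ref{lem: commutativity} (with $\gamma=\alpha$) gives $\Y(\eta_A)\circ\Y\C\V\Y(\alpha)=\Y(\H(\alpha))\circ\Y(\eta_{A'})$; since $\Y\C\V\Y(\alpha)$ is identified with $\V\Y(\alpha)$ via the homeomorphisms $\varepsilon_{\V Y_A}$ (using Remark~\ref{rem: compositions} and the identification $Y_{C(\V Y_A)}\cong\V(Y_A)$), this rearranges to $\Y(\eta_A)^{-1}\circ\Y(\H(\alpha))=\V\Y(\alpha)\circ\Y(\eta_{A'})^{-1}$. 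Combining with $\Y$ applied to the algebra-morphism square $\alpha\circ\sigma=\sigma'\circ\H(\alpha)$, namely $\Y(\sigma)\circ\Y(\alpha)=\Y(\H(\alpha))\circ\Y(\sigma')$, we get
\[
g_{(A,\sigma)}\circ\Y(\alpha)=\Y(\eta_A)^{-1}\circ\Y(\sigma)\circ\Y(\alpha)=\Y(\eta_A)^{-1}\circ\Y(\H(\alpha))\circ\Y(\sigma')=\V\Y(\alpha)\circ\Y(\eta_{A'})^{-1}\circ\Y(\sigma')=\V\Y(\alpha)\circ g_{(A',\sigma')},
\]
which is precisely the commutativity of the square. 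The only other thing to confirm is that $g_{(A,\sigma)}$ is a genuine $\KHaus$-morphism into $\V(Y_A)$, but that is automatic: it is a composite of the continuous map $\Y(\sigma)$ with the homeomorphism $\Y(\eta_A)^{-1}$.

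I expect the main obstacle to be bookkeeping rather than mathematical depth: keeping straight the several identifications ($Y_{\H(A)}\cong\V(Y_A)$ from Theorem~\ref{thm: psi_A is 1-1}, $Y_{C(X)}\cong X$ via $\varepsilon_X$, and $Y_A\cong\hom_\bal(A,\mathbb R)$ from Remark~\ref{rem: identifications}) and checking that $\Y$ applied to the relevant squares lands on the nose where one wants it, using the triangle identities of Remark~\ref{rem: compositions}. A cleaner alternative that sidesteps choosing the inverse homeomorphism explicitly is to observe that $\Y(\eta_A):Y_{C(\V Y_A)}\to Y_{\H(A)}$ is itself a homeomorphism and instead phrase $\F(A,\sigma)$ using the map $\varepsilon_{\V Y_A}^{-1}\circ\Y(\eta_A)^{-1}\circ\Y(\sigma)$; I would present it in whichever form makes the naturality computation shortest. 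Either way the content is: (i) objects go to coalgebras because $Y_{\H(A)}\cong\V(Y_A)$; (ii) morphisms go to coalgebra morphisms because of Lemma~\ref{lem: commutativity}; (iii) functoriality is inherited from $\Y$.
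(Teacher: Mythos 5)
Your proposal is correct and follows essentially the same route as the paper: the object map is exactly the paper's $\F_\alpha = \varepsilon_{\V(Y_A)}^{-1}\circ\Y(\eta_A)^{-1}\circ\Y(\alpha)$ (your ``cleaner alternative'' is literally the paper's definition), morphisms go to $\Y(\gamma)$, and the coalgebra square is verified by combining $\Y$ applied to the algebra-morphism square with the right-hand square of Lemma~\ref{lem: commutativity} and the naturality of $\varepsilon$, which is precisely the paper's two-square diagram. The only cosmetic difference is that you suppress the identification $Y_{C(\V Y_A)}\cong\V(Y_A)$ in the first formulation, but you account for it explicitly afterwards, so nothing is missing.
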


\begin{proof}
By the proof of Theorem~\ref{thm: psi_A is 1-1}, if $A \in \bal$, then $\Y(\eta_A)$ is a homeomorphism. For $(A, \alpha) \in \Alg(\H)$, we set $\F(A,\alpha) = (Y_A, \F_\alpha) \in \Coalg(\V)$, where
\[
\F_\alpha = \varepsilon_{\V(Y_A)}^{-1} \circ \Y(\eta_A)^{-1} \circ \Y(\alpha) : Y_A \to \V(Y_A),
\]
\[
\begin{tikzcd}[column sep = 5pc]
Y_A \arrow[r, "\Y(\alpha)"] \arrow[rrr, bend right = 10, "\F_\alpha"'] &  Y_{\H(A)} \arrow[r, "\Y(\eta_A)^{-1} "]  & Y_{C(\V Y_A)} \arrow[r, "\varepsilon_{\V(Y_A)}^{-1}"] & \V(Y_A)
\end{tikzcd}
\]
If $\gamma : (A, \alpha) \to (A', \alpha')$ is an $\Alg(\H)$-morphism
\[
\begin{tikzcd}[column sep = 5pc]
\H(A) \arrow[r, "\alpha"] \arrow[d, "\H(\gamma)"'] & A \arrow[d, "\gamma"] \\
\H(A') \arrow[r, "\alpha'"'] & A'
\end{tikzcd}
\]
then $\Y(\gamma) : Y_{A'} \to Y_A$ is a continuous map. We define $\F(\gamma) = \Y(\gamma)$. To see that $\Y(\gamma)$ is a $\Coalg(\V)$-morphism, we show that the following diagram is commutative.
\begin{equation} \label{eqn: F}
\begin{tikzcd}[column sep = 5pc]
Y_{A'} \arrow[r, "\F_{\alpha'}"] \arrow[d, "\Y(\gamma)"'] & \V(Y_{A'}) \arrow[d, "\V\Y(\gamma)"] \\
Y_A \arrow[r, "\F_{\alpha}"'] & \V(Y_A)
\end{tikzcd}
\end{equation}
To see this we first show that the following diagram is commutative.
\begin{equation} \label{eqn: twosquare}
\begin{tikzcd}[column sep = 5pc]
\V(Y_{A'}) \arrow[r, "\varepsilon_{\V(Y_{A'})}"] \arrow[d, "\V\Y(\gamma)"'] & Y_{C(\V Y_{A'})} \arrow[r, "\Y(\eta_{A'})"] \arrow[d, "\Y\C\V\Y(\gamma)"] & Y_{\H(A')} \arrow[d, "\Y\H(\gamma)"] \\
\V(Y_{A}) \arrow[r, "\varepsilon_{\V(Y_A)}"'] & Y_{C(\V Y_{A})} \arrow[r, "\Y(\eta_A)"'] & Y_{\H(A)}
\end{tikzcd}
\end{equation}
The left square commutes due to the naturality of $\varepsilon$. For the right square,  $\Y\H(\gamma) \circ \Y(\eta_{A'}) = \Y(\eta_{A'} \circ \H(\gamma))$ and $\Y(\eta_A) \circ \Y\C\V\Y(\gamma) = \Y(\C\V\Y(\gamma) \circ \eta_A)$. These are equal by Lemma~\ref{lem: commutativity}. Now, we show that Diagram~(\ref{eqn: F}) commutes. The equation
\[
 \V\Y(\gamma) \circ \F_{\alpha'} = \F_{\alpha} \circ \Y(\gamma)
 \]
 is equivalent to
\[
\V\Y(\gamma) \circ \varepsilon_{\V(Y_{A'})}^{-1} \circ \Y(\eta_{A'})^{-1} \circ \Y(\alpha') = \varepsilon_{\V(Y_A)}^{-1} \circ \Y(\eta_A)^{-1} \circ \Y(\alpha) \circ \Y(\gamma)
\]
and therefore is equivalent to
\begin{equation} \label{eqn: comm}
\Y(\eta_A) \circ \varepsilon_{\V(Y_A)} \circ \V\Y(\gamma) \circ \varepsilon_{\V(Y_{A'})}^{-1} \circ \Y(\eta_{A'})^{-1} \circ \Y(\alpha') = \Y(\alpha) \circ \Y(\gamma).
\end{equation}
Using the commutativity of Diagram~(\ref{eqn: twosquare}) and Equation~(\ref{eqn: comm}), we see that commutativity of Diagram~(\ref{eqn: F}) is equivalent to the equation
\[
\Y\H(\gamma) \circ \Y(\alpha') = \Y(\alpha) \circ \Y(\gamma).
\]
Since $\gamma$ is an $\Alg(\H)$-morphism, we have $\gamma \circ \alpha = \alpha' \circ \H(\gamma)$. Applying $\Y$ to both sides then yields the commutativity of Diagram~(\ref{eqn: F}). Therefore, $\Y(\gamma)$ is a $\Coalg(\V)$-morphism. It is then straightforward to see that $\F$ is a contravariant functor.
\end{proof}

\begin{proposition} \label{prop: G}
There is a contravariant functor $\G : \Coalg(\V) \to \Alg(\H)$.
\end{proposition}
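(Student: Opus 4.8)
The plan is to construct $\G$ as the contravariant functor that, on objects, dualizes a coalgebra structure map via the $\C$ functor and the homeomorphism of Theorem~\ref{thm: psi_A is 1-1}, and on morphisms is simply $\C$ applied to the underlying continuous map. Concretely, given $(X, g) \in \Coalg(\V)$, so $g : X \to \V(X)$ is continuous, I would first identify $Y_{C(X)}$ with $X$ via $\varepsilon_X$ and $Y_{\H(C(X))}$ with $\V(Y_{C(X)}) \cong \V(X)$ via $\Y(\eta_{C(X)})$ together with $\varepsilon_{\V(X)}$. Using these identifications, the composite
\[
X \xrightarrow{\ g\ } \V(X) \xrightarrow{\ \varepsilon_{\V(X)}\ } Y_{C(\V X)} \xrightarrow{\ \Y(\eta_{C(X)})\ } Y_{\H(C(X))}
\]
is a continuous map, and applying $\C$ to it and precomposing with $\zeta$-type identifications yields a $\bal$-morphism $\G_g : \H(C(X)) \to C(X)$. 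Thus I set $\G(X, g) = (C(X), \G_g) \in \Alg(\H)$. The explicit formula should be $\G_g = \C(\varepsilon_X)^{-1} \circ \C(g) \circ \C(\varepsilon_{\V(X)}) \circ \C\Y(\eta_{C(X)})$, or equivalently, using Remark~\ref{rem: compositions} to rewrite $\C(\varepsilon_X)^{-1} = \zeta_{C(X)}$, a cleaner composite; I would pick whichever makes the functoriality check shortest.

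For morphisms, if $\varphi : (X, g) \to (X', g')$ is a $\Coalg(\V)$-morphism, i.e.\ $\V(\varphi) \circ g = g' \circ \varphi$, then $\C(\varphi) : C(X') \to C(X)$ is a $\bal$-morphism, and I would define $\G(\varphi) = \C(\varphi)$. The content is to check that $\C(\varphi)$ is an $\Alg(\H)$-morphism, i.e.\ that the square
\[
\begin{tikzcd}[column sep = 4pc]
\H(C(X')) \arrow[r, "\G_{g'}"] \arrow[d, "\H\C(\varphi)"'] & C(X') \arrow[d, "\C(\varphi)"] \\
\H(C(X)) \arrow[r, "\G_g"'] & C(X)
\end{tikzcd}
\]
commutes. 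This reduces, after applying $\Y$ and using that $\Y$ is a functor sending bimorphisms to homeomorphisms, to a diagram of spaces built out of the naturality squares for $\varepsilon$ and for $\eta$ (Lemma~\ref{lem: commutativity}, applied to $\gamma = \C(\varphi)$), together with the coalgebra-morphism condition $\V(\varphi) \circ g = g' \circ \varphi$. More precisely, pasting the naturality square of $\eta$ from Lemma~\ref{lem: commutativity} (rewritten on Yosida spaces as in Diagram~(\ref{eqn: twosquare}) of the previous proof) with the naturality square of $\varepsilon$ and with $\Y\C(\varphi) = $ (up to the $\varepsilon$ identifications) $\varphi$ itself, the needed commutativity is exactly $\Y$ applied to the coalgebra condition. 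Finally, since $\C$ is a functor, $\G$ preserves identities and reverses composition, so $\G$ is a contravariant functor.

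The main obstacle is bookkeeping: everything here is a chase through several layers of natural-transformation squares ($\zeta$, $\varepsilon$, $\eta$) interleaved with the functors $\C$, $\Y$, $\H$, $\V$, and it is easy to get a direction or an identification backwards. To keep it manageable I would work entirely on the topological side after an initial reduction — translate the required $\bal$-morphism identity into an identity of continuous maps between Yosida spaces via Lemma~\ref{lem: bal morphism properties} and the fact that $h$-images generate, and then the identity becomes a pasting of (a) the naturality of $\varepsilon$, (b) the $\Y$-image of the naturality square for $\eta$ supplied by Lemma~\ref{lem: commutativity}, and (c) the coalgebra-morphism hypothesis. I expect the verification that $\G_g$ is well defined (independent of the chosen identifications, and genuinely a morphism in $\bal$ rather than merely $\Set$) to be immediate once one notes every arrow in the defining composite is already a $\bal$-morphism, so the real work is just the morphism square, which is routine given Lemma~\ref{lem: commutativity}.
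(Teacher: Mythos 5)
Your construction is essentially the paper's: the paper sets $\G_\sigma = \C(\sigma) \circ \C\V(\varepsilon_X) \circ \eta_{C(X)} : \H C(X) \to C(X)$ (using $\eta_{C(X)}$ directly, so no $\zeta$-identification or final $\C(\varepsilon_X)^{-1}$ is needed --- note your displayed formula's outer $\C(\varepsilon_X)^{-1}$ would land in $C(Y_{C(X)})$ rather than $C(X)$, a slip of the bookkeeping kind you already flag), takes $\G(\varphi) = \C(\varphi)$ on morphisms, and verifies the algebra-morphism square by exactly the pasting you describe: Lemma~\ref{lem: commutativity} applied to $\gamma = \C(\varphi)$, naturality of $\varepsilon$, and the coalgebra condition $\V(\varphi)\circ\sigma = \sigma'\circ\varphi$. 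The only cosmetic difference is that the paper carries out this verification as a direct algebraic computation on the $\C$-side rather than transporting it to Yosida spaces.
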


\begin{proof}
Let $(X, \sigma) \in \Coalg(\V)$. Then $\C(\sigma) : C(\V X) \to C(X)$ is a $\bal$-morphism. We set $\G(X, \sigma) = (X, \G_\sigma)$, where $\G_\sigma = \C(\sigma) \circ \C\V(\varepsilon_X) \circ \eta_{C(X)}$.
\[
\begin{tikzcd}[column sep = 5pc]
\H C(X) \arrow[r, "\eta_{C(X)}"] \arrow[rrr, bend right = 10, "\G_{\sigma}"'] &  C(\V Y_{C(X)}) \arrow[r, "\C\V(\varepsilon_X)"] & C(\V X) \arrow[r, "\C(\sigma)"] & C(X)\end{tikzcd}
\]
If $\varphi : (X, \sigma) \to (X', \sigma')$ is a $\Coalg(\V)$-morphism,
\[
\begin{tikzcd}[column sep = 5pc]
X \arrow[r, "\sigma"] \arrow[d, "\varphi"'] & \V(X) \arrow[d, "\V(\varphi)"] \\
X' \arrow[r, "\sigma'"'] & \V(X')
\end{tikzcd}
\]
We define $\G(\varphi) = \C(\varphi)$. We need to show that $\C(\varphi)$ is an $\Alg(\H)$-morphism.
\[
\begin{tikzcd}[column sep = 5pc]
\H C(X') \arrow[r, "\G_{\sigma'}"] \arrow[d, "\H \C(\varphi)"'] & C(X') \arrow[d, "\C(\varphi)"] \\
\H C(X) \arrow[r, "\G_{\sigma}"'] & C(X)
\end{tikzcd}
\]
We have
\begin{align*}
\C(\varphi) \circ \G_{\sigma'} &= \C(\varphi) \circ \C(\sigma') \circ \C\V(\varepsilon_{X'}) \circ \eta_{C(X')} \\
&= \C(\sigma' \circ \varphi) \circ \C\V(\varepsilon_{X'}) \circ \eta_{C(X')} \\
&= \C(\V(\varphi) \circ \sigma) \circ \C\V(\varepsilon_{X'}) \circ \eta_{C(X')} \\
&= \C(\V(\varepsilon_{X'}) \circ \V(\varphi) \circ \sigma) \circ \eta_{C(X')} \\
&= \C(\V(\varepsilon_{X'} \circ \varphi) \circ \sigma) \circ \eta_{C(X')}
\end{align*}
On the other hand, 
\begin{align*}
\G_{\sigma} \circ \H \C(\varphi) &= \C(\sigma) \circ \C\V(\varepsilon_X) \circ \eta_{C(X)} \circ \H \C(\varphi) \\
&= \C(\sigma) \circ \C\V(\varepsilon_X) \circ \C\V\Y \C(\varphi) \circ \eta_{C(X')} \\
&= \C(\sigma) \circ \C\V(\Y\C(\varphi) \circ \varepsilon_X) \circ \eta_{C(X')}\\
&= \C(\sigma) \circ \C\V(\varepsilon_{X'} \circ \varphi) \circ \eta_{C(X')} \\
&= \C(\V(\varepsilon_{X'} \circ \varphi) \circ \sigma) \circ \eta_{C(X')}
\end{align*}
where the second equality holds by applying Lemma~\ref{lem: commutativity} to $\gamma = \C(\varphi)$ and the fourth equality by the naturality of $\varepsilon$. Thus, $\C(\varphi) \circ \G_{\sigma'} = \G_{\sigma} \circ \H \C(\varphi)$. It is then straightforward to see that $\G$ is a contravariant functor.
\end{proof}

\begin{proposition} \label{prop: xi}
There is a natural isomorphism $\xi : 1_{\Coalg(\V)} \to \F \G$.
\end{proposition}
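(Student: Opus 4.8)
The plan is to construct, for each coalgebra $(X,\sigma)\in\Coalg(\V)$, a coalgebra morphism $\xi_{(X,\sigma)} : (X,\sigma) \to \F\G(X,\sigma)$ and show it is an isomorphism, then check naturality. Recall that $\G(X,\sigma) = (X, \G_\sigma)$ with $\G_\sigma = \C(\sigma)\circ\C\V(\varepsilon_X)\circ\eta_{C(X)}$, and then $\F\G(X,\sigma) = (Y_{C(X)}, \F_{\G_\sigma})$ with $\F_{\G_\sigma} = \varepsilon_{\V(Y_{C(X)})}^{-1}\circ\Y(\eta_{C(X)})^{-1}\circ\Y(\G_\sigma)$. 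The natural candidate for $\xi_{(X,\sigma)}$ is the Gelfand unit $\varepsilon_X : X \to Y_{C(X)}$, which is a homeomorphism, so the only real content is verifying it is a $\Coalg(\V)$-morphism, i.e. that the square
\[
\begin{tikzcd}[column sep = 5pc]
X \arrow[r, "\sigma"] \arrow[d, "\varepsilon_X"'] & \V(X) \arrow[d, "\V(\varepsilon_X)"] \\
Y_{C(X)} \arrow[r, "\F_{\G_\sigma}"'] & \V(Y_{C(X)})
\end{tikzcd}
\]
commutes.

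First I would unravel $\F_{\G_\sigma}\circ\varepsilon_X$ using the definition of $\F_{\G_\sigma}$ and of $\G_\sigma$. Applying $\Y$ to $\G_\sigma = \C(\sigma)\circ\C\V(\varepsilon_X)\circ\eta_{C(X)}$ gives $\Y(\G_\sigma) = \Y(\eta_{C(X)})\circ\Y\C\V(\varepsilon_X)\circ\Y\C(\sigma)$, so $\Y(\eta_{C(X)})^{-1}\circ\Y(\G_\sigma) = \Y\C\V(\varepsilon_X)\circ\Y\C(\sigma)$. Thus the commutativity to be shown reduces, after composing with $\varepsilon_{\V(Y_{C(X)})}$, to
\[
\varepsilon_{\V(Y_{C(X)})}\circ\V(\varepsilon_X)\circ\sigma = \Y\C\V(\varepsilon_X)\circ\Y\C(\sigma)\circ\varepsilon_X.
\]
On the right, $\Y\C(\sigma)\circ\varepsilon_X = \varepsilon_{\V X}\circ\sigma$ by naturality of $\varepsilon$ (applied to $\sigma : X\to\V X$), and then $\Y\C\V(\varepsilon_X)\circ\varepsilon_{\V X} = \varepsilon_{\V(Y_{C(X)})}\circ\V(\varepsilon_X)$ by naturality of $\varepsilon$ again (applied to $\V(\varepsilon_X) : \V X \to \V(Y_{C(X)})$). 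Chaining these two naturality squares gives exactly the left-hand side, so the diagram commutes and $\varepsilon_X$ is a $\Coalg(\V)$-morphism; being a homeomorphism (hence $\C(\varepsilon_X)$ is a $\bal$-isomorphism, and $\F\G$ preserves the underlying object), it is a $\Coalg(\V)$-isomorphism. I would therefore set $\xi_{(X,\sigma)} = \varepsilon_X$.

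For naturality, given a $\Coalg(\V)$-morphism $\varphi : (X,\sigma)\to(X',\sigma')$, I need $\xi_{(X',\sigma')}\circ\varphi = \F\G(\varphi)\circ\xi_{(X,\sigma)}$, i.e. $\varepsilon_{X'}\circ\varphi = \Y\C(\varphi)\circ\varepsilon_X$ (since $\F\G(\varphi) = \Y\C(\varphi)$ by the definitions of $\F$ and $\G$ on morphisms). But this is precisely the naturality square of $\varepsilon : 1_{\KHaus}\to\Y\C$ applied to $\varphi$, which holds by Gelfand duality. Hence $\xi : 1_{\Coalg(\V)}\to\F\G$ is a natural isomorphism. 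The only mildly delicate point — the ``main obstacle'' — is bookkeeping the two nested instances of naturality of $\varepsilon$ correctly (one for $\sigma$, one for $\V(\varepsilon_X)$) and keeping straight which identifications from Remark~\ref{rem: identifications} and Remark~\ref{rem: compositions} are being invoked; once the diagram is drawn out, the computation is purely formal.
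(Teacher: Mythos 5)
Your proposal is correct and follows essentially the same route as the paper: take $\xi_{(X,\sigma)}=\varepsilon_X$, verify the coalgebra-morphism square by unravelling $\F_{\G_\sigma}$ and applying naturality of $\varepsilon$ twice (once to $\sigma$ and once to $\V(\varepsilon_X)$), and deduce naturality of $\xi$ from the naturality square of $\varepsilon$. The only difference is presentational: the paper computes $\F_{\G_\sigma}=\V(\varepsilon_X)\circ\sigma\circ\varepsilon_X^{-1}$ and then composes with $\varepsilon_X$, whereas you verify the equivalent identity directly.
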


\begin{proof}
We define $\xi : 1_{\Coalg(\V)} \to \F\G$ as follows. If $(X, \sigma) \in \Coalg(\V)$,  then $\xi_{(X,\sigma)} = \varepsilon_X$.
\begin{equation} \label{eqn: diagram1}
\begin{tikzcd}[column sep = 5pc]
X \arrow[r, "\sigma"] \arrow[d, "\varepsilon_X"'] & \V(X) \arrow[d, "\V(\varepsilon_{X})"] \\
Y_{C(X)} \arrow[r, "\F_{\G_\sigma}"'] & \V(Y_{C(X)})
\end{tikzcd}
\end{equation}
To see that $\varepsilon_X$ is a $\Coalg(\V)$-morphism, we have $\G_{\sigma} = \C(\sigma) \circ \C\V(\varepsilon_X) \circ \eta_{C(X)}$. Therefore,
\begin{align*}
\F_{\G_{\sigma}} &= \varepsilon_{\V(Y_{C(X)})}^{-1} \circ \Y(\eta_{C(X)})^{-1} \circ \Y(\G_{\sigma}) \\
&= \varepsilon_{\V(Y_{C(X)})}^{-1} \circ \Y(\eta_{C(X)})^{-1} \circ \Y(\C(\sigma) \circ \C\V(\varepsilon_X) \circ \eta_{C(X)}) \\ 
&= \varepsilon_{\V(Y_{C(X)})}^{-1} \circ \Y(\eta_{C(X)})^{-1} \circ \Y(\eta_{C(X)}) \circ \Y\C\V(\varepsilon_X) \circ \Y\C(\sigma) \\
&= \varepsilon_{\V(Y_{C(X)})}^{-1} \circ \Y\C\V(\varepsilon_X) \circ \Y\C(\sigma) \\
&= \V(\varepsilon_X) \circ \varepsilon_{\V(X)}^{-1}\circ \Y\C(\sigma) \\
&= \V(\varepsilon_X) \circ \sigma \circ \varepsilon_{X}^{-1}
\end{align*}
where the last two equalities hold since $\varepsilon$ is a natural isomorphism. Composing both sides on the right by $\varepsilon_X$ shows that Diagram~(\ref{eqn: diagram1}) commutes. Thus, $\varepsilon_X$ is a $\Coalg(\V)$-morphism.

To see that $\xi : 1_{\Coalg(\V)} \to \F\G$ is a natural transformation, let $\varphi : (X, \sigma) \to (X', \sigma')$ be a $\Coalg(\V)$-morphism. The following diagram commutes since $\varepsilon$ is a natural transformation. 
\[
\begin{tikzcd}[column sep = 5pc]
X \ar[r, "\varepsilon_X"]  \arrow[d, "\varphi"'] & Y_{C(X)} \arrow[d, "\Y\C(\varphi)"] \\
X' \arrow[r, "\varepsilon_{X'}"'] & Y_{C(X')}
\end{tikzcd}
\]
Because $\xi_{(X,\sigma)} = \varepsilon_X$ and $\xi_{(X', \sigma')} = \varepsilon_{X'}$, it follows that $\xi$ is natural. It is a natural isomorphism since $\xi_{(X,\sigma)} = \varepsilon_X$ is a homeomorphism for each $(X, \sigma) \in \Coalg(\V)$.
\end{proof}

\begin{proposition} \label{prop: kappa}
There is a natural transformation $\kappa : 1_{\Alg(\H)} \to \G\F$.
\end{proposition}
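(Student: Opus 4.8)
The plan is to define, for each $(A,\alpha)\in\Alg(\H)$, a $\bal$-morphism $\kappa_{(A,\alpha)}:A\to C(Y_A)$ and show it is an $\Alg(\H)$-morphism into $\G\F(A,\alpha)$. The only natural candidate is $\zeta_A:A\to C(Y_A)$, the unit of Gelfand duality, since $\G\F(A,\alpha)$ has underlying algebra $C(Y_A)$ (indeed $\F(A,\alpha)=(Y_A,\F_\alpha)$ and $\G$ applied to this has underlying object $C(Y_{C(Y_A)})$, which we identify with $C(Y_A)$ via $\varepsilon_{Y_A}$, so after this identification $\kappa_{(A,\alpha)}=\zeta_A$). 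So first I would unwind what $\G\F(A,\alpha)$ is explicitly: its algebra structure map is $\G_{\F_\alpha}=\C(\F_\alpha)\circ\C\V(\varepsilon_{Y_A})\circ\eta_{C(Y_A)}:\H C(Y_A)\to C(Y_A)$, where $\F_\alpha=\varepsilon_{\V(Y_A)}^{-1}\circ\Y(\eta_A)^{-1}\circ\Y(\alpha)$.

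The key step is to verify the square
\[
\begin{tikzcd}[column sep = 5pc]
\H(A) \arrow[r, "\alpha"] \arrow[d, "\H(\zeta_A)"'] & A \arrow[d, "\zeta_A"] \\
\H C(Y_A) \arrow[r, "\G_{\F_\alpha}"'] & C(Y_A)
\end{tikzcd}
\]
commutes. Since $\{\Box_a\mid a\in A\}$ generates $\H(A)$ in $\bal$, it suffices to check equality on each $\Box_a$. Using Remark~\ref{rem: H on box elements}, $\H(\zeta_A)(\Box_a)=\Box_{\zeta_A(a)}$, so I must show $\G_{\F_\alpha}(\Box_{\zeta_A(a)})=\zeta_A(\alpha(\Box_a))$. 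On the right, $\zeta_A\circ\alpha$ evaluated at $\Box_a$ and then at a point $M\in Y_A$ gives the unique real $r$ with $\alpha(\Box_a)-r\in M$, i.e.\ $\rho_M(\alpha(\Box_a))$ where $\rho_M:A\to\mathbb R$ is the quotient. On the left, I would push $\Box_{\zeta_A(a)}$ through $\eta_{C(Y_A)}$, which by Lemma~\ref{lem: map from HA to C(VYA)} and the definition of $g_{C(Y_A)}$ sends it to the function $F\mapsto\inf F$ applied to $\zeta_A(a)$ (using the identification $\varepsilon_{Y_A}$ to see $\zeta_{C(Y_A)}$ as identity modulo $\varepsilon$, via Remark~\ref{rem: compositions}); then apply $\C\V(\varepsilon_{Y_A})$ and $\C(\F_\alpha)$. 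Tracking the point-evaluations and using the description of $\Y(\eta_A)$ from the proof of Theorem~\ref{thm: psi_A is 1-1} (namely $\Y(\eta_A)(F)=\rho_F$ with $\rho_F(\Box_a)=\inf\zeta_A(a)(F)$), this should collapse to exactly $\rho_M(\alpha(\Box_a))$: the composite $\F_\alpha(M)$ is the closed set $F$ with $\Y(\eta_A)(F)=\Y(\alpha)(\varepsilon_{Y_A}^{-1}\text{-image of }M)$, i.e.\ $\rho_F=\rho_M\circ\alpha$ on box-elements, and then $\inf\zeta_A(a)(F)=\rho_F(\Box_a)=\rho_M(\alpha(\Box_a))$. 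This is the computational heart of the argument.

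Finally I would check naturality: given an $\Alg(\H)$-morphism $\gamma:(A,\alpha)\to(A',\alpha')$, the relevant square is
\[
\begin{tikzcd}[column sep = 5pc]
A \arrow[r, "\kappa_{(A,\alpha)}"] \arrow[d, "\gamma"'] & C(Y_A) \arrow[d, "\C\Y(\gamma)"] \\
A' \arrow[r, "\kappa_{(A',\alpha')}"'] & C(Y_{A'})
\end{tikzcd}
\]
which commutes because $\kappa_{(A,\alpha)}=\zeta_A$ and $\zeta$ is a natural transformation $1_{\bal}\to\C\Y$; one also needs that $\G\F(\gamma)=\C\Y(\gamma)$ as a $\bal$-morphism, which is immediate from the definitions of $\F$ and $\G$ on morphisms. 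The main obstacle is the bookkeeping in the key step: there are three layers of identifications in play ($\varepsilon_X$ identifying $X$ with $Y_{C(X)}$, the identification of $Y_A$ with $\hom_{\bal}(A,\mathbb R)$, and the identification of $Y_{C(\V Y_A)}$ with $\V(Y_A)$ from Lemma~\ref{lem: map from HA to C(VYA)}), and keeping the inverses of $\varepsilon$ and $\Y(\eta_A)$ straight while chasing a point $M\in Y_A$ through $\G_{\F_\alpha}\circ\H(\zeta_A)$ requires care; Remark~\ref{rem: compositions} (that $\Y(\zeta_A)=\varepsilon_{Y_A}^{-1}$ and $\zeta_{C(X)}=\C(\varepsilon_X)^{-1}$) is the tool that makes these cancellations go through cleanly.
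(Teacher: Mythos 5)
Your overall strategy coincides with the paper's: set $\kappa_{(A,\alpha)}=\zeta_A$, verify the square $\zeta_A\circ\alpha=\G_{\F_\alpha}\circ\H(\zeta_A)$, and obtain naturality from the naturality of $\zeta$ together with $\G\F(\gamma)=\C\Y(\gamma)$. Where you diverge is in how the square is verified. The paper never descends to points: it expands $\G_{\F_\alpha}$, rewrites $\C(\varepsilon_{\V(Y_A)})^{-1}$ as $\zeta_{C\V(Y_A)}$ and $\varepsilon_{Y_A}$ as $\Y(\zeta_A)^{-1}$ via Remark~\ref{rem: compositions}, and then collapses the composite using Lemma~\ref{lem: commutativity} (applied to $\gamma=\zeta_A$) together with two applications of the naturality of $\zeta$ --- a purely formal five-line identity chase. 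You instead evaluate both composites on the generators $\Box_a$ and then on points $M\in Y_A$, using the explicit description $\rho_F(\Box_a)=\inf\zeta_A(a)(F)$ of $\Y(\eta_A)$ from the proof of Theorem~\ref{thm: psi_A is 1-1}. Your computation is correct (including the case $\F_\alpha(M)=\varnothing$, where both sides give $\rho_\varnothing(\Box_a)=1=\rho_M(\alpha(\Box_a))$), and it is arguably more transparent about \emph{why} the square commutes; the paper's version is shorter because Lemma~\ref{lem: commutativity} already packages the identity $\C\V\Y(\zeta_A)\circ\eta_A=\eta_{C(Y_A)}\circ\H(\zeta_A)$ that you are in effect re-deriving by hand on generators. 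One bookkeeping slip: you describe $\F_\alpha(M)$ as the $F$ with $\Y(\eta_A)(F)$ equal to $\Y(\alpha)$ applied to the ``$\varepsilon_{Y_A}^{-1}$-image of $M$''; since $M$ already lies in $Y_A$ and $\Y(\alpha):Y_A\to Y_{\H(A)}$, no $\varepsilon_{Y_A}^{-1}$ intervenes at that spot --- the correct statement is $\ker\rho_F=\alpha^{-1}(M)$, whence $\rho_F=\rho_M\circ\alpha$, which is exactly the conclusion you go on to use, so nothing downstream is affected.
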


\begin{proof}
We define $\kappa : 1_{\Alg(\H)} \to \G\F$ as follows. Let $(A, \alpha) \in \Alg(\H)$. We set $\kappa_{(A,\alpha)} = \zeta_A$.
\begin{equation}  \label{eqn: diagram2}
\begin{tikzcd}[column sep = 5pc]
\H(A) \arrow[r, "\alpha"] \arrow[d, "\H(\zeta_A)"'] & A \arrow[d, "\zeta_A"] \\
\H C(Y_A) \arrow[r, "\G_{\F_{\alpha}}"'] & C(Y_A)
\end{tikzcd}
\end{equation}
To see that $\zeta_A$ is an $\Alg(\H)$-morphism, we show that Diagram~(\ref{eqn: diagram2}) is commutative. We have $\F_{\alpha} = \varepsilon_{\V(Y_A)}^{-1} \circ \Y(\eta_A)^{-1} \circ \Y(\alpha)$ and so
\begin{align*}
\G_{\F_{\alpha}} &= \C(\F_{\alpha}) \circ \C\V(\varepsilon_{Y_A}) \circ \eta_{C(Y_A)} \\
&= \C(\varepsilon_{\V(Y_A)}^{-1} \circ \Y(\eta_A)^{-1} \circ \Y(\alpha)) \circ \C\V(\varepsilon_{Y_A}) \circ \eta_{C(Y_A)} \\
&= \C\Y(\alpha) \circ \C\Y(\eta_A)^{-1} \circ \C(\varepsilon_{\V(Y_A)})^{-1} \circ \C\V(\varepsilon_{Y_A}) \circ \eta_{C(Y_A)}\\
&= \C\Y(\alpha) \circ \C\Y(\eta_A)^{-1} \circ \zeta_{C\V(Y_A)} \circ \C\V(\varepsilon_{Y_A}) \circ \eta_{C(Y_A)} \\
&= \C\Y(\alpha) \circ \C\Y(\eta_A)^{-1} \circ \zeta_{C\V(Y_A)} \circ \C\V\Y(\zeta_A)^{-1} \circ \eta_{C(Y_A)}
\end{align*}
because $\C(\varepsilon_{\V(Y_A)})^{-1}=\zeta_{C\V(Y_A)}$ and $\varepsilon_{Y_A}=\Y(\zeta_A)^{-1}$ by Remark~\ref{rem: compositions}. Thus, by Lemma~\ref{lem: commutativity} and the naturality of $\zeta$ (used twice),
\begin{align*}
\G_{\F_{\alpha}} \circ \H(\zeta_A) &= \C\Y(\alpha) \circ \C\Y(\eta_A)^{-1} \circ \zeta_{C\V(Y_A)} \circ \C\V\Y(\zeta_A)^{-1} \circ \eta_{C(Y_A)} \circ \H(\zeta_A) \\
&= \C\Y(\alpha) \circ \C\Y(\eta_A)^{-1} \circ \zeta_{C\V(Y_A)} \circ \C\V\Y(\zeta_A)^{-1} \circ \C\V\Y(\zeta_A) \circ \eta_A \\
&= \C\Y(\alpha) \circ \C\Y(\eta_A)^{-1} \circ \zeta_{C\V(Y_A)} \circ \eta_A  \\
&= \C\Y(\alpha) \circ \zeta_{\H(A)} \\
&= \zeta_A \circ \alpha.
\end{align*}
Thus, $\zeta_A \circ \alpha = \G_{\F_{\alpha}} \circ \H(\zeta_A)$, and hence $\zeta_A$ is a $\Coalg(\V)$-morphism.

To show naturality, let $\gamma : (A, \alpha) \to (A', \alpha')$ be an $\Alg(\H)$-morphism. The following diagram commutes since $\zeta$ is a natural transformation.
\[
\begin{tikzcd}[column sep = 5pc]
A \arrow[d,  "\gamma"'] \arrow[r, "\zeta_{A}"] & C(Y_A) \arrow[d, "\C\Y(\gamma)"] \\
A' \arrow[r, "\zeta_{A'}"'] & C(Y_{A'}) 
\end{tikzcd}
\]
Because $\kappa_{(A,\alpha)} = \zeta_A$ and $\kappa_{(A', \alpha')} = \zeta_{A'}$, it follows that $\kappa$ is a natural transformation.
\end{proof}

\begin{theorem} \label{thm: dual adjunction}
The functors $\F$ and $\G$ yield a dual adjunction between $\Alg(\H)$ and $\Coalg(\V)$.
\end{theorem}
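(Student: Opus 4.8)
The plan is to show that the data already assembled in Propositions~\ref{prop: F}--\ref{prop: kappa} --- the contravariant functors $\F$ and $\G$, the natural isomorphism $\xi : 1_{\Coalg(\V)} \to \F\G$, and the natural transformation $\kappa : 1_{\Alg(\H)} \to \G\F$ --- constitute a dual adjunction. Regarding $\F$ and $\G$ as an ordinary adjoint pair between $\Alg(\H)$ and $\Coalg(\V)^{\mathrm{op}}$ with $\kappa$ as unit and $\xi$ as counit, by the standard unit--counit characterization of adjunctions it suffices to verify the two triangle identities
\[
\F(\kappa_{(A,\alpha)}) \circ \xi_{\F(A,\alpha)} = 1_{\F(A,\alpha)}
\qquad\text{and}\qquad
\G(\xi_{(X,\sigma)}) \circ \kappa_{\G(X,\sigma)} = 1_{\G(X,\sigma)}
\]
for all $(A,\alpha) \in \Alg(\H)$ and all $(X,\sigma) \in \Coalg(\V)$.

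For the first identity I would unwind the definitions: $\F$ acts on an $\Alg(\H)$-morphism as $\Y$ of its underlying $\bal$-morphism, $\kappa_{(A,\alpha)} = \zeta_A$, and $\F(A,\alpha) = (Y_A, \F_\alpha)$ has underlying space $Y_A$, so that $\xi_{\F(A,\alpha)} = \varepsilon_{Y_A}$ and $1_{\F(A,\alpha)} = 1_{Y_A}$. Hence the left-hand side is $\Y(\zeta_A) \circ \varepsilon_{Y_A}$, which equals $1_{Y_A}$ by Remark~\ref{rem: compositions}. Dually, for the second identity, $\G$ acts on a $\Coalg(\V)$-morphism as $\C$ of its underlying continuous map, $\xi_{(X,\sigma)} = \varepsilon_X$, and the underlying algebra of $\G(X,\sigma)$ is $C(X)$, so $\kappa_{\G(X,\sigma)} = \zeta_{C(X)}$ and $1_{\G(X,\sigma)} = 1_{C(X)}$; thus the left-hand side is $\C(\varepsilon_X) \circ \zeta_{C(X)} = 1_{C(X)}$, again by Remark~\ref{rem: compositions}.

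In short, the triangle identities for the pair $(\F, \G)$ on the categories of (co)algebras collapse to the triangle identities of Gelfand duality between $\bal$ and $\KHaus$. I do not expect a genuine obstacle at this stage: the substantive work --- well-definedness, functoriality, and naturality of $\F$, $\G$, $\xi$, and $\kappa$, and in particular the fact (Theorem~\ref{thm: psi_A is 1-1}) that $\Y(\eta_A)$ is a homeomorphism, which is what makes $\F_\alpha$ well defined --- is already behind us. The only point demanding care is bookkeeping with the contravariance, so that the composites in the triangle identities typecheck and so that one correctly identifies the relevant components of the unit and counit as $\varepsilon_{Y_A}$ and $\zeta_{C(X)}$ (rather than $\varepsilon_X$ and $\zeta_A$) before invoking Remark~\ref{rem: compositions}.
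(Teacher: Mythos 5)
Your proposal is correct and follows essentially the same route as the paper: both reduce the dual adjunction to the two triangle identities for the unit $\kappa$ and counit $\xi$ (the paper cites \cite[Thm.~IV.1.2]{Mac71} for this reduction) and then observe that these identities collapse to the equations $\Y(\zeta_A) \circ \varepsilon_{Y_A} = 1_{Y_A}$ and $\C(\varepsilon_X) \circ \zeta_{C(X)} = 1_{C(X)}$ of Remark~\ref{rem: compositions}. Your closing remark about correctly identifying the components as $\varepsilon_{Y_A}$ and $\zeta_{C(X)}$ is exactly the bookkeeping the paper carries out.
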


\begin{proof}
By \cite[Thm.~IV.1.2]{Mac71} and Propositions~\ref{prop: F}--\ref{prop: kappa}, it suffices to show that
\[
\F(\kappa_{(A,\alpha)}) \circ \xi_{\F(A,\alpha)} = 1_{\F(A,\alpha)}
\]
and
\[
\G(\xi_{(X,\sigma)}) \circ \kappa_{\G(X,\sigma)} = 1_{\G(X,\sigma)}
\]
for each $(A, \alpha) \in \Alg(\H)$ and $(X, \sigma) \in \Coalg(\V)$. We have $\kappa_{(A,\alpha)} = \zeta_A$ and $\xi_{\F(A,\alpha)} = \varepsilon_{Y_A}$.  Since $\F(\kappa_{(A,\alpha)}) = \F(\zeta_A) = \Y(\zeta_A)$ and  $1_{\F(A,\alpha)} = 1_{Y_A}$, the first equation reduces to $\Y(\zeta_A) \circ \varepsilon_{Y_A} = 1_{Y_A}$, which holds by Remark~\ref{rem: compositions}. For the second equation, $\xi_{(X,\sigma)} = \varepsilon_X$ and $\kappa_{\G(X,\sigma)} = \zeta_{C(X)}$. Since $\G(\xi_{(X,\sigma)}) = \G(\varepsilon_X) = \C(\varepsilon_X)$ and  $1_{\G(X,\sigma)} = 1_{C(X)}$, the equation $\G(\xi_{(X,\sigma)}) \circ \kappa_{\G(X,\sigma)} = 1_{\G(X,\sigma)}$ is equivalent to $\C(\varepsilon_X) \circ \zeta_{C(X)} = 1_{C(X)}$, which also holds by Remark~\ref{rem: compositions}. Therefore, $\F$ and $\G$ form a dual adjunction.
\end{proof}

\begin{definition} \label{def: Algu}
Let $\Algu(\H)$ be the full subcategory of $\Alg(\H)$ consisting of those $(A, \alpha)$ with $A \in \ubal$.
\end{definition}

\begin{corollary} \label{cor: dual equivalence}
\begin{enumerate}[$(1)$]
\item[]
\item The functors $\F$ and $\G$ restrict to a dual equivalence between $\Algu(\H)$ and $\Coalg(\V)$.
\item $\Algu(\H)$ is a reflective subcategory of $\Alg(\H)$.
\end{enumerate}
\end{corollary}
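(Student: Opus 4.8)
The plan is to derive both statements directly from the dual adjunction of Theorem~\ref{thm: dual adjunction} together with Gelfand duality, with no new constructions required. The key observation is that the unit of that adjunction, $\kappa_{(A,\alpha)} = \zeta_A$, is an isomorphism precisely when $A$ is uniformly complete (Proposition~\ref{prop: SW}), while the counit $\xi_{(X,\sigma)} = \varepsilon_X$ is always an isomorphism (Proposition~\ref{prop: xi}).

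\textbf{Part (1).} First I would observe that $\G$ factors through $\Algu(\H)$: since $\G(X,\sigma) = (C(X), \G_\sigma)$ and $C(X) \in \ubal$, we obtain a functor $\G \colon \Coalg(\V) \to \Algu(\H)$, and restricting $\F$ to the full subcategory $\Algu(\H)$ gives $\F \colon \Algu(\H) \to \Coalg(\V)$. These two functors still form a dual adjunction with the same natural transformations $\kappa$ and $\xi$. Now $\xi$ is a natural isomorphism by Proposition~\ref{prop: xi}. For $(A,\alpha) \in \Algu(\H)$ we have $A \in \ubal$, so $\kappa_{(A,\alpha)} = \zeta_A$ is an isomorphism in $\bal$; since $\H$ is a functor, a $\bal$-isomorphism between two $\H$-algebras that commutes with their structure maps has a $\bal$-inverse that also commutes with them (a one-line chase using $\H(\phi) \circ \H(\phi^{-1}) = \H(1)$), so $\kappa_{(A,\alpha)}$ is an isomorphism in $\Algu(\H)$. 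Hence, after restriction, both $\kappa$ and $\xi$ are natural isomorphisms, and a dual adjunction whose unit and counit are both natural isomorphisms is a dual equivalence.

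\textbf{Part (2).} The reflector is $\G\F \colon \Alg(\H) \to \Algu(\H)$, with reflection arrow $\kappa_{(A,\alpha)} = \zeta_A \colon (A,\alpha) \to \G\F(A,\alpha) = (C(Y_A), \G_{\F_\alpha})$; the target lies in $\Algu(\H)$ because $C(Y_A) \in \ubal$. To verify the universal property, let $(B,\beta) \in \Algu(\H)$ and let $f \colon (A,\alpha) \to (B,\beta)$ be an $\Alg(\H)$-morphism. Naturality of $\zeta$ gives $\zeta_B \circ f = \C\Y(f) \circ \zeta_A$, where $\C\Y(f)$ is the underlying $\bal$-morphism of $\G\F(f)$. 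Since $B \in \ubal$, the $\Alg(\H)$-morphism $\zeta_B = \kappa_{(B,\beta)}$ is invertible by Part~(1), so $\widetilde f := \zeta_B^{-1} \circ \G\F(f) \colon \G\F(A,\alpha) \to (B,\beta)$ is an $\Algu(\H)$-morphism satisfying $\widetilde f \circ \zeta_A = \zeta_B^{-1} \circ \zeta_B \circ f = f$. For uniqueness, note that $\zeta_A$ has uniformly dense image, hence is an epimorphism in $\bal$ by Lemma~\ref{lem: bal morphism properties}(2), and the forgetful functor $\Alg(\H) \to \bal$ is faithful; so any two morphisms out of $\G\F(A,\alpha)$ that agree after precomposition with $\zeta_A$ coincide. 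This shows that the inclusion $\Algu(\H) \hookrightarrow \Alg(\H)$ has left adjoint $\G\F$, that is, $\Algu(\H)$ is reflective in $\Alg(\H)$.

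I do not anticipate a genuine obstacle: the substance of the corollary is bookkeeping with the contravariant functors $\F$, $\G$ and the natural transformations $\zeta$, $\varepsilon$, and the only points needing a moment's care are the two small diagram chases — that the $\bal$-inverse of $\kappa_{(B,\beta)}$ is again a morphism of $\H$-algebras, and that $\widetilde f$ indeed commutes with the structure maps — both of which follow at once from functoriality of $\H$.
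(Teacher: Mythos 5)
Your proposal is correct and follows essentially the same route as the paper: both parts rest on the observation that the counit $\xi=\varepsilon$ is always a natural isomorphism while the unit $\kappa=\zeta$ is an isomorphism exactly on the uniformly complete objects, with $\G\F$ serving as the reflector and $\zeta_A$ as the reflection arrow. The only difference is presentational — where the paper cites Mac Lane (Thm.~IV.1.1 and IV.4.1) for the equivalence and the universal property, you verify the universal arrow condition directly, which is a sound expansion of the same argument.
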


\begin{proof}
(1) Let $(A, \alpha) \in \Alg(\H)$. Then $\kappa_{(A,\alpha)} = \zeta_A$ is an isomorphism iff $A \in \ubal$ iff $(A, \alpha) \in \Algu(\H)$. Consequently, $\kappa : 1_{\Algu(\H)} \to \G\F$ is a natural isomorphism by Proposition~\ref{prop: kappa}. Moreover, $\xi$ is a natural isomorphism by Proposition~\ref{prop: xi}. Therefore, $\F$ and $\G$ restrict to a dual equivalence between $\Algu(\H)$ and $\Coalg(\V)$ by \cite[Thm.~IV.4.1]{Mac71}.

(2) By (1), the functors $\F$ and $\G$ form a dual equivalence between $\Algu(\H)$ and $\Coalg(\V)$. If $(A,\alpha) \in \Alg(\H)$, then the morphism $\kappa_{(A,\alpha)}$ is a universal arrow from $(A,\alpha)$ to $\F$ by \cite[Thm.~IV.1.1]{Mac71}. Therefore, $\Algu(\H)$ is a reflective subcatgory of $\Alg(\H)$ (see \cite[p.~89]{Mac71}).
\end{proof}

\begin{proposition}
The functors $\M, \N$ yield an isomorphism between $\Algu(\H)$ and $\mubal$.
\end{proposition}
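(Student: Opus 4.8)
The plan is to piggyback on Theorem~\ref{thm: mbal = Alg(H)}, which already supplies mutually inverse isomorphisms $\M : \Alg(\H) \to \mbal$ and $\N : \mbal \to \Alg(\H)$, together with the single observation that both $\M$ and $\N$ leave the underlying object of $\bal$ (and every morphism) unchanged. First I would verify that $\M$ restricts to a functor $\Algu(\H) \to \mubal$: if $(A,\sigma) \in \Algu(\H)$, then $A \in \ubal$ by Definition~\ref{def: Algu}, and $\M(A,\sigma) = (A,\Box_\sigma)$ has the same underlying $\ell$-algebra $A$, so $\M(A,\sigma) \in \mubal$ by Definition~\ref{def:mbal}(4); since $\mubal$ is full in $\mbal$ and $\Algu(\H)$ is full in $\Alg(\H)$, this restriction is a well-defined functor. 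Symmetrically, by Lemma~\ref{lem: functor N} the functor $\N$ sends $(A,\Box) \mapsto (A,\sigma_\Box)$ without changing the underlying algebra, so it restricts to a functor $\mubal \to \Algu(\H)$.

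It then remains only to note that the equalities $\N\M = 1_{\Alg(\H)}$ and $\M\N = 1_{\mbal}$ proved in Theorem~\ref{thm: mbal = Alg(H)} are established objectwise and morphismwise and involve no choices beyond the underlying algebra; hence they hold verbatim on the full subcategories $\Algu(\H)$ and $\mubal$. Therefore the restricted functors $\M : \Algu(\H) \to \mubal$ and $\N : \mubal \to \Algu(\H)$ are mutually inverse and yield an isomorphism of categories.

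There is essentially no genuine obstacle here: the only point requiring a word of justification is that the restrictions are well defined, i.e.\ that they land in the prescribed subcategories, and this is immediate because neither $\M$ nor $\N$ alters the underlying object of $\bal$, so uniform completeness is transported in both directions. Accordingly I expect the proof to be one short paragraph invoking Theorem~\ref{thm: mbal = Alg(H)}, Lemmas~\ref{lem: functor M} and~\ref{lem: functor N}, and Definitions~\ref{def:mbal}(4) and~\ref{def: Algu}.
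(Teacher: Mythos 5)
Your proposal is correct and follows exactly the paper's argument: both note that $\M$ and $\N$ leave the underlying $\ell$-algebra unchanged, so they restrict to the full subcategories $\Algu(\H)$ and $\mubal$, and the identities $\N\M = 1$ and $\M\N = 1$ from Theorem~\ref{thm: mbal = Alg(H)} carry over verbatim. No gaps.
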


\begin{proof}
If $(A, \sigma) \in \Algu(\H)$, then $A \in \ubal$, so $\M(A, \sigma) = (A, \Box_\sigma) \in \mubal$. If $(A, \Box) \in \mubal$, then $A \in \ubal$, so $\N(A, \Box) = (A, \sigma_\Box) \in \Algu(\H)$. Therefore, the proof of Theorem~\ref{thm: mbal = Alg(H)} shows that $\M$ and $\N$ restrict to $\Algu(\H)$ and $\mubal$, respectively, to yield an isomorphism.
\end{proof}

We finish this section by giving an alternate view of the category $\Algu(\H)$. 

\begin{definition}
We let $\Hu$ be the endofunctor $\C\Y\H$ on $\ubal$. Therefore, if $A \in \ubal$, then $\Hu(A) = C(Y_{\H(A)})$ and if $\alpha : A \to A'$ is a $\ubal$-morphism, then $\Hu(\alpha) = \C\Y\H(\alpha)$. 
\end{definition}

Recall from Section~\ref{subsec: Gelfand duality} that if $\gamma : A \to B$ is a $\bal$-morphism with $B \in \ubal$, then there is a unique $\bal$-morphism $\gamma^u : C(Y_A) \to B$ with $\gamma^u \circ \zeta_A = \gamma$, where $\gamma^u = \zeta_B^{-1} \circ \C\Y(\gamma)$.
\[
\begin{tikzcd}[column sep = 5pc]
A \arrow[r, "\zeta_A"] \arrow[d, "\gamma"'] & C(Y_A) \arrow[d, "\C\Y(\gamma)"] \arrow[dl, "\gamma^u"] \\
B & C(Y_B) \arrow[l, "\zeta_B^{-1}"] 
\end{tikzcd}
\]

\begin{proposition} \label{prop: Algu(H) versus Alg(Hu)}
There is an isomorphism of categories between $\Algu(\H)$ and $\Alg(\Hu)$.
\end{proposition}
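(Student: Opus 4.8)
The plan is to use the universal property of the uniform completion recalled just before the statement: for $A\in\ubal$, postcomposition with $\zeta_{\H(A)}$ gives a bijection between $\bal$-morphisms $\Hu(A)=C(Y_{\H(A)})\to A$ and $\bal$-morphisms $\H(A)\to A$, with inverse $\gamma\mapsto\gamma^u=\zeta_A^{-1}\circ\C\Y(\gamma)$. Accordingly, on objects I would send $(A,\alpha)\in\Algu(\H)$ to $(A,\alpha^u)\in\Alg(\Hu)$, and send $(A,\beta)\in\Alg(\Hu)$ to $(A,\beta\circ\zeta_{\H(A)})\in\Algu(\H)$; on morphisms both functors act as the identity, sending a $\bal$-morphism $\gamma:A\to A'$ that is a morphism in one category to the same $\gamma$ viewed as a morphism in the other.

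First I would check well-definedness on objects. Since $\Hu(A)=C(Y_{\H(A)})$ is of the form $C(X)$, it lies in $\ubal$, so $(A,\alpha^u)$ is a legitimate $\Hu$-algebra; and $\beta\circ\zeta_{\H(A)}:\H(A)\to A$ is a $\bal$-morphism with $A\in\ubal$, so $(A,\beta\circ\zeta_{\H(A)})$ is a legitimate object of $\Algu(\H)$. The two object-level assignments are mutually inverse: $\alpha^u\circ\zeta_{\H(A)}=\alpha$ by the defining property of $\alpha^u$, and conversely $(\beta\circ\zeta_{\H(A)})^u=\beta$ by the uniqueness clause of the universal property, since $\beta$ is itself a $\bal$-morphism extending $\beta\circ\zeta_{\H(A)}$ along $\zeta_{\H(A)}$.

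Next I would verify that morphisms transfer, the only step requiring a short computation. Given an $\Algu(\H)$-morphism $\gamma:(A,\alpha)\to(A',\alpha')$, I want $\gamma\circ\alpha^u=\alpha'^u\circ\Hu(\gamma)$, where $\Hu(\gamma)=\C\Y\H(\gamma):C(Y_{\H(A)})\to C(Y_{\H(A')})$. Precomposing with $\zeta_{\H(A)}$: the left side becomes $\gamma\circ\alpha$; and by naturality of $\zeta$, $\Hu(\gamma)\circ\zeta_{\H(A)}=\C\Y\H(\gamma)\circ\zeta_{\H(A)}=\zeta_{\H(A')}\circ\H(\gamma)$, so the right side becomes $\alpha'^u\circ\zeta_{\H(A')}\circ\H(\gamma)=\alpha'\circ\H(\gamma)$. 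These agree because $\gamma$ is an $\Algu(\H)$-morphism, and since $\zeta_{\H(A)}[\H(A)]$ is uniformly dense in $\Hu(A)$ (equivalently, by the uniqueness half of the universal property applied to the two $\bal$-morphisms $\gamma\circ\alpha^u$ and $\alpha'^u\circ\Hu(\gamma)$ out of $C(Y_{\H(A)})$), the two maps coincide, so $\gamma$ is an $\Alg(\Hu)$-morphism. The reverse direction is easier and needs no density argument: if $\gamma:(A,\beta)\to(A',\beta')$ is an $\Alg(\Hu)$-morphism, then using naturality of $\zeta$ once more,
\[
\gamma\circ(\beta\circ\zeta_{\H(A)})=\beta'\circ\Hu(\gamma)\circ\zeta_{\H(A)}=\beta'\circ\zeta_{\H(A')}\circ\H(\gamma)=(\beta'\circ\zeta_{\H(A')})\circ\H(\gamma),
\]
so $\gamma$ is an $\Algu(\H)$-morphism.

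Finally, since both functors act as the identity on underlying $\bal$-morphisms, they visibly preserve identities and composition, so they are functors; combined with the two object-level identities above (and their triviality on morphisms) they are mutually inverse, giving the desired isomorphism of categories. I do not expect any genuine obstacle; the only point needing care is the transfer of the morphism condition, handled by precomposing with $\zeta_{\H(A)}$, applying naturality of $\zeta$, and invoking uniform density (or equivalently the uniqueness in the universal property).
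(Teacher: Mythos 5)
Your proposal is correct and follows essentially the same route as the paper: the same object assignments $(A,\alpha)\mapsto(A,\alpha^u)$ and $(A,\beta)\mapsto(A,\beta\circ\zeta_{\H(A)})$, identity on morphisms, and the same verification of the morphism condition by precomposing with $\zeta_{\H(A)}$, invoking naturality of $\zeta$, and using that $\zeta_{\H(A)}$ is epic. The only difference is that you spell out the (easy) reverse direction for morphisms, which the paper leaves as ``clear.''
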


\begin{proof}
We define $\A : \Algu(\H) \to \Alg(\Hu)$ on objects by sending $(A, \alpha)$ to $(A, \alpha^u)$. On morphisms, if $\gamma$ is an $\Alg(\H)$-morphism, then $\A(\gamma) = \gamma$.
\[
\begin{tikzcd}[column sep = 5pc]
\H(A) \arrow[r, "\zeta_{\H(A)}"'] \arrow[rr, bend left = 15, "\alpha"] \arrow[d, "\H(\gamma)"'] & \Hu(A) \arrow[r, "\alpha^u"'] \arrow[d, "\Hu(\gamma)"] & A \arrow[d, "\gamma"] \\
\H(A') \arrow[r, "\zeta_{\H(A')}"]  \arrow[rr, bend right = 15, "\alpha'"'] & \Hu(A') \arrow[r, "(\alpha')^u"] & A'
\end{tikzcd}
\]
To see that $\gamma$ is an $\Alg(\Hu)$-morphism, the left square of the diagram commutes by the naturality of $\zeta$. We have
\[
(\gamma \circ \alpha^u) \circ\zeta_{\H(A)} = \gamma\circ \alpha = \alpha' \circ\H(\gamma) = (\alpha')^u\circ \zeta_{\H(A')} \circ\H(\gamma) = (\alpha')^u \circ \Hu(\gamma)\circ \zeta_{\H(A)}
\]
so $\gamma \circ \alpha^u = (\alpha')^u \circ \Hu(\gamma)$ since $\zeta_{\H(A)}$ is epic. This shows that $\gamma$ is an $\Alg(\Hu)$-morphism. It then follows that $\A$ is a covariant functor.

Going in the opposite direction, we define a functor $\B : \Alg(\Hu) \to \Algu(\H)$ on objects by sending $(A, \alpha)$ to $(A, \alpha \circ \zeta_{\H(A)})$. On morphisms we send a $\Alg(\Hu)$-morphism $\gamma : A \to A'$ to itself. It is clear that $\B$ is a covariant functor.

If $(A, \alpha) \in \Algu(\H)$, then $\A(A, \alpha) = (A, \alpha^u)$, and so $\B\A(A,\alpha) = (A, \alpha^u \circ \zeta_{\H(A)}) = (A, \alpha)$. Therefore, $\B\A = 1_{\Algu(\H)}$. If $(A, \alpha)  \in \Alg(\Hu)$, then $(A, \alpha \circ \zeta_{\H(A)}) \in \Algu(\H)$, and $(\alpha \circ \zeta_{\H(A)})^u = \alpha$. Therefore, $\A\B = 1_{\Alg(\Hu)}$. Thus, $\A,\B$ yield an isomorphism of categories between $\Algu(\H)$ and $\Alg(\Hu)$.
\end{proof}

\section{$\mbal$ and $\KHF$} \label{sec: mbal and KHF}

In this section we show how to derive from our results the dual adjunction between $\mbal$ and $\KHF$ and the dual equivalence between $\mubal$ and $\KHF$ obtained in \cite{BCM20a}.

We start by recalling (see, e.g., \cite[Thm.~2.16]{BBH15}) that there is an isomorphism of categories between $\Coalg(\V)$ and $\KHF$. The isomorphism is determined by the following functors. The functor $\SS : \Coalg(\V) \to \KHF$ sends $(X, \sigma)$ to $(X, R_\sigma) \in \KHF$, where $xR_\sigma y$ if $y \in \sigma(x)$, and $\SS$ sends a $\Coalg(\V)$ morphism to itself. The functor $\T : \KHF \to \Coalg(\V)$ sends $(X,R) \in \KHF$ to $(X, \sigma_R)$, defined by $\sigma_R(x) = R[x]$, and sends a $\KHF$-morphism to itself. 

As a consequence of this and the results of the previous section, we obtain the main result of \cite{BCM20a}.

\begin{theorem} \cite[Thm.~5.3]{BCM20a} \label{thm: mubal = KHF}
There is a dual adjunction between $\mbal$ and $\KHF$ which restricts to a dual equivalence between $\mubal$ and $\KHF$.
\end{theorem}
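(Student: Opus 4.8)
The plan is to obtain Theorem~\ref{thm: mubal = KHF} by pasting together three facts already in hand: the isomorphism of categories $\mbal\cong\Alg(\H)$ of Theorem~\ref{thm: mbal = Alg(H)}, which restricts to an isomorphism $\mubal\cong\Algu(\H)$ by the proposition showing that $\M,\N$ restrict to those subcategories; the dual adjunction between $\Alg(\H)$ and $\Coalg(\V)$ of Theorem~\ref{thm: dual adjunction}, which restricts to a dual equivalence between $\Algu(\H)$ and $\Coalg(\V)$ by Corollary~\ref{cor: dual equivalence}(1); and the isomorphism of categories $\Coalg(\V)\cong\KHF$ recalled at the beginning of this section via the functors $\SS$ and $\T$. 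Composing an isomorphism of categories on either side of a dual adjunction again produces a dual adjunction (and likewise for dual equivalences), so the contravariant functors $\SS\circ\F\circ\N:\mbal\to\KHF$ and $\M\circ\G\circ\T:\KHF\to\mbal$ form a dual adjunction whose unit and counit are the $\M$-, $\N$-, $\SS$-, $\T$-images of the natural transformations $\kappa$ and $\xi$ of Propositions~\ref{prop: kappa} and~\ref{prop: xi}. Since $\xi$ is a natural isomorphism while $\kappa_{(A,\alpha)}=\zeta_A$ is an isomorphism exactly when $A\in\ubal$, and since $\M,\N,\SS,\T$ are isomorphisms of categories and hence carry natural isomorphisms to natural isomorphisms, this dual adjunction restricts to a dual equivalence between the full subcategory $\mubal$ of $\mbal$ and $\KHF$, just as in Corollary~\ref{cor: dual equivalence}(1).

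To see that this is genuinely the duality of \cite{BCM20a} and not merely some dual adjunction, I would unwind the composite functors and match them with $(A,\Box)\mapsto(Y_A,R_\Box)$ and $(X,R)\mapsto(C(X),\Box_R)$ described after Theorem~\ref{thm: mbal}. On the algebra side, $\N(A,\Box)=(A,\sigma_\Box)$, then $\F$ sends this to $(Y_A,\F_{\sigma_\Box})$ with $\F_{\sigma_\Box}=\varepsilon_{\V Y_A}^{-1}\circ\Y(\eta_A)^{-1}\circ\Y(\sigma_\Box)$, and $\SS$ turns the coalgebra map into the relation $x\,R\,y$ iff $y\in\F_{\sigma_\Box}(x)$; using $\rho_F(\Box_a)=\inf\zeta_A(a)(F)$ for $F\neq\varnothing$ and $\rho_\varnothing(\Box_a)=1$ from the proof of Theorem~\ref{thm: psi_A is 1-1} together with Proposition~\ref{prop: difficult}, this relation is identified with $R^\Box$, hence with $R_\Box$, which is exactly the content of the remark following Theorem~\ref{thm: psi_A is 1-1}. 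On the space side, $\T(X,R)=(X,\sigma_R)$ with $\sigma_R(x)=R[x]$, then $\G$ gives $(C(X),\G_{\sigma_R})$ with $\G_{\sigma_R}=\C(\sigma_R)\circ\C\V(\varepsilon_X)\circ\eta_{C(X)}$, and chasing $g_{C(X)}(f)(F)=\inf f(F)$ through $\C\V(\varepsilon_X)$ and $\C(\sigma_R)$ recovers $(\Box_R f)(x)=\inf fR[x]$ when $R[x]\neq\varnothing$ and $1$ otherwise; finally $\M$ strips the extra structure, yielding $(C(X),\Box_R)$.

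There is no genuine obstacle: the statement is a formal consequence of Sections~\ref{sec: Alg(H) and mbal}--\ref{sec: Alg(H) and Coalg(V)} together with the well-known isomorphism $\Coalg(\V)\cong\KHF$. The only mild bookkeeping is the functor identification above --- keeping track of the homeomorphisms $\varepsilon_X:X\to Y_{C(X)}$, $\varepsilon_{\V Y_A}:\V Y_A\to Y_{C(\V Y_A)}$ and $\Y(\eta_A):Y_{C(\V Y_A)}\to Y_{\H(A)}$, and of the isomorphisms $\zeta_A$ --- and since the theorem as stated only asserts the existence of the adjunction and its restriction to an equivalence, even this step can be kept brief, deferring to the remark after Theorem~\ref{thm: psi_A is 1-1} for the match of the relations.
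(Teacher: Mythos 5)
Your proposal is correct and follows essentially the same route as the paper: the paper's proof is precisely the composition $\SS\F\N$ and $\M\G\T$ of the isomorphism $\mbal\cong\Alg(\H)$ (Theorem~\ref{thm: mbal = Alg(H)}), the dual adjunction of Theorem~\ref{thm: dual adjunction} with its restriction in Corollary~\ref{cor: dual equivalence}(1), and the isomorphism $\Coalg(\V)\cong\KHF$. Your additional unwinding of the composite functors to match $\Y$ and $\C$ is not needed for the theorem itself but reproduces the content of the proposition the paper proves immediately afterwards.
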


\begin{proof}
By Theorem~\ref{thm: dual adjunction} the functors $\F$ and $\G$ form a dual adjunction between $\Alg(\H)$ and $\Coalg(\V)$. By Theorem~\ref{thm: mbal = Alg(H)}, the functors $\M,\N$ yield an isomorphism of categories between $\Alg(\H)$ and $\mbal$. The functors $\SS, \T$ yield an isomorphism of categories between $\Coalg(\V)$ and $\KHF$ \cite[Thm.~2.16]{BBH15}. We thus have the following diagram.
\[
\begin{tikzcd}
\mubal \arrow[r, hookrightarrow] & \mbal \arrow[r, shift left = .5ex, "\N"]  & \Alg(\H) \arrow[r, shift left = .5ex, "\F"] \arrow[l, shift left = .5ex, "\M"] & \Coalg(\V) \arrow[r, shift left = .5ex, "\SS"] \arrow[l, shift left = .5ex, "\G"] &  \KHF \arrow[l, shift left = .5ex, "\T"]
\end{tikzcd}
\]
Consequently, $\SS\F\N : \mbal \to \KHF$ and $\M\G\T : \KHF \to \mbal$ yield a dual adjunction which restricts to a dual equivalence between $\mubal$ and $\KHF$.
\end{proof}

\begin{proposition}
$\SS\F\N$ and $\M\G\T$ are precisely the functors $\C$ and $\Y$ yielding the dual adjunction of \cite[Thm.~5.2]{BCM20a}.
\end{proposition}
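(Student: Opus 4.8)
The plan is to unwind the definitions of all four functors involved and check that they agree on objects and on morphisms. On objects, the composite $\SS\F\N$ takes $(A,\Box)\in\mbal$ first to $(A,\sigma_\Box)\in\Alg(\H)$, then to $\F(A,\sigma_\Box)=(Y_A,\F_{\sigma_\Box})\in\Coalg(\V)$, where $\F_{\sigma_\Box}=\varepsilon_{\V(Y_A)}^{-1}\circ\Y(\eta_A)^{-1}\circ\Y(\sigma_\Box)$, and then to $(Y_A,R_{\F_{\sigma_\Box}})\in\KHF$ where $xR_{\F_{\sigma_\Box}}y$ iff $y\in\F_{\sigma_\Box}(x)$. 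I would show directly that $R_{\F_{\sigma_\Box}}$ is exactly the relation $R_\Box$ from Section~\ref{subsec: modal operators}, i.e. $xR_\Box y$ iff $0\le a\in y$ implies $\Box a\in x$. The key computation here is to identify, for $x\in Y_A$ (viewed as a homomorphism $\rho:A\to\mathbb R$ with kernel $x$), the closed set $\F_{\sigma_\Box}(x)\in\V(Y_A)$. Chasing through the identifications of Remark~\ref{rem: identifications}, the point $\Y(\sigma_\Box)(x)=\ker(\rho\circ\sigma_\Box)\in Y_{\H(A)}$ corresponds under $\Y(\eta_A)^{-1}$ to a point of $Y_{C(\V Y_A)}\cong\V(Y_A)$, and by the explicit description of $\eta_A$ in the proof of Theorem~\ref{thm: psi_A is 1-1} this closed set is precisely $\{y\in Y_A\mid x R^\Box y\}$, where I would use Proposition~\ref{prop: difficult} to recognize that $y\in\F_{\sigma_\Box}(x)$ iff $y^+\subseteq\Box^{-1}(\ker(\rho\circ\sigma_\Box))=\{a\in A\mid \Box a\in x\}$, which is exactly the condition defining $R_\Box$.

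For the other functor, $\M\G\T$ takes $(X,R)\in\KHF$ first to $(X,\sigma_R)\in\Coalg(\V)$ with $\sigma_R(x)=R[x]$, then to $\G(X,\sigma_R)=(C(X),\G_{\sigma_R})\in\Alg(\H)$ with $\G_{\sigma_R}=\C(\sigma_R)\circ\C\V(\varepsilon_X)\circ\eta_{C(X)}$, and finally to $(C(X),\Box_{\G_{\sigma_R}})\in\mbal$ with $\Box_{\G_{\sigma_R}}f=\G_{\sigma_R}(\Box_f)$ for $f\in C(X)$. Here I would compute $\G_{\sigma_R}(\Box_f)(x)$ for $x\in X$: applying $\eta_{C(X)}$ to $\Box_f=h_{C(X)}(f)$ gives $g_{C(X)}(f)\in C(\V Y_{C(X)})$, which after pulling back along $\C\V(\varepsilon_X)$ becomes the function on $\V(X)$ sending a nonempty closed set $F$ to $\inf f(F)$ and $\varnothing$ to $1$; composing with $\C(\sigma_R)$ evaluates at $\sigma_R(x)=R[x]$. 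So $\Box_{\G_{\sigma_R}}f(x)=\inf f(R[x])$ if $R[x]\ne\varnothing$ and $=1$ if $R[x]=\varnothing$, which is precisely $\Box_R f(x)$ from Section~\ref{subsec: modal operators}. Hence $\M\G\T(X,R)=(C(X),\Box_R)=\C(X,R)$ as claimed, and dually $\SS\F\N(A,\Box)=(Y_A,R_\Box)=\Y(A,\Box)$.

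On morphisms everything is essentially automatic: $\N$, $\M$, $\SS$, $\T$ all act as the identity on underlying morphisms, $\F(\gamma)=\Y(\gamma)$, and $\G(\varphi)=\C(\varphi)$, so $\SS\F\N$ sends an $\mbal$-morphism $\gamma$ to $\Y(\gamma)$ and $\M\G\T$ sends a $\KHF$-morphism $\varphi$ to $\C(\varphi)$, matching the functors of \cite[Thm.~5.2]{BCM20a}. One should also note that the dual adjunction data (the unit and counit) transport correctly: the natural transformations $\xi$ and $\kappa$ of Propositions~\ref{prop: xi} and~\ref{prop: kappa} are given by $\varepsilon$ and $\zeta$, which are exactly the natural transformations underlying the Gelfand dual adjunction, so the induced adjunction $\SS\F\N\dashv\M\G\T$ is literally the one from \cite{BCM20a}.

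I expect the main obstacle to be the object-level identification on the $\Coalg(\V)$-to-$\KHF$ side, namely pinning down $\F_{\sigma_\Box}(x)$ as a concrete closed subset of $Y_A$. This requires carefully tracking three nested identifications (the homeomorphisms $\varepsilon$, the identification $Y_{C(Z)}\cong Z$, and the identification $Y_B\cong\hom_{\bal}(B,\mathbb R)$) and then invoking Proposition~\ref{prop: difficult} at the right moment; the $\KHF$-to-$\mbal$ side is a more transparent direct evaluation. Once both object identifications are in hand, matching the morphism assignments and the adjunction data is routine bookkeeping.
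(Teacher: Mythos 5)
Your proposal is correct and follows the same overall strategy as the paper: unwind both composites on objects, reduce the morphism part to the observation that $\N,\M,\SS,\T$ are identities on underlying maps while $\F=\Y$ and $\G=\C$ on morphisms, and do the real work in identifying $R_{\F_{\sigma_\Box}}$ with $R_\Box$ and $\Box_{\G_{\sigma_R}}$ with $\Box_R$. Your treatment of the $\M\G\T=\C$ half is essentially verbatim the paper's computation. The one place where you diverge is the identification of $\F_{\sigma_\Box}(x)$: the paper proves $R_\Box=R_{\F_{\sigma_\Box}}$ by a two-sided argument, using Urysohn's lemma together with uniform density of $\zeta_A[A]$ for the inclusion $R_\Box\subseteq R_{\F_{\sigma_\Box}}$ and a direct evaluation of $g_A$ for the converse, whereas you propose to pin down the closed set $\F_{\sigma_\Box}(x)$ outright by reusing the surjectivity construction in the proof of Theorem~\ref{thm: psi_A is 1-1} and Proposition~\ref{prop: difficult}. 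That route works and is arguably cleaner, but note that the proof of Theorem~\ref{thm: psi_A is 1-1} only establishes one of the two inclusions you need: it shows that $zR^\Box y$ implies $y\in F$ (where $z=\sigma_\Box^{-1}(x)$ and $F=\{M\in Y_A\mid S\subseteq M\}$), so you must supply the reverse inclusion yourself. This is a short computation --- if $y\supseteq S$ and $0\le a\in y$, then $(a-\rho(\Box_a))^-\in y$ gives $\rho(\Box_a)\le\zeta_A(a)(y)=0$, while (F3) gives $\rho(\Box_a)\ge 0$, so $\Box_a\in z$ --- but it is not literally quotable from the paper, so it should be written out. You should also dispose of the case $\rho(\Box_0)=1$ separately (there $\F_{\sigma_\Box}(x)=\varnothing=R_\Box[x]$, since $0\in y^+$ for every $y$). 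With those two small additions your argument is complete; the closing remark about $\xi$ and $\kappa$ transporting the adjunction data is extra content beyond what the proposition asserts, but it is correct.
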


\begin{proof}
Let $(A, \Box) \in \mbal$. Then $\Y(A,\Box) = (Y_A, R_\Box)$, where we recall from Section~\ref{subsec: modal operators} that $R_\Box$ is defined by $xR_\Box y$ if $y^+ \subseteq \Box^{-1} x$. We have $\N(A, \Box) = (A, \sigma_\Box)$, which satisfies $\sigma_\Box(\Box_a) = \Box a$ for all $a \in A$. Then $\F(A, \sigma_\Box) = (Y_A, \F_{\sigma_\Box})$, where we recall that $\F_{\sigma_\Box} = \varepsilon_{\V(Y_A)}^{-1} \circ \Y(\eta_A)^{-1} \circ \Y(\sigma_\Box)$. Finally, $\SS$ sends this to $(Y_A, R_{\F_{\sigma_\Box}})$, where $xR_{\F_{\sigma_\Box}} y$ if $y \in \F_{\sigma_\Box}(x)$. Let $x \in Y_A$ and $F = \F_{\sigma_\Box}(x) \in \V(Y_A)$. If $M = \varepsilon_{\V(Y_A)}(F) \in Y_{C(\V Y_A)}$, then $M = \{ g \in C(\V Y_A) \mid g(F) = 0\}$ and
\[
\Y(\eta_A)(M) = \eta_A^{-1}(M) = \sigma_\Box^{-1}(x) = \Y(\sigma_\Box)(x).
\]
We show that $R_\Box = R_{\F_{\sigma_\Box}}$. Suppose that $xR_\Box y$, so $\Box y^+ \subseteq x$. To see that $x R_{\F_{\sigma_\Box}} y$, we need to show that $y \in F$. If not, then by Urysohn's lemma and the fact that $\zeta_A[A]$ is uniformly dense in $C(Y_A)$, there is $a \in A$ with $\zeta_A(a)(y) = 0$ and $\zeta_A(a)[F] \ge 1/2$. By replacing $a$ by $a^+$ we may assume that $a \ge 0$. Since $\zeta_A(a)(y) = 0$, we have $a \in y$. Therefore, $\Box a \in x$. This means $\sigma_\Box(\Box_a) \in x$, so $\Box_a \in \sigma_\Box^{-1}(x) = \eta_A^{-1}(M)$. Thus, $\eta_A(\Box_a) \in M$, so $g_A(a) \in M$. Therefore, $\inf g_A(\zeta_A(a))(F) = 0$, which is false by construction of $a$. This shows $y \in F$. 

Conversely, if $xR_{\F_{\sigma_\Box}} y$, then $y \in F$. Let $a \in y^+$. Then $\inf g_A(\zeta_A(a))(F) = 0$ because $a \in y$ and $a \ge 0$. Therefore, $\eta_A(\Box_a) \in M$, so $\Box_a \in \eta_A^{-1}(M) = \sigma_\Box^{-1}(x)$. Thus, $\Box a = \sigma_\Box(\Box_a) \in x$. This shows $\Box y^+ \subseteq x$, so $x R_\Box y$. This completes the proof that $R_{\F_{\sigma_\Box}} = R_\Box$. Therefore, $\Y$ and $\SS\F\N$ agree on the objects of $\mbal$. For morphisms, if $\alpha : (A, \Box) \to (A', \Box')$ is an $\mbal$-morphism, then $\SS\F\N(\alpha) = \SS\F(\alpha) = \SS(\Y(\alpha)) = \Y(\alpha)$. Thus, $\SS\F\N = \Y$.

In the opposite direction, if $(X, R) \in \KHF$, we show that $\C(X,R) = \M\G \T(X,R)$. First, $\C(X, R) = (C(X), \Box_R)$, where we recall from Section~\ref{subsec: modal operators} that $\Box_Rf$ is given by
\[
(\Box_Rf)(x) = \left\{ \begin{array}{cc} \inf fR[x] & \textrm{if }R[x] \ne \varnothing \\ 1 & \textrm{if } R[x] = \varnothing. \end{array} \right.
\]
The functor $\T$ sends $(X,R)$ to $(X, \sigma_R)$, where $\sigma_R(x) = R[x]$. Then $\G$ sends this to $(C(X), \G_{\sigma_R})$, where we recall that $\G_{\sigma_R} = \C(\sigma_R) \circ \C\V(\varepsilon_X) \circ \eta_{C(X)}$. Finally, $(C(X), \G_{\sigma_R})$ is sent by $\M$ to $(C(X), \Box_{\G_{\sigma_R}})$, where $\Box_{\G_{\sigma_R}}f = \G_{\sigma_R}(\Box_f)$. We have
\begin{align*}
\G_{\sigma_R}(\Box_f) &= \C(\sigma_R)(\C\V(\varepsilon_X)(\eta_{C(X)}(\Box_f))) \\
&= \C(\sigma_R)(\C\V(\varepsilon_X)(g_{C(X)}(f))) \\
&= \C(\sigma_R)(g_{C(X)}(f) \circ \V(\varepsilon_X)) \\
&= g_{C(X)}(f) \circ \V(\varepsilon_X) \circ \sigma_R.
\end{align*}
Let $x \in X$. Then $\sigma_R(x) = R[x]$ and $\V(\varepsilon_X)(R[x]) = \varepsilon_X(R[x])$. Therefore, since $f = \zeta_{C(X)}(f) \circ \varepsilon_X$ by Remark~\ref{rem: compositions}, we have
\begin{align*}
g_{C(X)}(f)(\varepsilon_XR[x]) &= \left\{ \begin{array}{cc} \inf \zeta_{C(X)}(f)(\varepsilon_XR[x]) & \textrm{if } R[x] \ne \varnothing \\ 1 & \textrm{if } R[x] = \varnothing \end{array} \right. \\
&= \left\{ \begin{array}{cc} \inf fR[x] & \textrm{if } R[x] \ne \varnothing \\ 1 & \textrm{if } R[x] = \varnothing \end{array} \right. \\
&= (\Box_Rf)(x).
\end{align*}
Thus, $\C$ and $\M\G\T$ agree on objects of $\KHF$. If $\sigma : (X, R) \to (X', R')$ is a $\KHF$-morphism, then $\M\G\T(\sigma) = \M\C(\sigma) = \C(\sigma)$. Consequently, $\M\G\T = \C$.
\end{proof}

We conclude this section with the following diagram showing the relationship between the various categories we have considered, where the curved vertical arrows are reflections.

\[
\begin{tikzcd}[column sep = 5pc]
\mbal \arrow[r, shift left = .5ex, "\N"]  \arrow[d, bend right = 30] & \Alg(\H) \arrow[l, shift left = .5ex, "\M"] \arrow[d, bend right = 30] & \\
\mubal \arrow[u, hookrightarrow] \arrow[r, shift left = .5ex, "\N"] \arrow[d, shift left = .5ex, "\Y"] & \Algu(\H)  \arrow[u, hookrightarrow] \arrow[r, shift left = .5ex, "\A"] \arrow[l, shift left = .5ex, "\M"] \arrow[d, shift left = .5ex, "\F"]  &  \Alg(\Hu) \arrow[dl, bend left = 20, shift left = .5ex, "\F\B"] \arrow[l, shift left = .5ex,  "\B"] \\
\KHF \arrow[r, shift left = .5ex, "\T"] \arrow[u, shift left = .5ex, "\C"] & \Coalg(\V) \arrow[l, shift left = .5ex, "\SS"] \arrow[ur, shift left = .5ex, bend right = 20, "\A\G"] 
\arrow[u, shift left = .5ex, "\G"] & \\
\end{tikzcd}
\]

\section{The serial case} \label{sec: serial case}

We recall that a binary relation $R$ on a set $X$ is serial provided that $R[x]\ne\varnothing$ for each $x\in X$. If $(X,R)\in\KHF$ with $R$ serial, then we can replace $\V(X)$ with $\V^*(X)$ in our considerations. It was shown in \cite[Prop.~7.2]{BCM20a} that the binary relation $R_\Box$ in the dual compact Hausdorff frame of $(A,\Box)\in\mbal$ is serial iff $\Box 0=0$. We thus arrive at the following definition, the notation of which is motivated by modal logic, where the seriality axiom is denoted by $\sf D$ (see, e.g., \cite{CZ97}). 

\begin{definition} \cite[Sec.~7]{BCM20a}
\begin{enumerate}
\item Let $\mbal^{\sf D}$ be the full subcategory of $\mbal$ consisting of those $(A, \Box) \in \mbal$ with $\Box 0 = 0$.
\item Let $\mubal^{\sf D}$ be the full subcategory of $\mbal^{\sf D}$ consisting of those $(A, \Box) \in \mbal^{\sf D}$ with $A\in\ubal$.
\item Let $\KHF^{\sf D}$ be the full subcategory of $\KHF$ consisting of those $(X,R)$ for which $R$ is a serial relation.
\end{enumerate}
\end{definition}

As was pointed out in \cite[Rem.~3.12]{BCM20a}, when $\Box 0=0$, the axioms (M2), (M4), and (M5) simplify to the following axioms:
\begin{enumerate}
\item[(M2$^*$)] $\Box r = r$.
\item[(M4$^*$)] $\Box (a+r)=\Box a + r$.
\item[(M5$^*$)] $\Box (r a)= r(\Box a)$ provided $r\ge 0$.
\end{enumerate}
Moreover, (M2$^*$) follows from (M4$^*$) by setting $a=0$.

In \cite[Sec.~7]{BCM20a} we showed that the functors $\C$ and $\Y$ restrict to yield a dual adjunction between $\mbal^{\sf D}$ and $\KHF^{\sf D}$, which further restricts to a dual equivalence between $\mubal^{\sf D}$ and $\KHF^{\sf D}$. In this section we briefly outline how to derive this result from our considerations by simplifying the definition of $\H$ to produce a functor $\H^*$ such that $\Alg(\H^*)$ is isomorphic to $\mbal^{\sf D}$.  

\begin{definition}
Let $A \in \bal$. 
\begin{enumerate}
\item Let $I^*_A$ be the archimedean $\ell$-ideal of $F(A)$ generated by the following classes of elements.
\begin{description}
\item[(a)] $f_A(a \wedge b) - f_A(a)\wedge f_A(b)$;
\item[(c)] $f_A(a^+) - f_A(a)^+$;
\item[(d${}^*$)] $f_A(a + r)  - f_A(a) - r$;
\item[(e${}^*$)] $f_A(r a) - r f_A(a) $ if $0 \le r$.
\end{description}
\item Let $\H^*(A) = F(A)/I^*_A$ and $h^*_A : A \to \H^*(A)$ be the composition of $f^*_A$ with the quotient map $\pi : F(A) \to \H^*(A)$. 
\item For $a \in A$ let $\Box^*_a = h^*_A(a)$.
\item Let $\V^*$ be the endofunctor on $\KHaus$ that sends $X$ to the subspace $\V^*(X) = \V(X) \setminus \{\varnothing\}$ of $\V(X)$. 
\end{enumerate}
\end{definition}

The table below compares the relations in $\H^*(A)$ to those of $\H(A)$.

\bigskip

\begin{tabular}{|p{.45\linewidth}|p{.45\linewidth}|}
\hline
Relations for $\H^*(A)$ & Relations for $\H(A)$ \\ \hline
\begin{enumerate}
\item $\Box^*_{a \wedge b} = \Box^*_a \wedge \Box^*_b$
\item \quad ---------
\item $\Box^*_{a^+} = (\Box^*_a)^+$
\item $\Box^*_{a + r} = \Box^*_a + r$
\item $\Box^*_{ra} = r\Box^*_a$ if $0 \le r$
\end{enumerate}
&
\begin{enumerate}
\item $\Box_{a \wedge b} = \Box_a \wedge \Box_b$
\item $\Box_r = r + (1-r)\Box_0$
\item $\Box_{a^+} = (\Box_a)^+$
\item $\Box_{a + r} = \Box_a + \Box_r - \Box_0$
\item $\Box_{ra} = \Box_r \Box_a$ if $0 \le r$
\end{enumerate}
\\ \hline
\end{tabular}

\bigskip

\begin{remark} \label{rem: H*}
\begin{enumerate}
\item[]
\item If we set $r = 0$ in (5) we see that $\Box^*_0 = 0$. Furthermore, setting $a = 0$ in (4) yields $\Box^*_r = r$.
\item From the relations above it follows that $\H^*(A)$ is the quotient of $\H(A)$ by the archimedian $\ell$-ideal of $\H(A)$ generated by $\Box_0$. Consequently, if $(A, \alpha) \in \Alg(\H)$ with $\alpha(\Box_0) = 0$, then there is an induced object $(A, \alpha^*) \in \Alg(\H^*)$.
\end{enumerate}
\end{remark}

\begin{theorem}
There is an isomorphism of categories between $\Alg(\H^*)$ and $\mbal^{\sf D}$.
\end{theorem}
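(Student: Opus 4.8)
The plan is to follow the proof that $\Alg(\H)$ is isomorphic to $\mbal$ (Theorem~\ref{thm: mbal = Alg(H)}) almost verbatim, constructing mutually inverse functors $\M^* : \Alg(\H^*) \to \mbal^{\sf D}$ and $\N^* : \mbal^{\sf D} \to \Alg(\H^*)$, each acting as the identity on underlying $\bal$-morphisms.

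First I would record the serial analogue of Lemma~\ref{lem: norm of box a}: if $(A,\Box)\in\mbal^{\sf D}$, then $\Box : (A, w_A) \to (A, \|\cdot\|)$ is a weighted set morphism. This is in fact simpler than Lemma~\ref{lem: norm of box a}, since $\Box 0 = 0$ forces $\Box r = r$ for all $r\in\mathbb{R}$ by (M2$^*$); hence if $r = \|a\|$, then $-r \le a \le r$ gives $-r = \Box(-r) \le \Box a \le \Box r = r$, so $\|\Box a\| \le \|a\| \le w_A(a)$.

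With this in hand, $\N^*$ is built exactly as in Lemma~\ref{lem: functor N}: by Theorem~\ref{thm: UMP} the weighted set morphism $\Box$ factors through $f_A : A \to F(A)$ to give a $\bal$-morphism $\tau : F(A) \to A$ with $\tau f_A(a) = \Box a$, and checking that the generators of type (a), (c), (d$^*$), (e$^*$) of $I_A^*$ lie in $\ker(\tau)$ — which is precisely axioms (M1), (M3), (M4$^*$), (M5$^*$) — shows $I_A^* \subseteq \ker(\tau)$; thus $\tau$ descends to $\sigma_\Box : \H^*(A) \to A$ with $\sigma_\Box(\Box^*_a) = \Box a$. We set $\N^*(A,\Box) = (A, \sigma_\Box)$ and send an $\mbal^{\sf D}$-morphism to itself, the verification that this is an $\Alg(\H^*)$-morphism being the same as in Lemma~\ref{lem: functor N} once one notes the analogue of Remark~\ref{rem: H on box elements}, namely $\H^*(\alpha)(\Box^*_a) = \Box^*_{\alpha(a)}$, and uses that the $\Box^*_a$ generate $\H^*(A)$. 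Conversely $\M^*$ sends $(A,\sigma)\in\Alg(\H^*)$ to $(A, \Box_\sigma)$ with $\Box_\sigma a = \sigma(\Box^*_a)$; the relations (1), (3), (4), (5) listed in the table translate under the $\bal$-morphism $\sigma$ into (M1), (M3), (M4$^*$), (M5$^*$) for $\Box_\sigma$, and $\Box^*_0 = 0$ (Remark~\ref{rem: H*}(1)) gives $\Box_\sigma 0 = \sigma(0) = 0$, so that, as recalled in the discussion preceding the definition of $\H^*$, $(A,\Box_\sigma)\in\mbal$ with $\Box 0 = 0$, i.e.\ $(A,\Box_\sigma)\in\mbal^{\sf D}$. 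That $\M^*$ and $\N^*$ are functorial and mutually inverse then follows exactly as in the proof of Theorem~\ref{thm: mbal = Alg(H)}: $\sigma_{\Box_\sigma}(\Box^*_a) = \Box_\sigma a = \sigma(\Box^*_a)$ with the $\Box^*_a$ generating $\H^*(A)$, and $\Box_{\sigma_\Box} a = \sigma_\Box(\Box^*_a) = \Box a$.

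I do not expect a genuine obstacle here; the argument is the content of Section~\ref{sec: Alg(H) and mbal} with the shorter axiom list, and the only point to watch is that the generators (d$^*$), (e$^*$) of $I_A^*$ carry no $f_A(0)$ term (in contrast to (d), (e) of $I_A$), so no hidden appearance of $\Box 0$ slips into the verification that $I_A^* \subseteq \ker(\tau)$. Alternatively, one can avoid the explicit construction altogether by invoking Remark~\ref{rem: H*}(2): since $\H^*(A)$ is the quotient of $\H(A)$ by the archimedean $\ell$-ideal generated by $\Box_0$, the category $\Alg(\H^*)$ is isomorphic to the full subcategory of $\Alg(\H)$ on those $(A,\alpha)$ with $\alpha(\Box_0) = 0$, which Theorem~\ref{thm: mbal = Alg(H)} carries to the full subcategory of $\mbal$ on those $(A,\Box)$ with $\Box 0 = 0$, i.e.\ $\mbal^{\sf D}$.
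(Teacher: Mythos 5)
Your proposal is correct and matches the paper's proof, which likewise just adapts Lemmas~\ref{lem: functor M} and~\ref{lem: functor N} and Theorem~\ref{thm: mbal = Alg(H)} to the shorter axiom list; your ``alternative'' via Remark~\ref{rem: H*}(2) is in fact exactly how the paper constructs the functor $\mbal^{\sf D} \to \Alg(\H^*)$ (first forming $\sigma_\Box : \H(A) \to A$, noting $\sigma_\Box(\Box_0)=0$, and descending to the quotient $\H^*(A)$). The only detail worth making explicit in your main route is the converse of the simplification noted in Section~\ref{sec: serial case}: that (M1), (M3), (M4$^*$), (M5$^*$) together with $\Box 0 = 0$ recover the full axioms (M2), (M4), (M5), so that $(A,\Box_\sigma)$ really lies in $\mbal^{\sf D}$ and not merely in some weaker class.
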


\begin{proof}
The functor $\Alg(\H^*) \to \mbal^{\sf D}$ is defined essentially the same as in Lemma~\ref{lem: functor M}. To define the functor in the other direction, if $(A, \Box) \in \mbal^{\sf D}$, then the induced $\bal$-morphism $\sigma_\Box : \H(A) \to A$ satisfies $\sigma_\Box(\Box_0) = 0$, so induces an object $(A, \sigma^*_\Box) \in \Alg(\H^*)$ by Remark~\ref{rem: H*}(2), which satisfies $\sigma^*_\Box(\Box^*_a) = \Box a$ for each $a \in A$. This gives the functor $\mbal^{\sf D} \to \Alg(\H^*)$. The proof that these functors yield an isomorphism is essentially the same as that of Theorem~\ref{thm: mbal = Alg(H)}. 
\end{proof}

In parallel with Definition~\ref{def: Algu}, let $\Algu(\H^*)$ be the full subcategory of $\Alg(\H^*)$ consisting of those $(A, \alpha)$ with $A \in \ubal$. The proof of the following result is similar to that of Theorem~\ref{thm: dual adjunction} and Corollary~\ref{cor: dual equivalence}(1), with small changes similar to those of the previous theorem. We therefore leave out the details.

\begin{theorem} \label{thm: Alg(H*) = Coalg(V*)}
There is a dual adjunction between $\Alg(\H^*)$ and $\Coalg(\V^*)$ which restricts to a dual equivalence between $\Algu(\H^*)$ and $\Coalg(\V^*)$.
\end{theorem}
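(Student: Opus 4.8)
\emph{Proof proposal.} The plan is to transcribe the development of Sections~\ref{sec : H and Vietoris} and~\ref{sec: Alg(H) and Coalg(V)} with $\V$ replaced by $\V^*$ and $\H$ by $\H^*$. First I would check that $\V^*$ is a genuine endofunctor on $\KHaus$: the set $\{\varnothing\}$ equals $\Box_\varnothing$, which is clopen in $\V(X)$ (its complement is $\Diamond_X$), so $\V^*(X)$ is a clopen subspace of $\V(X)$ and hence compact Hausdorff, and $\V(\varphi)$ restricts to a continuous map $\V^*(X)\to\V^*(Y)$ since the image of a nonempty closed set under a continuous map of compact Hausdorff spaces is again nonempty closed.

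The one genuinely new structural input is the analogue of Theorem~\ref{thm: psi_A is 1-1}: for $A\in\bal$ the Yosida space of $\H^*(A)$ is homeomorphic to $\V^*(Y_A)$. I would deduce this from Remark~\ref{rem: H*}(2): $\H^*(A)$ is the quotient of $\H(A)$ by the archimedean $\ell$-ideal $J$ generated by $\Box_0$, so $Y_{\H^*(A)}$ is homeomorphic to the closed subspace $Z_\ell(J)=\{x\in Y_{\H(A)}\mid \Box_0\in x\}$ of $Y_{\H(A)}$. Under the homeomorphism $Y_{\H(A)}\cong\V(Y_A)$ of Theorem~\ref{thm: psi_A is 1-1}, the point $x=\ker(\rho_F)$ satisfies $\Box_0\in x$ iff $\rho_F(\Box_0)=0$, and by the description of $\rho_F$ one has $\rho_F(\Box_0)=\inf\zeta_A(0)(F)=0$ when $F\ne\varnothing$ and $\rho_F(\Box_0)=1$ when $F=\varnothing$; thus $Z_\ell(J)$ corresponds exactly to $\V^*(Y_A)$. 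Alternatively one may repeat the proof of Theorem~\ref{thm: psi_A is 1-1} directly: it is in fact shorter here, because $\Box^*_0=0$ forces $\rho(\Box^*_0)=0$ so the case $\rho(\Box_0)=1$ disappears. Either way, one records that the map $g^*_A:A\to C(\V^* Y_A)$ given by $g^*_A(a)(F)=\inf\zeta_A(a)(F)$ is a weighted set morphism $(A,w_A)\to(C(\V^* Y_A),\|\cdot\|)$ (the continuity computation of Lemma~\ref{lem: g is a weighted set map} applies, with $\Box_\varnothing\cap\V^*(Y_A)=\varnothing$), it factors as $\eta^*_A\circ h^*_A$ for a unique $\bal$-morphism $\eta^*_A:\H^*(A)\to C(\V^* Y_A)$ with uniformly dense image, and $\Y(\eta^*_A)$ is the desired homeomorphism.

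With this in hand the functors and natural transformations are built exactly as in Section~\ref{sec: Alg(H) and Coalg(V)}. For $(A,\alpha)\in\Alg(\H^*)$ set $\F^*(A,\alpha)=(Y_A,\F^*_\alpha)$ with $\F^*_\alpha=\varepsilon_{\V^*(Y_A)}^{-1}\circ\Y(\eta^*_A)^{-1}\circ\Y(\alpha):Y_A\to\V^*(Y_A)$ and $\F^*(\gamma)=\Y(\gamma)$; for $(X,\sigma)\in\Coalg(\V^*)$ set $\G^*(X,\sigma)=(X,\G^*_\sigma)$ with $\G^*_\sigma=\C(\sigma)\circ\C\V^*(\varepsilon_X)\circ\eta^*_{C(X)}$ and $\G^*(\varphi)=\C(\varphi)$. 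One first proves the $\H^*$-version of Lemma~\ref{lem: commutativity} (the square involving $h^*_A$, $\eta^*_A$, $g^*_A$, $\H^*(\gamma)$ and $\C\V^*\Y(\gamma)$ commutes), using naturality of $\zeta$ and the fact that $\V^*(\Y(\gamma))$ is the restriction of $\V(\Y(\gamma))$; here the awkward $F=\varnothing$ case of Lemma~\ref{lem: commutativity} is simply absent. Then, as in Propositions~\ref{prop: xi} and~\ref{prop: kappa}, $\xi^*_{(X,\sigma)}=\varepsilon_X$ defines a natural isomorphism $1_{\Coalg(\V^*)}\to\F^*\G^*$ and $\kappa^*_{(A,\alpha)}=\zeta_A$ defines a natural transformation $1_{\Alg(\H^*)}\to\G^*\F^*$, by the same diagram chases with $\V^*$-subscripts. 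The triangle identities $\F^*(\kappa^*_{(A,\alpha)})\circ\xi^*_{\F^*(A,\alpha)}=1$ and $\G^*(\xi^*_{(X,\sigma)})\circ\kappa^*_{\G^*(X,\sigma)}=1$ reduce, exactly as in Theorem~\ref{thm: dual adjunction}, to the identities $\Y(\zeta_A)\circ\varepsilon_{Y_A}=1_{Y_A}$ and $\C(\varepsilon_X)\circ\zeta_{C(X)}=1_{C(X)}$ of Remark~\ref{rem: compositions}, giving the dual adjunction. Finally $\kappa^*_{(A,\alpha)}=\zeta_A$ is an isomorphism iff $A\in\ubal$ iff $(A,\alpha)\in\Algu(\H^*)$, while $\xi^*$ is always a natural isomorphism, so the adjunction restricts to a dual equivalence between $\Algu(\H^*)$ and $\Coalg(\V^*)$, as in Corollary~\ref{cor: dual equivalence}(1).

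I expect the only point requiring real care, everything else being a faithful transcription of Sections~\ref{sec : H and Vietoris} and~\ref{sec: Alg(H) and Coalg(V)}, to be the identification $Y_{\H^*(A)}\cong\V^*(Y_A)$ — that is, making precise that quotienting $\H(A)$ by the archimedean $\ell$-ideal generated by $\Box_0$ corresponds on Yosida spaces to deleting the point $\varnothing$ from $\V(Y_A)$. Once that is pinned down, the coalgebraic machinery transfers essentially verbatim.
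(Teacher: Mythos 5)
Your proposal is correct and follows exactly the route the paper intends: the paper omits the details, stating only that the argument is a transcription of Theorem~\ref{thm: dual adjunction} and Corollary~\ref{cor: dual equivalence}(1) with the modifications of the preceding theorem, and your write-up supplies precisely those details. In particular you correctly isolate the one genuinely new ingredient, the homeomorphism $Y_{\H^*(A)}\cong\V^*(Y_A)$, and your derivation of it from Remark~\ref{rem: H*}(2) (quotienting by the archimedean $\ell$-ideal generated by $\Box_0$ corresponds to passing to the closed set where $\rho_F(\Box_0)=0$, i.e.\ deleting $\varnothing$) is exactly right.
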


\begin{theorem}
There is a dual adjunction between $\mbal^{\sf D}$ and $\KHF^{\sf D}$ which restricts to a dual equivalence between $\mubal^{\sf D}$ and $\KHF^{\sf D}$.
\end{theorem}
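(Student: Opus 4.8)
The plan is to mirror the proof of Theorem~\ref{thm: mubal = KHF}, with $\H$, $\V$, $\mbal$, $\KHF$ replaced throughout by their serial counterparts $\H^*$, $\V^*$, $\mbal^{\sf D}$, $\KHF^{\sf D}$. By the theorem of this section establishing that $\Alg(\H^*)$ is isomorphic to $\mbal^{\sf D}$, and by Theorem~\ref{thm: Alg(H*) = Coalg(V*)}, we already have an isomorphism of categories between $\Alg(\H^*)$ and $\mbal^{\sf D}$ together with a dual adjunction between $\Alg(\H^*)$ and $\Coalg(\V^*)$ that restricts to a dual equivalence between $\Algu(\H^*)$ and $\Coalg(\V^*)$. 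Thus the only genuinely new ingredient needed is an isomorphism of categories between $\Coalg(\V^*)$ and $\KHF^{\sf D}$, after which the desired dual adjunction is obtained by composition exactly as in Theorem~\ref{thm: mubal = KHF}.

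First I would establish the isomorphism $\Coalg(\V^*)\cong\KHF^{\sf D}$ by checking that the functors $\SS$ and $\T$ of Section~\ref{sec: mbal and KHF} restrict. Since $\{\varnothing\}=\Box_\varnothing$ is open in $\V(X)$, the subspace $\V^*(X)$ is closed in $\V(X)$, hence compact Hausdorff, and $\V(\varphi)$ carries nonempty closed sets to nonempty closed sets, so a $\V^*$-coalgebra is precisely a $\V$-coalgebra $(X,\sigma)$ whose structure map lands in $\V^*(X)$, i.e. one with $\sigma(x)=R_\sigma[x]\ne\varnothing$ for every $x\in X$. Hence $\SS(X,\sigma)=(X,R_\sigma)$ lies in $\KHF^{\sf D}$; conversely, if $(X,R)\in\KHF^{\sf D}$ then $\sigma_R(x)=R[x]$ is a $\V^*$-coalgebra structure, so $\T(X,R)\in\Coalg(\V^*)$. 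Since $\SS$ and $\T$ act as the identity on morphisms, they restrict to an isomorphism between $\Coalg(\V^*)$ and $\KHF^{\sf D}$.

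With this in hand, composing the isomorphism $\mbal^{\sf D}\to\Alg(\H^*)$ with the functor $\Alg(\H^*)\to\Coalg(\V^*)$ of Theorem~\ref{thm: Alg(H*) = Coalg(V*)} and the isomorphism $\Coalg(\V^*)\to\KHF^{\sf D}$ gives a contravariant functor $\mbal^{\sf D}\to\KHF^{\sf D}$, and composing in the opposite directions gives a contravariant functor $\KHF^{\sf D}\to\mbal^{\sf D}$; transporting the unit and counit supplied by Theorem~\ref{thm: Alg(H*) = Coalg(V*)} along the two isomorphisms yields a dual adjunction between $\mbal^{\sf D}$ and $\KHF^{\sf D}$. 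Restricting to uniformly complete objects, the isomorphism $\Alg(\H^*)\cong\mbal^{\sf D}$ cuts down to an isomorphism $\Algu(\H^*)\cong\mubal^{\sf D}$, and Theorem~\ref{thm: Alg(H*) = Coalg(V*)} gives a dual equivalence between $\Algu(\H^*)$ and $\Coalg(\V^*)$, so we obtain a dual equivalence between $\mubal^{\sf D}$ and $\KHF^{\sf D}$. As in the proposition following Theorem~\ref{thm: mubal = KHF}, one can further check that the resulting functors are the appropriate restrictions of $\C$ and $\Y$, recovering the form of the result stated in \cite[Sec.~7]{BCM20a}.

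Granting Theorem~\ref{thm: Alg(H*) = Coalg(V*)} and the isomorphism $\Alg(\H^*)\cong\mbal^{\sf D}$, there is little difficulty left, and so the main obstacle is really a bookkeeping one: matching up the seriality conditions across the three categories, namely that ``structure map avoids $\varnothing$'' on the coalgebra side, ``$R$ serial'' on the frame side, and ``$\Box 0=0$'' on the algebra side all correspond (the last correspondence being \cite[Prop.~7.2]{BCM20a}), and checking that these conditions are compatible with the passage to the uniformly complete subcategories so that the restriction to a dual equivalence goes through.
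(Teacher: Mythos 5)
Your proposal is correct and follows essentially the same route as the paper: the paper likewise reduces the statement to Theorem~\ref{thm: Alg(H*) = Coalg(V*)} together with the isomorphisms $\Alg(\H^*)\cong\mbal^{\sf D}$ and $\Coalg(\V^*)\cong\KHF^{\sf D}$, the latter obtained by restricting $\SS$ and $\T$ exactly as you describe. Your extra observation that $\V^*(X)$ is closed in $\V(X)$ (since $\Box_\varnothing=\{\varnothing\}$ is open) is a harmless additional check not spelled out in the paper.
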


\begin{proof}
The proof is similar to that of Theorem~\ref{thm: mubal = KHF} but uses Theorem~\ref{thm: Alg(H*) = Coalg(V*)} instead of Theorem~\ref{thm: dual adjunction} and Corollary~\ref{cor: dual equivalence}(1). It also uses the isomorphism between $\Coalg(\V^*)$ and $\KHF^{\sf D}$, which is essentially the same as that between $\Coalg(\V)$ and $\KHF$. To give some detail, if $(X, \sigma) \in \Coalg(\V^*)$, then (by abusing notation) $\SS$ sends it to $(X, R_\sigma)$. Since the image of $\sigma$ is in $\V^*(X)$, the relation $R_\sigma$ is serial, so $(X, R_\sigma) \in \KHF^{\sf D}$. Conversely, if $(X, R) \in \KHF^{\sf D}$, then $R$ is serial, so $\sigma_R(x) = R[x] \ne \varnothing$ for each $x \in X$, so $\sigma_R : X \to \V^*(X)$ is continuous, and hence (again abusing notation) $\T(X,R) = (X, \sigma_R) \in \Coalg(\V^*)$.
\end{proof}

\begin{remark}
A slight change in the argument of Proposition~\ref{prop: Algu(H) versus Alg(Hu)} shows that $\Algu(\H^*)$ is isomorphic to $\Alg(\H^{*u})$, where $\H^{*u}$ is the composition $\C\Y\H^*$.
\end{remark}

\section{Connection to modal algebras and descriptive frames} \label{sec: MA and DL}

In this final section we connect our results with those of Abramsky \cite{Abr88} and Kupke, Kurz, and Venema \cite{KKV04}. We start by recalling the definition of those $A\in\bal$ that are clean as rings (see, e.g., \cite{McG06} and the references therein).

\begin{definition}
We call $A \in \bal$ {\em clean} if each $a \in A$ can be written as $a = e+v$ with $e$ an idempotent and $v$ a unit. Let $\cubal$ be the full subcategory of $\ubal$ consisting of clean rings.
\end{definition}

\begin{lemma} \label{lem: A clean implies H(A) clean}
If $A \in \cubal$, then $\Hu(A) \in \cubal$.
\end{lemma}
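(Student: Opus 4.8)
The plan is to show that for $A \in \cubal$ the ring $\Hu(A) = C(Y_{\H(A)}) = C(\V Y_A)$ is clean, by proving that the relevant Yosida space is, in the sense that clean rings among the $C(X)$ are characterized by a topological property of $X$. Recall (see, e.g., \cite{McG06}) that for $X \in \KHaus$ the ring $C(X)$ is clean if and only if $X$ has a basis of clopen sets, equivalently (since $X$ is compact Hausdorff) $X$ is a Stone space. So the statement is equivalent to: if $A \in \cubal$ then $\V(Y_A)$ is a Stone space. Now $A \in \cubal$ means $A \cong C(X)$ with $X = Y_A$ a Stone space, so what we must prove is the purely topological fact that the Vietoris space $\V(X)$ of a Stone space $X$ is again a Stone space. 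This is classical: it is part of the Vietoris functor restricting to $\Stone \to \Stone$, which is used throughout the paper.

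First I would record the reduction: by Gelfand duality (Theorem~\ref{thm: Gelfand}), $A \in \cubal$ gives $A \cong C(X)$ with $X = Y_A \in \KHaus$, and by the characterization of clean $C(X)$, $X$ is zero-dimensional, hence a Stone space. By Theorem~\ref{thm: psi_A is 1-1}, $Y_{\H(A)} \cong \V(Y_A) = \V(X)$, and $\Hu(A) = C(Y_{\H(A)}) \cong C(\V(X))$. So it suffices to show $\V(X)$ is a Stone space whenever $X$ is. Compactness and Hausdorffness of $\V(X)$ hold already for arbitrary $X \in \KHaus$ (this is the well-definedness of $\V$ on $\KHaus$, used in Section~\ref{sec : H and Vietoris}). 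For zero-dimensionality, I would exhibit a clopen subbasis: if $U \subseteq X$ is clopen, then $\Box_U$ and $\Diamond_U$ are each clopen in $\V(X)$. Indeed $\V(X) \setminus \Box_U = \Diamond_{X \setminus U}$ and $\V(X) \setminus \Diamond_U = \Box_{X\setminus U}$, and $X \setminus U$ is clopen, so each of $\Box_U, \Diamond_U$ is open with open complement. Since the clopens of $X$ form a basis (as $X$ is Stone), the sets $\Box_U \cap \Diamond_V$ with $U, V$ clopen form a subbasis for $\V(X)$ consisting of clopen sets; finite intersections of clopens are clopen, so $\V(X)$ has a clopen basis. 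Thus $\V(X)$ is a Stone space, whence $C(\V(X)) \in \cubal$, i.e. $\Hu(A) \in \cubal$.

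The only mildly delicate point is the appeal to the characterization "$C(X)$ clean $\iff$ $X$ has a clopen basis''; I would either cite this from \cite{McG06} or, since only one direction is needed, argue directly that if $X$ is a Stone space then $C(X)$ is clean: given $f \in C(X)$, choose a clopen $U$ containing $f^{-1}(0)$ on which $|f| < 1$ (possible by compactness and zero-dimensionality), set $e = \chi_U$, and check that $v = f - e$ is a unit, since $v$ vanishes nowhere ($|v| \geq 1$ on $U$ and $v = f \neq 0$ off $U$). This keeps the argument self-contained. I do not expect any real obstacle here; the proof is essentially the observation that $\V$ preserves the Stone property together with the translation back across Gelfand duality via Theorem~\ref{thm: psi_A is 1-1}.
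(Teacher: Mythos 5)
Your proof is correct and follows essentially the same route as the paper, which likewise reduces the claim via \cite[Prop.~5.20]{BMO13a} (the characterization that $A\in\cubal$ iff $Y_A$ is a Stone space) and Theorem~\ref{thm: psi_A is 1-1} to the classical fact that $\V$ preserves Stone spaces. One small correction: both directions of the characterization of clean $C(X)$ are actually used (clean $\Rightarrow$ zero-dimensional to see that $Y_A$ is Stone, and the converse to conclude that $C(\V Y_A)$ is clean), so your remark that ``only one direction is needed'' is not quite right, though your primary option of citing the full equivalence covers this.
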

\begin{proof}
By \cite[Prop.~5.20]{BMO13a}, if $A \in \cubal$, then $Y_A$ is a Stone space. Therefore, $\V(Y_A)$ is a Stone space, and hence $Y_{\Hu(A)}$ is a Stone space by Theorem~\ref{thm: psi_A is 1-1}. Thus, $\Hu(A) \in \cubal$ by \cite[Prop.~5.20]{BMO13a}.
\end{proof}

To distinguish between $\V$ on $\KHaus$ and $\Stone$, we denote the Vietoris endofunctor on $\Stone$ by $\V^{\sf S}$. By Lemma~\ref{lem: A clean implies H(A) clean}, $\Hu$ restricts to an endofunctor on $\cubal$, which we denote by $\Hc$. The following result is then an immediate consequence of Corollary~\ref{cor: dual equivalence}(1).

\begin{theorem}
There is a dual equivalence between $\Algu(\Hc)$ and $\Coalg(\V^{\sf S})$.
\end{theorem}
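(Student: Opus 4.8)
The plan is to deduce this directly from Corollary~\ref{cor: dual equivalence}(1) by restricting the dual equivalence $\F \dashv \G$ between $\Algu(\H)$ and $\Coalg(\V)$ to the relevant full subcategories. First I would identify those subcategories concretely. Since $\V^{\sf S}$ is the restriction of $\V:\KHaus\to\KHaus$ to $\Stone$ and the Vietoris space of a Stone space is again a Stone space, $\Coalg(\V^{\sf S})$ is exactly the full subcategory of $\Coalg(\V)$ on the coalgebras $(X,\sigma)$ with $X\in\Stone$. Dually, by Proposition~\ref{prop: Algu(H) versus Alg(Hu)} the category $\Algu(\H)$ is isomorphic to $\Alg(\Hu)$, and since $\Hc$ is by definition the restriction of $\Hu$ to $\cubal$, this isomorphism identifies $\Algu(\Hc)$ with the full subcategory of $\Algu(\H)$ on those $(A,\alpha)$ with $A\in\cubal$. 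Transporting $\F$ and $\G$ along this isomorphism, it then suffices to show that they restrict appropriately.

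The key external input is \cite[Prop.~5.20]{BMO13a}: for $A\in\ubal$, $A$ is clean iff $Y_A$ is a Stone space; equivalently, for $X\in\KHaus$, $X\in\Stone$ iff $C(X)\in\cubal$. (This is also what makes Lemma~\ref{lem: A clean implies H(A) clean}, and hence the very definition of $\Hc$, work.) Using it, I would check: if $(A,\alpha)\in\Algu(\Hc)$, then $A\in\cubal$, so $Y_A\in\Stone$, and hence $\F(A,\alpha)=(Y_A,\F_\alpha)\in\Coalg(\V^{\sf S})$; and if $(X,\sigma)\in\Coalg(\V^{\sf S})$, then $X\in\Stone$, so $C(X)\in\cubal$, and hence $\G(X,\sigma)=(C(X),\G_\sigma)\in\Algu(\Hc)$. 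On morphisms $\F$ and $\G$ act as $\Y$ and $\C$, which already restrict to the dual equivalence between $\cubal$ and $\Stone$, so nothing more is needed there.

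Finally I would observe that the unit $\kappa$ and counit $\xi$ of the adjunction restrict to these subcategories: for $(A,\alpha)\in\Algu(\Hc)$ the component $\kappa_{(A,\alpha)}=\zeta_A$ is an isomorphism in $\cubal$, and for $(X,\sigma)\in\Coalg(\V^{\sf S})$ the component $\xi_{(X,\sigma)}=\varepsilon_X$ is a homeomorphism in $\Stone$. Hence the restricted $\kappa$ and $\xi$ are again natural isomorphisms, so by \cite[Thm.~IV.4.1]{Mac71} the restricted functors $\F$ and $\G$ form a dual equivalence between $\Algu(\Hc)$ and $\Coalg(\V^{\sf S})$. There is no genuine difficulty here beyond this bookkeeping; the substantive point is that Lemma~\ref{lem: A clean implies H(A) clean} together with \cite[Prop.~5.20]{BMO13a} make the clean/Stone correspondence compatible with $\Hu$, so that the two full subcategories correspond to one another under the equivalence.
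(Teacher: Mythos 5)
Your proposal is correct and follows exactly the route the paper intends: the paper states this theorem as an ``immediate consequence'' of Corollary~\ref{cor: dual equivalence}(1) via Lemma~\ref{lem: A clean implies H(A) clean} and \cite[Prop.~5.20]{BMO13a}, and your argument simply fills in the bookkeeping (identifying $\Algu(\Hc)$ and $\Coalg(\V^{\sf S})$ as the corresponding full subcategories and checking that $\F$, $\G$, $\kappa$, $\xi$ restrict). No gaps.
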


We let $\H^\ba$ be the functor of \cite{KKV04} that sends $B \in \ba$ to the free boolean algebra over its underlying meet-semilattice.  It was shown in \cite[Prop.,~3.12]{KKV04} that $\Alg(\H^\ba)$ is isomorphic to the category $\ma$ of modal algebras. In parallel of $\M : \Alg(\H) \to \mbal$ and $\N : \mbal \to \Alg(\H)$, we denote the functors giving the isomorphism by $\M^\ba : \Alg(\H^\ba) \to \ma$ and $\N^\ba : \ma \to \Alg(\H^\ba)$. By \cite[Thm.~6.9]{BCM20a}, the triangle in the diagram below commutes up to natural isomorphism, where $(-)^* : \DFr \to \ma$ and $(-)_* : \ma \to \DFr$ are the functors yielding Esakia-Goldblatt duality, and the functor $\func{Id}$ sends $(A, \Box) \in \mbal$ to $(\func{Id}(A), \Box|_{\func{Id}(A)})$ (see \cite[Lem.~6.5]{BCM20a}). Therefore, there is an equivalence of categories between $\Alg(\Hc)$ and $\Alg(\H^\ba)$, where the functor $\Alg(\Hc) \to \Alg(\H^\ba)$ is the composition $\N^\ba \circ \func{Id} \circ \M$. 

\[
\begin{tikzcd}
\Alg(\Hc) \arrow[d, shift left = .5ex, "\M"] \arrow[rr, "\N^\ba \circ \func{Id} \circ \M"] && \Alg(\H^\ba) \arrow[d, shift left  = .5ex, "\M^{\ba}"] \\
\mcubal \arrow[u, shift left =.5ex, "\N"] \arrow[rr, "\func{Id}"] \arrow[dr, shift left = .5ex, "\Y"] && \ma \arrow[u, shift left = .5ex, "\N^\ba"] \arrow[dl, shift left = .5ex, "(-)_*"]  \\
& \DFr \arrow[ul, shift left = .5ex, "\C"] \arrow[ur, shift left = .5ex, "(-)^*"] &
\end{tikzcd}
\]

\def\cprime{$'$}
\providecommand{\bysame}{\leavevmode\hbox to3em{\hrulefill}\thinspace}
\providecommand{\MR}{\relax\ifhmode\unskip\space\fi MR }
\providecommand{\MRhref}[2]{%
  \href{http://www.ams.org/mathscinet-getitem?mr=#1}{#2}
}
\providecommand{\href}[2]{#2}

\end{document}